\documentclass[reqno]{amsart}
\usepackage[T1, T2A]{fontenc}
\usepackage[utf8]{inputenc} 
\usepackage{graphicx}
\usepackage{amssymb,latexsym}
\usepackage{subfigure, url}
\usepackage{tikz}
\usepackage{color}

\numberwithin{equation}{section}

\def\eps{{\varepsilon}}

\def\C{{\mathbb C}}
\def\D{{\mathbb D}}

\def\H{{\mathbb H}}
\def\N{{\mathbb N}}

\def\R{{\mathbb R}}

\def\Z{{\mathbb Z}}
\def\CP{{\mathbb C\mathbb P}}

\def\ov{\overline}

\def\zbar{\overline{z}}
\def\epsbar{\overline{\varepsilon}}
\def\sgn{{\rm sgn}}
\newcommand\Tt{{T_{\scriptscriptstyle{1\over 2}}}}
\newcommand\STt{\Sigma\circ\Tt}
\def\Re{\rm Re}
\def\Im{\rm Im}

\usepackage{cleveref}
\theoremstyle{plain}
\newtheorem{lemma}{Lemma}[section]
\newtheorem{proposition}[lemma]{Proposition}

\theoremstyle{definition}

\newtheorem{remark}[lemma]{Remark}
\theoremstyle{plain}

\newtheorem{theorem}[lemma]{Theorem}
\newtheorem{corollary}[lemma]{Corollary}

\theoremstyle{definition}
\newtheorem{definition}[lemma]{Definition}

\theoremstyle{remark}
\newtheorem{notation}[lemma]{Notation}

\title[Unfoldings of antiholomorphic parabolic point]{Generic unfolding of an antiholomorphic parabolic point of codimension $k$\footnote{The author is supported by NSERC in Canada.}
}

\author[C. Rousseau]{Christiane Rousseau}
\address{D\'epartement de math\'ematiques et de statistique, Universit\'e de Montr\'eal, C.P. 6128, Succursale Centre-ville, Montr\'eal (Qc), H3C 3J7, Canada.}
\email{christiane.rousseau@umontreal.ca}

\usepackage{xcolor}
\usepackage{xspace}

\subjclass[2020] {37F46 32H50 37F34 37F44}

\begin{document}
\date{\today}
\maketitle

\begin{abstract}
We classify generic unfoldings of germs of antiholomorphic
  diffeomorphisms with a parabolic point of codimension~$k$ (i.e.~a fixed point of multiplicity $k+1$) under conjugacy. Such generic unfoldings depend real analytically on $k$ real parameters. A preparation of the unfolding allows to identify  real analytic \emph{canonical parameters}, which are preserved by any conjugacy between two prepared generic unfoldings. A modulus of analytic classification is defined, which is an unfolding of the modulus assigned to the antiholomorphic parabolic point. Since the second  iterate of such a germ is a real unfolding of a holomorphic parabolic point, the modulus is a special form of an unfolding of the \'Ecalle-Voronin modulus  of the second  iterate of the antiholomorphic parabolic germ. We also solve the problem of the existence of an antiholomorphic square root to a germ of generic analytic unfolding of a holomorphic parabolic germ.   \end{abstract}
\smallskip
\noindent\keywords{Discrete dynamical systems,
  antiholomorphic dynamics, parabolic fixed point, 
  classification, unfoldings, modulus of analytic classification}

\section{Introduction} 

Antiholomorphic dynamics is developing in parallel with holomorphic dynamics. The development of  holomorphic dynamics has taken off from the fine study of the structure of the Mandelbrot set for quadratic polynomials by Douady and Hubbard (\cite{DH1} and \cite{DH2}). The Mandelbrot set was further generalized to multibrot sets for polynomials of higher degree. But in the cubic case the multibrot is not locally connected. 
To further investigate the cubic case, Milnor studied real cubic polynomials in 1992 (see \cite{M}). There, a prototype for the behavior in the bitransitive case was the tricorn, which is the equivalent of the Mandelbrot set for the antiholomorphic map $z\mapsto \ov{z}^2+c$. The generalization of the tricorn was the multicorn which appears for $z\mapsto \ov{z}^d+c$. This made the link between holomorphic and antiholomorphic dynamics and led to an increasing interest in the latter. 

Considering holomorphic dynamics, for instance iterations of quadratic polynomials, the interesting behavior occurs close to the boundary of the Mandelbrot set. There, periodic points with rational multipliers (also called resonant periodic points) are dense and organize the global dynamics. The local study of these periodic points sheds some light on how this dynamics is organized. 

In parallel, a whole chapter of mathematics developed around the classification problem for singularities in analytic dynamics. \'Ecalle (\cite{E}) and Voronin (\cite{V}) classified resonant fixed points of germs of $1$-dimensional analytic diffeomorphisms 
\begin{equation} f(z) = \exp\left(\frac{2\pi i p}{q}\right)z+ z^{kq+1} + O(z^{kq+2})\label{par}\end{equation}
up to conjugacy (local changes of coordinates) and derived moduli spaces for these. The moduli are constructed as follows. While a simple formal normal form exists, the formal normalizing change of coordinate generically diverges. But there exists almost unique normalizing changes of coordinates on sectors covering a punctured neighborhood of the fixed point. The modulus is given by the mismatch between these almost unique normalizing changes of coordinates. The moduli spaces are huge, namely functional spaces, thus highlighting the richness of the different geometric behaviors of these singularities. Explaining this richness came from two directions. To highlight this, let us focus on  the simplest case of a double singular point, called a codimension $1$ parabolic point ($p=q=k=1$ in \eqref{par}). The normal form in this case is the time-one map of the flow of a vector field $\frac{z^2}{1+bz}\;\frac{\partial}{\partial z} $. Since a double fixed point can be seen as the merging of two simple fixed points, it is natural to unfold the germ of analytic diffeomorphism in a family splitting the double fixed point into two simple fixed points. Two independent attempts to understand the dynamics developed in parallel. On the one hand, there were studies in the parameter directions in which the simple fixed points where linearizable (see for instance \cite{Ma} and \cite{Gl}). In the neighborhood of each fixed point the diffeomorphism is analytically conjugate to the normal form given by the  time-one map of the flow of a vector field $\frac{z^2-\eps}{1+b(\eps)z}\;\frac{\partial}{\partial z} $. But, generically the two normalizations do not match. The mismatch is a modulus of the unfolding for these parameter values and the limit of this mismatch when the fixed points merge together is the \'Ecalle-Voronin modulus. This approach could not work in the parameter directions where either at least one simple fixed point is not normalizable or the domains of normalizations have void intersections. A way through came from a visionary idea of Douady, namely to  normalize the system in some domains that contains sectors at the two fixed points and whose union cover a punctured neighborhood of the two fixed points. If the domains are appropriately chosen, then the normalizations are almost unique, thus allowing to unfold the moduli.  This approach was  first proposed in the thesis of Lavaurs (\cite{L}) and normalizing coordinates were constructed by Shishikura \cite{S}. The method could be generalized to cover all directions in parameter space and led to constructions of moduli for germs of unfoldings of parabolic points (\cite{MRR} for the generic case and \cite{Ri} for the general case). The generalization involves taking domains spiraling when approaching the fixed points. Furthermore, the moduli space was identified in \cite{CR}.

Generalizations to parabolic fixed points of multiplicity $k+1$ (i.e. codimension $k$) were made possible again through the visionary ideas of Douady, who sensed that the structure of domains on which to perform the normalizations was linked to the dynamics of polynomial vector fields $P(z) \frac{\partial}{\partial z}$ on $\C$. In that case a full generic unfolding involves $k$ independent parameters. The first step performed by Oudkerk (\cite{O}) covered some directions in parameter space. A few years later, the systematic study of the generic polynomial vector fields was finalized in \cite{DES05}. Using these results, the methods of \cite{MRR} can be generalized to cover the full parameter space, Again, almost unique normalizations exist on domains which have spiraling sectors attached to two fixed points. These can be used to define a modulus of analytic classification for generic germs of unfoldings of parabolic fixed points of codimension $k$ (\cite{R15}). (Note that \cite{Ri} treats the case of $1$-parameter unfoldings.) Identifying the moduli space is still open for $k>1$. 

A similar program can be carried for multiple fixed points (also called parabolic points) of germs of antiholomorphic diffeomorphisms \begin{equation} f(\zbar) = \zbar\pm \frac12 \zbar^{kq+1} + O(\zbar^{kq+2})\label{antipar}\end{equation} and their unfoldings. The analytic classification of such germs was done in \cite{GR21}.
The similarities with the holomorphic case come from the fact that the second iterate of an antiholomorphic map is holomorphic, and hence results on holomorphic parabolic points are relevant. The differences are at the parameter level. The holomorphic or antiholomorphic dependence of  
 an antihomorphic diffeomorphism on parameters is not preserved by iteration. This comes from the fact that the condition for a multiple fixed point to have multiplicity $k+1$ has real codimension $k$ and a generic unfolding depends  real-analytically of $k$ real parameters. The classification problem of codimension $1$ unfoldings (parabolic points of multiplicity $2$) has been completely studied in \cite{GR22}, including identifying the moduli space.
 
 In this paper we consider the higher codimension $k$ case. Usually, a conjugacy of parametrized families of dynamical systems involves a change of parameter, which governs which member of the first family is conjugate to which member of the second family. In a generic holomorphic unfolding of a parabolic germ, there is a choice of a canonical multi-parameter $\eps=(\eps_0, \dots, \eps_{k-1})$, which is unique up to the action of the rotation group of order $k$. 
 A modulus of analytic classification for such a generic unfolding $g_\eps$ is given by a measure of how much $g_\eps$ differs from its formal normal form given by the time one map $v_\eps^1$ of a vector field \begin{equation}v_\eps=\frac{z^{k+1} + \eps_{k-1}z^{k-1} +\dots+ \eps_1 z + \eps_0}{1+b(\eps)z^k}\,\frac{\partial} {\partial z}.\label{intro:v}\end{equation}  The normal form is invariant under $(z,\eps_0, \dots, \eps_{k-1}) \mapsto \left(\tau z, \tau\eps_0, \dots, \tau^{-(k-2)}\eps_{k-1}\right)$, with $\tau^k=1$. And, in the particular case where $\ov{b(\eps)}=b(\epsbar)$, then for real $\eps$ there are $k$ invariant lines under the dynamics, and each choice of canonical parameter is associated to an invariant line. 
 
 In the antiholomorphic case, we consider generic unfoldings depending real-analytically on $k$ real parameters. We show that for $k$ odd, there is a unique choice of canonical parameters. For $k$ even, the only freedom is the action on parameters of $z\mapsto -z$. Hence (up to conjugating with $z\mapsto -z$ when $k$ is even) any conjugacy between two unfoldings must preserve the canonical parameters. Moreover, a change of coordinate and move to the canonical parameters \emph{prepares} the family to a form $f_\eps$ naturally compared to a formal normal form, where $\eps=(\eps_0, \dots \eps_{k-1})$ is a real-analytic multi-parameter.  This normal form is given by $\sigma\circ v_\eps^{\frac12}$, where $v_\eps$ is defined in \eqref{intro:v} and $\sigma$ is the complex conjugation, and $b(\eps)$ is always real. Note that this normal form has no rotational symmetry (except under $z\mapsto -z$ when $k$ is even). Moreover,  the real axis is the only invariant line and a symmetry axis for \eqref{intro:v}.
 
In practice, to derive a modulus it is useful to extend $\eps$ to $\C^k$ and $f_\eps$ antiholomorphically in the parameter. Then the diffeomorphism $g_\eps= f_{\epsbar}\circ f_\eps$ is a holomorphic unfolding of a holomorphic parabolic point of codimension $k$ depending holomorphically on the complex parameter $\eps\in\C^k$. A modulus of analytic classification for $g_\eps$ is given by a measure of how much $g_\eps$ differs from its formal normal form. As a result, a modulus in the antiholomorphic case is obtained from the fact that two prepared families $f_{1,\eps}$ and $f_{2,\eps}$ are analytically conjugate under a conjugacy tangent to the identity if and only if their associated \lq\lq squares\rq\rq\ defined by $g_{j,\eps}= f_{j,\epsbar}\circ f_{j,\eps}$ are holomorphically conjugate under a conjugacy tangent to the identity.

We then consider several applications. As a first one, we derive the necessary and sufficient condition for the existence of an invariant real analytic curve for real values of the parameters. Of course, this curve can be rectified to the real axis. In the second application, we consider the necessary and sufficient conditions under which a germ of generic unfolding of holomorphic parabolic germ $g_\eps$ has an \lq\lq antiholomorphic square root\rq\rq, i.e. can be decomposed as $g_\eps= f_{\epsbar}\circ f_\eps$, with $f_\eps$ antiholomorphic. These conditions are just the unfoldings of the corresponding conditions for the germ at $\eps=0$ given in \cite{GR21} and consist in some symmetry property of the modulus. As a particular case, we show that the quadratic family $g_\eps(z)= z + z^2-\eps$ has no antiholomorphic square root for small $\eps$. 

As a last application, we consider the map $\zbar^d+c$, for $c\in C$, and the associated multicorn for an integer $d\geq2$. It is known that there are exactly $d+1$ values of $c$ for which there exists a parabolic fixed point of codimension greater than $1$ (i.e. multiplicity greater than $2$). We show that these points have exact codimension 2 and that the family $\zbar^d+c$ is a generic unfolding of these points. 

\section{Preparation of the family} 

\subsection{Generalities and notations} 

\begin{notation}\label{notation} \begin{enumerate}
\item We denote by $T_a$ the translation by $a\in \C$. 
\item We denote by  $\sigma$ the complex conjugation $z\mapsto \zbar$.
\item We denote by $\D_r$ the disk of radius $r$.
\end{enumerate}
\end{notation}

\begin{definition} A map $f$ defined on a domain of $\C$ is \emph{antiholomorphic} if $\frac{\partial f}{\partial z}=0$, which is equivalent to $\sigma\circ f$ being holomorphic. 
\end{definition}

\begin{remark} Let $z_0$ be a fixed point of a antiholomorphic map $f$. Then only $|f'(z_0)|$ is an analytic invariant under analytic changes of coordinates. \end{remark}

\begin{definition} 
A multiple fixed point of finite multiplicity of a germ of holomorphic or  antiholomorphic diffeomorphism  is called \emph{parabolic}. 
The germ is said to be holomorphically parabolic or antiholomorphically parabolic.  \end{definition}

\begin{proposition}\cite{GR21} Let $z_0$ be a parabolic fixed point of a germ of antiholomorphic diffeomorphism. Then there exists a holomorphic change of coordinate in the neighborhood of $z_0$ bringing the diffeomorphism to the form $$
f_0(z) =  \begin{cases} \ov{z} +\frac12 \ov{z}^{k+1} +\left(\frac{k+1}8-\frac{b}2\right)\ov{z}^{2k+1}+ o(\ov{z}^{2k+1}), &k\:\text{odd},\\
\ov{z} \pm\frac12 \ov{z}^{k+1} + \left(\frac{k+1}8-\frac{b}2\right)\ov{z}^{2k+1}+ o(\ov{z}^{2k+1}), &k\:\text{even}, \end{cases}$$
with $b\in \R$. The integer $k>1$ is called the \emph{codimension}, and the number $b$ is the \emph{formal invariant}. The same $k$ and $b$ are the codimension and formal invariant of the holomorphic parabolic germ $g_0=f_0\circ f_0$.
\end{proposition}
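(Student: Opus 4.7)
The plan is purely algebraic: since only the terms up to $\bar z^{2k+1}$ are specified in the claimed normal form (with an $o(\bar z^{2k+1})$ remainder), I would take every conjugating change of coordinate to be a polynomial of degree $\le 2k+1$, so no convergence question arises. First, translate so $z_0 = 0$ and write $f_0(z)=\sum_{j\ge 1}c_j\bar z^j$. Parabolicity of $z_0$ forces $g_0'(0)=|c_1|^2=1$, so $|c_1|=1$; a linear change $z\mapsto\alpha z$ sends $c_j$ to $(\bar\alpha^j/\alpha)c_j$, and choosing $\arg\alpha=\tfrac12\arg c_1$ normalizes $c_1=1$, leaving the residual scaling $\alpha\in\R^\times$.

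Next, I would apply successively the polynomial conjugations $z=w+\phi_j w^j$ for $j=2,3,\ldots,2k+1$. A direct computation shows each such change modifies $c_j$ by $\bar\phi_j-\phi_j=-2i\,\Im(\phi_j)$ while leaving all $c_i$ with $i<j$ untouched, so $c_j$ can always be made real. Expanding $g_0=f_0\circ f_0$ stage by stage, the $z^j$-coefficient of $g_0$ equals $2\,\Re(c_j)$ plus cross contributions from previously normalized $c_i$. For $2\le j\le k$ the hypothesis that $g_0$ has multiplicity exactly $k+1$ at $0$ forces $\Re(c_j)=0$, hence $c_j=0$. At $j=k+1$ the real part of $c_{k+1}$ is the first nonzero invariant; the residual scaling $\alpha\in\R^\times$ acts on $c_{k+1}$ by multiplication by $\alpha^k$, so for $k$ odd $\alpha^k$ covers $\R^\times$ and I can normalize $c_{k+1}=\tfrac12$, while for $k$ even $\alpha^k>0$ and only $|c_{k+1}|=\tfrac12$ can be fixed, leaving the sign $\pm$ as a genuine invariant (the $\pm$ in the statement).

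For $k+2\le j\le 2k$ I would continue the polynomial normalization to set $c_j=0$, matching the fact that $v_0^1=z+z^{k+1}+(\tfrac{k+1}{2}-b)z^{2k+1}+O(z^{2k+2})$ has no intermediate terms; the cross contributions from $c_{k+1}$ at these orders can be absorbed by appropriate inductive choices of the lower-order $\phi_l$. Finally, an explicit expansion with $f_0(z)=\bar z\pm\tfrac12\bar z^{k+1}+c_{2k+1}\bar z^{2k+1}+O(\bar z^{2k+2})$ yields
\[g_0(z)=z+2c_{k+1}\,z^{k+1}+\bigl(2c_{2k+1}+(k+1)c_{k+1}^2\bigr)z^{2k+1}+O(z^{2k+2}).\]
Using $c_{k+1}^2=1/4$ and matching the $z^{2k+1}$-coefficient against $\tfrac{k+1}{2}-b$ gives $c_{2k+1}=\tfrac{k+1}{8}-\tfrac{b}{2}$; since $c_{2k+1}$ is real by construction, this forces $b\in\R$ and identifies the codimension and formal invariant of $g_0$ with those of $f_0$.

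The hardest part will be the combinatorial bookkeeping at the intermediate orders $k+2\le j\le 2k$: one has to verify that the cross terms produced by $c_{k+1}\bar z^{k+1}$ contaminating the $z^j$-coefficient of $g_0$ can be canceled by the inductive choice of the polynomial conjugations $\phi_l$, so that $c_j=0$ can be achieved simultaneously with all previously fixed normalizations. This is a routine but delicate inductive verification, and it is what converts the loose statement \emph{``$v_0^1$ has no intermediate terms''} into a clean normalization of $f_0$ itself.
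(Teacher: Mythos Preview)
The paper does not prove this proposition; it is quoted from \cite{GR21} and stated without argument. So there is no paper proof to compare against, and your proposal is being judged on its own merits.

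Your outline is sound and the endpoints are computed correctly: the linear normalization, the mechanism $c_j\mapsto c_j+\bar\phi_j-\phi_j=c_j-2i\,\Im\phi_j$, the use of the multiplicity hypothesis to force $\Re c_j=0$ for $2\le j\le k$, the scaling dichotomy at $j=k+1$, and the final identification of $c_{2k+1}$ with $\tfrac{k+1}{8}-\tfrac{b}{2}$ are all right. One small slip: the direction of the transformation of $c_j$ under $z\mapsto\alpha z$ is the reciprocal of what you wrote, but this is harmless since $\alpha^{k}$ and $\alpha^{-k}$ have the same range over $\R^\times$.

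The place that needs sharpening is exactly where you flagged it, orders $k+2\le j\le 2k$. Your phrasing ``cross contributions from $c_{k+1}$ can be absorbed by inductive choices of the lower-order $\phi_l$'' is correct but hides the actual mechanism, and as written one could read it as wishful. The point to make explicit is this: for $2\le l\le k$ the conjugation $z=w+\phi_l w^l$ changes $c_l$ only through $\Im\phi_l$, so $\Re\phi_l$ is a \emph{free} real parameter at that stage. A direct expansion shows that this same conjugation shifts $c_{k+l}$ by $c_{k+1}\bigl((k+1)\bar\phi_l-l\,\phi_l\bigr)$; for $\phi_l\in\R$ this is the nonzero real quantity $(k+1-l)\,c_{k+1}\,\phi_l$. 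Hence $\Re\phi_l$ controls $\Re c_{k+l}$, while $\Im\phi_{k+l}$ controls $\Im c_{k+l}$, and together they kill $c_{k+l}$ without disturbing the earlier normalizations. Once you say this, the ``routine but delicate'' step becomes genuinely routine: the linearized system in the $\phi_l$ is lower-triangular and invertible, so the full polynomial conjugation exists.
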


\begin{remark} Note that when $k$ is even, if we have the minus sign in $f_0$, then we have the plus sign in $f_0^{-1}$. Hence we limit ourselves to the plus sign. \end{remark}

In this paper we consider germs of families of antiholomorphic diffeomorphisms depending real-analytically on $k$ real parameters and unfolding a parabolic germ of the form
\begin{equation}f_0(z) =  \ov{z} +\frac12 \ov{z}^{k+1} +\left(\frac{k+1}8-\frac{b}2\right)\ov{z}^{2k+1}+ o(\ov{z}^{2k+1}). \label{germ_0}\end{equation} 

The germs of families have the form
\begin{equation} f_\eta(z) = \ov{z} +\sum_{j=0}^{k+1} a_j(\eta)\ov{z}^j+\frac12 \ov{z}^{k+1} + o(\ov{z}^{k+1}),\label{gen_unfolding}\end{equation}
with $a_j(0)=0$ and $\eta= (\eta_0, \dots, \eta_{k-1})\in (\R^k,0)$. 

\begin{definition} The family \eqref{gen_unfolding} is \emph{generic} if the change of parameters $\eta\mapsto ({\Re}(a_0), \dots,  {\Re}(a_{k-1}))$ is invertible. \end{definition}

The second iterate $g_\eta=f_\eta\circ f_\eta$ is  an unfolding of the holomorphic parabolic germ depending on $k$ real parameters, but it will be useful to complexify the parameters. The following lemma is obvious.  

\begin{lemma}\label{lem:complexify} Let us complexify the parameters $\eta$ in $f_\eta$ in such a way that $f_\eta$ depends antiholomorphically on $\eta$ (i.e. $\frac{\partial f_\eta}{\partial \eta_j}=0$, $j=0, \dots, k-1$). Then the map $g_\eta$ defined for complex $\eta$ by
\begin{equation} g_\eta = f_{\ov{\eta}}\circ f_\eta. \label{def:g}\end{equation}
is a generic full unfolding of $g_0$ depending holomorphically on $\eta\in (\C^k,0)$. \end{lemma}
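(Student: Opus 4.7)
The plan is to repackage the antiholomorphic dependence of $f_\eta$ in both $z$ and $\eta$ as a single holomorphic power series in bar-variables, then exploit $\overline{\bar\eta}=\eta$ to exhibit $g_\eta$ as a composition of two maps each holomorphic in $(z,\eta)$, and finally read off the linear part of $\eta\mapsto(\text{coefficients of }g_\eta)$ to transfer genericity from $f_\eta$.

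First I would encode the data. The antiholomorphic dependence on $\eta$ uniquely extends each real-analytic coefficient $a_j(\eta)$ to the form $A_j(\bar\eta)$ with $A_j$ holomorphic and $A_j(0)=0$, so \eqref{gen_unfolding} can be rewritten as $f_\eta(z)=\Phi(\bar z,\bar\eta)$ for a single holomorphic power series
$$\Phi(u,v)=u+\sum_{j=0}^{k+1}A_j(v)u^j+\tfrac12 u^{k+1}+o(u^{k+1})$$
on $(\C\times\C^k,0)$. Substituting $\eta\mapsto\bar\eta$ and using $\overline{\bar\eta}=\eta$ gives $f_{\bar\eta}(w)=\Phi(\bar w,\eta)$, so
$$g_\eta(z)=\Phi\bigl(\overline{\Phi(\bar z,\bar\eta)},\,\eta\bigr)=\Phi\bigl(\overline{\Phi}(z,\eta),\,\eta\bigr),$$
where $\overline{\Phi}$ denotes the series with all coefficients complex-conjugated. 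Both $\Phi$ and $\overline{\Phi}$ are holomorphic in their (unconjugated) arguments, so $g_\eta(z)$ is holomorphic in $(z,\eta)$, and at $\eta=0$ it reduces to $f_0\circ f_0=g_0$.

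Next I would verify that this holomorphic $k$-parameter deformation is generic in the sense of the holomorphic theory. Writing $W=\overline{\Phi}(z,\eta)=z+\sum_j\overline{A_j}(\eta)z^j+\tfrac12 z^{k+1}+\cdots$ and substituting into $\Phi(W,\eta)$, a short computation to first order in $\eta$ gives
$$g_\eta(z)-z=\sum_{j=0}^{k-1}\bigl(A_j(\eta)+\overline{A_j}(\eta)\bigr)z^j+z^{k+1}+O(\eta^2)+O(\eta z^k)+O(z^{k+2}).$$
The Jacobian of $\eta\mapsto(A_j+\overline{A_j})_{0\le j\le k-1}$ at $\eta=0$ has entries $\partial_{\eta_i}(A_j+\overline{A_j})|_0=2\,\mathrm{Re}(\partial_{\eta_i}A_j|_0)$, which for real $\eta$ equals twice $\partial_{\eta_i}\mathrm{Re}\,a_j|_0$. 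By the genericity hypothesis on $f_\eta$ this real $k\times k$ matrix is invertible; being a real invertible matrix it is also invertible over $\C$, so $g_\eta$ is a generic full holomorphic unfolding of the codimension-$k$ parabolic germ $g_0$.

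I do not anticipate any serious obstacle. The only delicate point is the bookkeeping between $a_j(\eta)$, its antiholomorphic extension $A_j(\bar\eta)$, and the conjugated series $\overline{A_j}(\eta)=\overline{A_j(\bar\eta)}$ that appears in $\overline{f_\eta(z)}$. Once this identification is in place, both holomorphy of $g_\eta$ in $(z,\eta)$ and the transfer of genericity are immediate.
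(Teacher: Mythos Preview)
Your proposal is correct and follows essentially the same approach as the paper's proof. The paper is simply more terse: it asserts holomorphy of $g_\eta$ in $\eta$ without justification and writes down the expansion $g_\eta(z)=z+\sum_j(2\operatorname{Re}(a_j(\eta))+o(\eta))z^j+z^{k+1}(1+O(\eta))+o(z^{k+1})$, from which genericity is immediate; your encoding $f_\eta(z)=\Phi(\bar z,\bar\eta)$ and the decomposition $g_\eta=\Phi(\overline{\Phi}(z,\eta),\eta)$ merely spell out the holomorphy claim, and your coefficient $A_j(\eta)+\overline{A_j}(\eta)$ is exactly the paper's $2\operatorname{Re}(a_j(\eta))$ after identifying $a_j(\eta)=A_j(\bar\eta)$.
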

\begin{proof} Note that $g_\eta$ depends holomorphically from $\eta$. 
Moreover the $a_j$ are antiholomorphic in $\eta$, i.e. functions $a_j(\ov{\eta})$. Then $$g_\eta(z) = z +\sum_{j=0}^{k+1}\left(2{\rm Re}(a_j (\eta)) +o(\eta))\right)z^j+z^{k+1}(1+ O(\eta)) + o(z^{k+1}),$$ from which the genericity follows.\end{proof}

But for the time being, we continue with $\eta\in (\R^k,0)$.

\begin{lemma}\label{lemma:multipliers} Let $f$ be an antiholomorphic diffeomorphism, and $g=f\circ f$ be its second iterate. If $z_0$ is a fixed point of $f$ then $g'(z_0)\in\R_{\geq0}$. 
If $\{z_1,z_2\}$ is a periodic orbit of period $2$ of $f$, then $g'(z_1)=\ov{g'(z_2)}$. \end{lemma}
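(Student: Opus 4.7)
The plan is to reduce everything to the ordinary holomorphic chain rule by exploiting the structure of antiholomorphic maps. Since $\sigma\circ f$ is holomorphic, there exists a germ $F$ of holomorphic function at $\overline{z_0}$ such that $f(z)=F(\overline{z})$. The map $\overline{F}\colon z\mapsto \overline{F(\overline{z})}$ is then also holomorphic: if $F(w)=\sum a_n w^n$, then $\overline{F}(z)=\sum\overline{a_n}\,z^n$, and differentiating term by term gives the identity
\[
\overline{F}'(z)=\overline{F'(\overline{z})},
\]
which I will use repeatedly.

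First I would rewrite the second iterate as a composition of two holomorphic germs. Compute
\[
g(z)=f\bigl(f(z)\bigr)=F\bigl(\overline{F(\overline{z})}\bigr)=F\bigl(\overline{F}(z)\bigr),
\]
so $g$ really is holomorphic (as it had to be, being the composition of two antiholomorphic maps) and the standard chain rule gives
\[
g'(z)=F'\bigl(\overline{F}(z)\bigr)\cdot\overline{F}'(z)=F'\bigl(\overline{F}(z)\bigr)\cdot\overline{F'(\overline{z})}.
\]

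Then I would substitute the dynamical hypotheses. For a fixed point $z_0$ of $f$, the equation $F(\overline{z_0})=z_0$ is equivalent (by conjugation) to $\overline{F}(z_0)=\overline{z_0}$, so plugging into the formula above yields
\[
g'(z_0)=F'(\overline{z_0})\cdot\overline{F'(\overline{z_0})}=\bigl|F'(\overline{z_0})\bigr|^2\in\R_{\geq 0},
\]
which is the first claim. For a period-$2$ orbit $\{z_1,z_2\}$, the relations $F(\overline{z_1})=z_2$ and $F(\overline{z_2})=z_1$ translate to $\overline{F}(z_1)=\overline{z_2}$ and $\overline{F}(z_2)=\overline{z_1}$, hence
\[
g'(z_1)=F'(\overline{z_2})\cdot\overline{F'(\overline{z_1})},\qquad g'(z_2)=F'(\overline{z_1})\cdot\overline{F'(\overline{z_2})},
\]
and taking the complex conjugate of the second expression immediately recovers the first, giving $g'(z_1)=\overline{g'(z_2)}$.

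The argument is entirely elementary; the only thing requiring care is the bookkeeping of conjugations, in particular distinguishing $F'(\overline{z})$ (the holomorphic derivative of $F$ evaluated at $\overline{z}$) from $\overline{F}'(z)$ (the derivative of the holomorphic germ $\overline{F}$ at $z$). Once the identity $\overline{F}'(z)=\overline{F'(\overline{z})}$ is in hand, there is no genuine obstacle.
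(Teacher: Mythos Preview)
Your proof is correct and follows the same chain-rule computation as the paper's own argument; the paper simply writes $f'(z)$ for what you call $F'(\ov z)$ and records the identities $g'(z_0)=f'(z_0)\ov{f'(z_0)}$, $g'(z_1)=f'(z_2)\ov{f'(z_1)}$, $g'(z_2)=f'(z_1)\ov{f'(z_2)}$ without further comment. Your version is more explicit about the conjugation bookkeeping, but the underlying idea is identical.
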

\begin{proof} We have $g'(z_0)= f'(z_0)\ov{f'(z_0)}$. Also  $g'(z_1) = f'(z_2)\ov{f'(z_1)}$ and $g'(z_2) = f'(z_1)\ov{f'(z_2)}$, from which the result follows. 
\end{proof}

\begin{corollary} Let $f_\eta$ be an unfolding of an antiholomorphic parabolic germ and let $g_\eta=f_{\ov{\eta}}\circ f_\eta$ be its second iterate.
Then its formal invariant $b(\eta)$ commutes with $\sigma$. \end{corollary}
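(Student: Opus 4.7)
The plan is to show that $b(\eta)\in\R$ for real $\eta$ and then extend by Schwarz reflection to obtain the stated commutation $\ov{b(\eta)}=b(\ov\eta)$. By Lemma~\ref{lem:complexify}, the holomorphic unfolding $g_\eta=f_{\ov\eta}\circ f_\eta$ is well-defined and depends holomorphically on $\eta$ on a full complex neighborhood of $0$ in $\C^k$, and it is a generic full unfolding of the codimension-$k$ parabolic germ $g_0$. Consequently its formal invariant $b(\eta)$---the coefficient in the formal normal vector field $v_\eta=\frac{P(z,\eta)}{1+b(\eta)z^k}\,\frac{\partial}{\partial z}$ of \eqref{intro:v}---is itself a holomorphic function of $\eta$ near $0$. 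Once reality on $\R^k$ is established, Schwarz reflection yields the corollary.

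To prove reality for real $\eta$, I would invoke the standard residue description of $b(\eta)$ in terms of the multipliers of $g_\eta$ at its fixed points. For generic $\eta$ the map $g_\eta$ has $k+1$ simple fixed points $z_0,\dots,z_k$, coinciding with the zeros of $P(\cdot,\eta)$; the multiplier at $z_i$ is $\mu_i=\exp(v_\eta'(z_i))$ with $v_\eta'(z_i)=\frac{P'(z_i,\eta)}{1+b(\eta)z_i^k}$. Partial-fraction decomposition of $\frac{1+b(\eta)z^k}{P(z,\eta)}$ combined with the residue at $\infty$ (which equals $-b(\eta)$) gives
\[
b(\eta)\;=\;\sum_{i=0}^{k}\frac{1+b(\eta)z_i^k}{P'(z_i,\eta)}\;=\;\sum_{i=0}^{k}\frac{1}{\log\mu_i},
\]
where $\log\mu_i$ denotes the branch depending continuously on $\eta$ with $\log\mu_i\to 0$ as $\eta\to 0$.

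With this formula, the conclusion comes directly from Lemma~\ref{lemma:multipliers}. For real $\eta$, every fixed point $z_i$ of $g_\eta=f_\eta\circ f_\eta$ is either a fixed point of $f_\eta$---in which case $\mu_i\in\R_{\geq 0}$ and $\log\mu_i\in\R$---or belongs to a $2$-cycle $\{z_i,z_j\}$ of $f_\eta$, in which case $\mu_i=\ov{\mu_j}$ and, by continuity of the chosen branches across $\mu=1$, $\log\mu_i=\ov{\log\mu_j}$. Hence $\{1/\log\mu_i\}_i$ is closed under complex conjugation, its sum is real, and so is $b(\eta)$. By analyticity this extends to the real parameters at which several fixed points of $g_\eta$ coalesce, and then Schwarz reflection completes the argument.

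The step requiring most care will be the residue formula and the coherent choice of logarithmic branches: both are classical in the theory of generic unfoldings of holomorphic parabolic germs (cf.~\cite{R15,MRR}), but one must verify that the branch selection is uniform on the real neighborhood and behaves correctly across the coalescence loci where fixed points merge. Everything else is a clean bookkeeping exercise using Lemma~\ref{lemma:multipliers} and Schwarz reflection.
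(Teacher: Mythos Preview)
Your proof is correct and follows essentially the same approach as the paper: invoke the residue formula $b(\eta)=\sum_{s=0}^{k}\frac{1}{\log g_\eta'(z_s)}$ (citing \cite{R15}) and apply Lemma~\ref{lemma:multipliers} to conclude reality for real $\eta$. You simply spell out in more detail the branch-selection issues and the Schwarz-reflection extension to complex $\eta$, which the paper leaves implicit.
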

\begin{proof} Let $z_0, \dots, z_k$ be the fixed points and periodic points of period $2$ of $f_\eta$ merging to the origin for $\eta=0$: these are the fixed points of $g_\eta$. It is known (see for instance \cite{R15}) that $b(\eta)= \sum_{s=0}^{k} \frac1{\log g_\eta'(z_s)}$, which is real for real $\eta$ by Lemma~\ref{lemma:multipliers}. \end{proof}

We want to classify germs of unfoldings of antiholomorphic parabolic germs under conjugacy by \emph{mix analytic} fibered changes of coordinate and parameters.

\begin{definition} A change of coordinate and parameter, $(z_1,\eta)\mapsto (z_2, \eps)=\left(H(z_1,\eta), \phi(\eta)\right)$, is \emph{mix analytic} if 
\begin{itemize}
\item it is a diffeormorphism defined on a neighborhood $\D_r\times \prod_{\ell=0}^{k-1}(-\delta_\ell,\delta_\ell)$ of $0\in \C\times \R^k$, where $\D_r$ is the disk of radius $r$; 
\item $\phi$ depends real-analytically of $\eta$; 
\item $H$ depends holomorphically on $z_1$ and real-analytically on $\eta$. \end{itemize}\end{definition}

\begin{definition} Two germs $f_{1,\eta}$ and $f_{2,\eps}$ of unfoldings of antiholomorphic parabolic germs are \emph{conjugate} if there exists a mix analytic change of coordinate and parameters $(z_1,\eta)\mapsto (z_2, \eps)=\left(H(z_1,\eta), \phi(\eta)\right)$ defined on some $R=\D_r\times \prod_{\ell=0}^{k-1}(-\delta_\ell,\delta_\ell)$ such that for all $(z_1,\eta)\in R$
$$H(f_{1,\eta}(z_1),\eta)=f_{2,\phi(\eta)}(H(z_1,\eta)).$$
\end{definition}

\subsection{Preparing the family}

\begin{theorem}\label{thm_prepared} We consider a germ of generic $k$-parameter family unfolding an antiholomorphic parabolic germ of the form \eqref{gen_unfolding}. There exists a mix analytic (fibered) change of coordinate and parameters $(z,\eta)\mapsto (Z,\eps)$ transforming \eqref{gen_unfolding} to 
$$F_\eps(Z)=\ov{Z}+ P_\eps(\ov{Z})\left(\frac12+Q_\eps(\ov{Z})+P_\eps(\ov{Z})R_\eps(\ov{Z})\right),$$
where \begin{itemize}
\item $P_\eps(\ov{Z})= \ov{Z}^{k+1} +\sum_{j=0}^{k-1} \eps_j\ov{Z}^j$ and $Q_\eps$ is a polynomial of degree at most $k$ with real analytic coefficients in $\eps$;
\item if $Z_1, \dots, Z_{k+1}$ are the fixed points and periodic points of period 2 of $F_\eps$, i.e. the fixed points of $G_\eps=F_\eps^{\circ 2}$, then
$b(\eps):=\sum_{s=1}^{k+1}  \frac1{\log G_\eps'(Z_s)}$ is real analytic with real values;
\item if $v_\eps= \frac{P_\eps(Z)}{1+b(\eps)z^k}\,\frac{\partial}{\partial z}$, then $\log F_\eps'(Z_s)=\frac12\ov{v_\eps'(Z_s)}$ for $s=1, \dots, k+1$. 
\end{itemize}
\end{theorem}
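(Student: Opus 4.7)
The plan is to combine a parametrized Weierstrass preparation of $f_\eta$ in the variable $\overline{z}$ with the existing preparation theorem for holomorphic codimension-$k$ unfoldings applied to the second iterate $g_\eta$, then exploit the antiholomorphic structure to force the canonical parameters to be real.

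First I would apply a parametrized Weierstrass preparation to $f_\eta(z)-\overline{z}$, treated as a germ holomorphic in $\overline{z}$ and real-analytic in $\eta$, with a zero of order $k+1$ in $\overline{z}$ at $\eta=0$. This yields
\[
f_\eta(z)-\overline{z}=\widehat P_\eta(\overline{z})\,\widehat U_\eta(\overline{z}),
\]
where $\widehat P_\eta$ is a monic polynomial of degree $k+1$ in $\overline{z}$ with real-analytic (complex-valued) coefficients vanishing at $\eta=0$, and $\widehat U_\eta$ is a holomorphic unit with $\widehat U_0(0)=\tfrac12$. A mix-analytic complex translation $z=Z-d(\eta)$, with $d$ real-analytic and $d(0)=0$, then removes the $\overline{Z}^k$ coefficient, producing a Tschirnhaus-form polynomial $\widetilde P_\eta(\overline{Z})=\overline{Z}^{k+1}+\sum_{j=0}^{k-1}c_j(\eta)\overline{Z}^j$.

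Next I identify the canonical real parameters via the holomorphic second iterate. By \cref{lem:complexify}, $g_\eta=f_\eta\circ f_\eta$ is a generic holomorphic unfolding of the codimension-$k$ parabolic $g_0$, so the preparation theorem of \cite{R15} produces a mix-analytic coordinate change and reparametrization $\eps=\phi(\eta)$ bringing $g_\eta$ to
\[
G_\eps(Z)=Z+P_\eps(Z)\,U_\eps(Z),\qquad P_\eps(Z)=Z^{k+1}+\sum_{j=0}^{k-1}\eps_j Z^j,
\]
with canonical parameters defined up to the $k$-fold rotational symmetry. I would then show that for real $\eta$ there exists a choice of $\eps(\eta)$ that is real. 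The argument uses the antiholomorphic structure: the $k+1$ fixed points $Z_s$ of $G_\eta$ are permuted by $F_\eta$ as an involution whose eigenvalue behavior is constrained by \cref{lemma:multipliers} (real eigenvalues at $F$-fixed points, conjugate eigenvalues at period-2 pairs). Inside the holomorphic normal coordinates this involution, after fixing the rotational ambiguity, coincides with complex conjugation on $\{Z_1,\dots,Z_{k+1}\}$, which places the $Z_s$ in configurations symmetric under conjugation. Their elementary symmetric functions (which up to sign are the $\eps_j$) are therefore real, and the same mix-analytic coordinate change brings $f_\eta$ itself to antiholomorphic prepared form with real-coefficient polynomial $P_\eps$.

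Finally I perform Euclidean division in $\overline{Z}$ of the unit factor by the real monic polynomial $P_\eps$, which writes $\widehat U_\eps(\overline{Z})=\tfrac12+Q_\eps(\overline{Z})+P_\eps(\overline{Z})R_\eps(\overline{Z})$ with $Q_\eps$ of degree at most $k$ and vanishing constant term. The reality of $b(\eps)$ follows from the corollary to \cref{lemma:multipliers}. For the identity $\log F_\eps'(Z_s)=\tfrac12\overline{v_\eps'(Z_s)}$, at a real fixed point $Z_s$ of $F_\eps$ direct computation from the prepared form yields $F_\eps'(Z_s)=1+P_\eps'(Z_s)(\tfrac12+Q_\eps(Z_s))$, and the chain rule $G_\eps'(Z_s)=|F_\eps'(Z_s)|^2$ combined with the holomorphic normal form identity $\log G_\eps'(Z_s)=v_\eps'(Z_s)$ forces the required relation; a parallel computation at conjugate pairs of $F$-period-2 points completes the check. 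The main obstacle is the step establishing reality of the canonical parameters: selecting, from the $k$-fold orbit of canonical holomorphic parameterizations of $g_\eta$, the representative singled out by the antiholomorphic square structure, and showing this representative is automatically real-valued when $\eta$ is real.
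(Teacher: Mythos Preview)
Your overall strategy---prepare the holomorphic second iterate $g_\eta$ via the known codimension-$k$ result and then exploit the antiholomorphic structure to force the canonical parameters real---is close in spirit to the paper's route, but there is a genuine gap in your final step.

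The identity $\log F_\eps'(Z_s)=\tfrac12\,\overline{v_\eps'(Z_s)}$ does \emph{not} follow by direct computation from the prepared form. After $g_\eps$ has been brought to its prepared form with the real-coefficient polynomial $P_\eps$, the antiholomorphic map does factor as $\overline{Z}+P_\eps(\overline{Z})\bigl(\tfrac12+\cdots\bigr)$, but at a real fixed point $Z_s$ the chain rule gives only $G_\eps'(Z_s)=|F_\eps'(Z_s)|^2$, which fixes the modulus of $F_\eps'(Z_s)$ and leaves its argument completely undetermined; at a period-$2$ pair $(Z_s,\overline{Z_s})$ the relation $G_\eps'(Z_s)=F_\eps'(\overline{Z_s})\,\overline{F_\eps'(Z_s)}$ is likewise one real condition short of pinning down both derivatives. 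The paper therefore performs an \emph{additional} change of coordinate $u_\eps(z)=z+P_\eps(z)K_\eps(z)$, with $K_\eps$ a polynomial of degree at most $k$ obtained by Lagrange interpolation from the conditions $u_\eps'(w_j)=\overline{\sqrt{f_\eps'(w_j)}}$, precisely so as to force $F_\eps'(w_j)=\overline{F_\eps'(\overline{w_j})}$. Only after this extra normalization does the logarithmic identity hold. Your proposal omits this step and asserts the conclusion as if it were automatic, which it is not.

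A secondary remark on the route: your opening Weierstrass preparation of $f_\eta(z)-\overline{z}$ in the variable $\overline{z}$ yields a polynomial whose roots are the values $w$ with $f_\eta(\overline{w})=w$; these are not a priori the fixed points of $g_\eta$ until one already knows that complex conjugation permutes the roots. The paper sidesteps this by first solving the imaginary part of the fixed-point equation $f_\eta(z)=z$ by the implicit function theorem and straightening the resulting curve to the real axis via $z\mapsto z+ih(\eta,z)$. After that the real-variable Weierstrass polynomial $P_{2,\eta}(x)$ has real coefficients by construction, and the paper identifies it with the Weierstrass polynomial for the fixed points of $g_\eta$ by observing that complex conjugate roots correspond to period-$2$ orbits of $f$. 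This is what makes the subsequent Kostov normalization---taking the representative tangent to the identity, hence preserving the real axis---land directly on real canonical parameters, rather than having to select the correct rotational representative after the fact, which you yourself flag as the unresolved ``main obstacle''.
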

 
\begin{proof} Let us consider the  fixed points of $f_\eta$. 
Taking $z= x+iy$, this leads to the two equations
\begin{align}\begin{split} 
0 &= \sum_{j=0}^{k+1} {\Re}(a_j) \left( x^j+y^2O(|x,y|^{j-2})\right) +\frac12x^{k+1}\left(1+O(\eta)+O(x)) +y^2O(|x,y|^{k-1}\right) \\
&\qquad+\sum_{j=1}^{k-1} {\Im}(a_j) y\,O(|x,y|^{j-1})+\dots,\\
0 &=-2y+ O(\eta) +o(|x,y|),\end{split}\end{align}
where coefficients of terms with negative exponent vanish. 
The second equation can be solved by the implicit function theorem, yielding $y=h(\eta,x)= O(\eta)+o(x)$, with $h$ real analytic in $(x,\eta)$.
Replacing this in the first equation yields
\begin{align}\begin{split}0&=\sum_{j=0}^{k} \left({\Re}(a_j)+O(|a_0|,\dots,  |a_{j-1}|)+ o(\eta)\right)x^j \\
&\qquad+ \frac12(1+O(\eta))x^{k+1}+ o(x^{k+1}).\label{eq_x}\end{split}\end{align}
By the Weierstrass preparation theorem in the real analytic case, then \eqref{eq_x} is equivalent to 
$P_{0,\eta}(x)=0$, with $P_{0,\eta}$ a Weierstrass polynomial of the form:
$$P_{0,\eta}(x) = \sum_{j=0}^k 2\left({\Re}(a_j)+O(|a_0|,\dots,  |a_{j-1}|)+o(\eta)\right)x^j  +x^{k+1}.$$
We make the change of variable $z= z_1+ih(\eta,z_1)$, which sends the real axis in $z_1$-space to $y=h(x)$ in $z$-space. Let $f_{1,\eta}$ be the expression of $f_\eta$ in the new variable $z_1$. 
Then all fixed points of $f_{1,\eta}$ occur on the real line in $z_1$-space. Moreover, if $z_1=x_1+iy_1$,  the equation for the  fixed points of $f_1$ has the same form as before: $y_1=0$ and
$$0=P_{1,\eta}(x_1) = \sum_{j=0}^k 2\left({\Re}(a_j)+O(|a_0|,\dots,  |a_{j-1}|)+o(\eta)\right)x_1^j  +x_1^{k+1}.$$

The next step is to make a translation by a real number $z_2= z_1+O(|a_0|,\dots,  |a_{k-1}|)+{\Re}(a_k)+o(\eta)$ transforming $P_{1,\eta}(x_1)$ to 
$$P_{2,\eta}(x_2) = \sum_{j=0}^{k-1} \alpha_j(\eta)x_2^j  +x_2^{k+1},$$
where $\alpha_j(\eta)= 2({\Re}(a_j)+o(\eta)).$ Let $\alpha= (\alpha_0, \dots, \alpha_{k-1})$. If the family is generic, the change of parameters $\eta\mapsto\alpha$ is invertible and we could as well take $\alpha$ as new parameter. But, in practice we will keep $\eta$.

\bigskip
When considering $f_\eta$ as a 2-dimensional real diffeomorphism, the eigenvalues at a fixed point are two opposite real numbers $\pm\lambda$ and determined by a unique real number $\lambda$ (this corresponds to the fact that only the norm of $f_\eta'(\lambda)$ is intrinsic). 

\bigskip If $f_{2,\eta}$ is the expression of $f_\eta$ in the variable $z_2$ and $z_2=x_2+iy_2$, then the fixed points of $f_{2,\eta}$ are the points $x_2+ i\cdot 0$, where $x_2$ is a real solution of $P_{2,\eta}(x_2)=0$, and there exists an open set in $\eta$-space in which $P_{2,\eta}$ has $k+1$ real roots corresponding to $k+1$ fixed points of $f_{2,\eta}$.

Let us now consider the equation  $P_{2,\eta}(x_2)=0$ with $x_2$ complex.  Since the polynomial has real coefficients, then the complex roots occur in conjugate pairs. 	
All solutions are also solutions of the equation $P_{2,\eta}(\ov{x}_2)=0$. Taking $z_2=x_2+i\cdot 0$, these points correspond to solutions of $f_2(z_2) =\ov{z_2}$. Hence a pair of complex conjugate roots $(w,\ov{w})$ of $P_{2,\eta}$ corresponds to a periodic orbit of period $2$ of $f_2$. 	
 
Let us consider $g_{2,\eta} =f_{2,\eta}\circ f_{2,\eta}$. Then $g_{2,\eta}$ is a  $k$ real parameter unfolding of a codimension $k$ holomorphic parabolic germ, which always has $k+1$ fixed points counting multiplicities. The equation for fixed points of $g_{2,\eta}$ is given by a  Weierstrass polynomial $p_{\eta}(z_2)$ depending real-analytically on $\eta$. The fixed points of $g_{2,\eta}$ are either fixed points of $f_{2,\eta}$ or belong to pairs $(w,\ov{w})$ of periodic points of $f_{2,\eta}$ with period $2$. Hence $p_\eta$ has real coefficients when $\eta$ is real. It follows that  $p_\eta\equiv P_{2,\eta}$.

\bigskip Let us now write $g_{2,\eta}$ in the form
$$g_{2,\eta}(z_2)= z_2+ P_{2,\eta}(z_2) \left(1+ q_\eta (z_2) + P_{2,\eta}(z_2) H_\eta(z_2)\right).$$  
Let $w_1, \dots, w_{k+1}$ be the fixed points of $g_{2,\eta}$. 
There exists a polynomial $S_\eta(z_2)$ de degree at most $k$ such that 
$$\log\left(g_{2,\eta}'(w_j)\right)= P_{2,\eta}'(w_j) (1+S_\eta(w_j)).$$
Indeed, when  the $w_j$ are distinct, let $M_j:=\frac{\log\left(g_{2,\eta}'(w_j)\right)}{P_{2,\eta}'(w_j)}-1$. Then such  a polynomial $S_\eta(z_2)$ is found by the following Lagrange interpolation formula:
$$S_\eta(z_2)=-\frac{\left|\begin{array}{lllll}0&1&z_2&\dots&z_2^k\\
M_1&1&w_1&\dots&w_1^k\\
\vdots&\vdots&\vdots&\vdots&\vdots\\
M_{k+1}&1&w_{k+1}&\dots&w_{k+1}^k\end{array}\right|}
{\left|\begin{array}{llll} 1&w_1&\dots&w_1^k\\
\vdots&\vdots&\vdots&\vdots\\
1&w_{k+1}&\dots&w_{k+1}^k\end{array}\right|}.$$
$S_\eta$ depends analytically on $\eta$, since it is invariant under permutations of the $w_j$. Moreover,  limits exist when two fixed points coallesce. Extending $\eta$ to complex values and using Hartogs' theorem allows to conclude that limits exist when more than two fixed points coallesce.
Since $P_{2,\eta}$ has real coefficients, since the complex conjugate roots of $P_{2,\eta}$ correspond to periodic points of period 2 of $f_{2,\eta}$ and using Lemma~\ref{lemma:multipliers}, it follows that for each root $w_j$ of $P_{2,\eta}$, then $\ov{w}_j$ is a root of $P_{2,\eta}$ and $\ov{M}_j:=\frac{\log\left(g_{2,\eta}'(\ov{w}_j)\right)}{P_{2,\eta}'(\ov{w}_j)}-1$, and thus that $S_\eta$ has real coefficients. 

Hence the logarithms of the multipliers at the fixed points of $g_{2,\eta}$ are the eigenvalues at the singular points of the vector field
\begin{equation}\dot z_2=v_{\eta}(z_2) = P_{2,\eta}(z_2)(1+S_\eta(z_2)).\label{pol_vf}\end{equation} By the variant of Kostov's theorem valid for real analytic dependence on parameters \cite{KR20}, there exists exactly $k$ changes of coordinate and parameter $(z_2,\eta)\mapsto (z_3,\eps)$ transforming \eqref{pol_vf} to 
$$\dot z_3 = \frac{z_3^{k+1} +\eps_{k-1} z_3^{k-1} +\dots + \eps_1z_3+\eps_0}{1+b(\eps)z_3^k}:=\frac{P_\eps(z_3)}{1+b(\eps)z_3^k}.$$
The $k$ one-parameter families of changes of coordinates are obtained one from another using the action of the rotation group of order $k$ on that vector field
$$(z_3, \eps_{k-1}, \dots, \eps_1,\eps_0)\mapsto (\tau z_3, \tau^{-k+2}\eps_{k-1}, \dots, \eps_1,\tau \eps_0),$$
where $\tau^k=1$. The one tangent to the identity, preserves the real axis, which is a privileged direction for $f_{3,\eta}$ (i.e. $f_{2,\eta}$ in the $z_3$ variable). Hence we choose a change of coordinate tangent to the identity (changes $z_3\mapsto -z_3$ are also allowed when $k$ is even). 

At this step the map $g_\eps$ is prepared. But the map $f_{3,\eps}$ may not be prepared yet. Indeed the derivatives of $f_{3,\eps}$ are not intrinsic. 
Considering that solutions of $P_\eps(z_3)=0$ are also solutions of $P_\eps(\ov{z}_3)=0$ and that these solutions are solutions of  $f_{3,\eps}(z_3)=\ov{z}_3$, then $f_{3,\eps}$ has the form
$$f_{3,\eps}(z_3)=\ov{z}_3+P_\eps(\ov{z}_3)M(\eps,\ov{z}_3).$$ 
By further dividing $M-\frac12$ by $P_\eps$, namely $$M(\eps,z_3)= \frac12+\sum_{\ell=0}^k m_\ell(\eps) z_3^\ell +P_\eps(z_3) N_\eps(z_3), $$ this yields
$$f_{3,\eps}(z_3)=\ov{z}_3+P_\eps(\ov{z}_3)\left( \frac12+\sum_{\ell=0}^k m_\ell(\eps) \ov{z}_3^\ell +P_\eps(\ov{z}_3) N_\eps(\ov{z}_3)\right).$$ 
If $w_1, \dots, w_{k+1}$ are the solutions of $P_\eps(z_3)=0$, then 
$$f_{3,\eps}'(w_j)= 1+P_\eps'(\ov{w}_j) \left( \frac12+\sum_{\ell=0}^k m_\ell(\eps) \ov{w}_j^\ell \right).$$ 
By Lemma~\ref{lemma:multipliers}, we already know that 
$$f_{3,\eps}'(w_j)\ov{f_{3,\eps}'(\ov{w}_j)}\in\R.$$

We look for a  change of coordinate $Z=u_\eps(z_3) = z_3+P_\eps(z_3)\left(\sum_{\ell=0}^k D_\ell z_3^\ell\right)$ preserving the fixed points and periodic points of period $2$ of $f_{3,\eps}$ so that if $F_\eps=u_\eps\circ f_{3,\eps}\circ u_\eps^{-1}$, then  
\begin{equation}F_\eps'(w_j)=\ov{F_\eps'(\ov{w}_j)}, \qquad j=1, \dots, k+1.\label{property:F}\end{equation}
Note that $F_\eps(w_j)= \ov{w}_j$. Hence $$F_\eps'(w_j)= \frac{u_\eps'(\ov{w}_j)}{\ov{u_\eps'(w_j)}} f_{3,\eps}'(w_j).$$
Hence we ask that 
\begin{equation}u_\eps'(w_j)= \ov{\sqrt{f_{3,\eps}'(w_j)}}.\label{def:u}\end{equation} 
If $w_j\in\R$ is a fixed point of $f_{3,\eps}$, then $F_\eps'(w_j)=|f_{3,\eps}'(w_j)|\in \R_{\geq0}$.
If $(w_j,\ov{w}_j)$ is a periodic orbit of period 2, then $F_\eps'(w_j) =\ov{\sqrt{f_{3,\eps}'(\ov{w}_j)}}\sqrt{f_{3,\eps}'(w_j)}$ and $F_\eps'(\ov{w}_j) =\sqrt{f_{3,\eps}'(\ov{w}_j)}\ov{\sqrt{f_{3,\eps}'(w_j)}}$. Hence $F$ satisfies \eqref{property:F}.

We now need to prove that it is possible to construct $u$ mix analytic satisfying \eqref{def:u}.

Let $K_\eps(z_3)=\sum_{\ell=0}^k D_\ell z_3^\ell$. Then $u_\eps'(w_j)= 1+P_\eps'(w_j)K_\eps(w_j)$, while  
$$\ov{\sqrt{f_{3,\eps}'(w_j)}}= \sqrt{1+P_\eps'(w_j)\left(\frac12+\sum_{\ell=0}^k \ov{m_\ell}(\eps) w_j^\ell\right) }:=1+P_\eps'(w_j) \left( \frac14+V_\eps(w_j)\right),$$
for some analytic function $V_\eps$.
Hence $K_\eps(w_j)=\frac14 +V_\eps(w_j):=W_j$. For distinct $w_j$ the polynomial $K_\eps$ is given by a Lagrange interpolation formula 
$$K_\eps(z_3)=-\frac{\left|\begin{array}{lllll}0&1&z_3&\dots&z_3^k\\
W_1&1&w_1&\dots&w_1^k\\
\vdots&\vdots&\vdots&\vdots&\vdots\\
W_{k+1}&1&w_{k+1}&\dots&w_{k+1}^k\end{array}\right|}
{\left|\begin{array}{llll} 1&w_1&\dots&w_1^k\\
\vdots&\vdots&\vdots&\vdots\\
1&w_{k+1}&\dots&w_{k+1}^k\end{array}\right|}.$$ Note that the conditions defining $K_\eps$ are analytic in $\eps$. Hence it is possible to complexify $\eps$. The formula has a limit when two $w_j$ coallesce. The limit also exists for the more degenerate cases by Hartogs' Theorem. Since the conditions are invariant under permutations of the $w_j$, the polynomial $K_\eps$ depends analytically on
$\eps$ by the symmetric function theorem. \end{proof}

\begin{corollary}\label{cor:canonical} When $k$ is odd, the canonical parameter of the prepared $f_\eps$ is unique. When  $k$ is even, conjugating $f_\eps$ with $L_{-1}(z) = -z$ yields a second prepared form $\hat{f}_{\hat{\eps}}= L_{-1}\circ f_\eps\circ L_{-1}$ with canonical parameter 
\begin{equation}  \hat{\eps}= (\eps_{k-1}, -\eps_{k-2}, \dots, \eps_1, -\eps_0).\label{canonical_odd}\end{equation}\end{corollary}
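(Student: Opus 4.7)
The plan is to trace back through the proof of Theorem~\ref{thm_prepared} and locate the only step at which a non-trivial ambiguity in the canonical parameters can survive, namely the appeal to Kostov's theorem. That theorem, in the real-analytic version of \cite{KR20} invoked in the proof, produces exactly $k$ normalizations of the polynomial vector field $v_\eta$, related by the action of the rotation group of order $k$:
\[(z,\eps_0,\dots,\eps_{k-1}) \mapsto \bigl(\tau z,\ \tau\eps_0,\ \tau^2\eps_1,\ \dots,\ \tau^{k}\eps_{k-1}\bigr), \qquad \tau^k=1.\]
All preceding steps of the preparation (rectifying the curve of fixed points to $\R$, the real translation killing the $z^k$ coefficient, and the final change $u_\eps$ enforcing $F_\eps'(w_j)=\ov{F_\eps'(\ov{w}_j)}$) are uniquely determined by the real data, so the whole ambiguity reduces to this cyclic action.

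The first step of the argument is to restrict $\tau$ to be real. The prepared form requires $P_\eps$ to have real coefficients (its roots are either real fixed points of $F_\eps$ or complex-conjugate pairs corresponding to period-two orbits) and $b(\eps)\in\R$. The rotation sends $\eps_j$ to $\tau^{1-j}\eps_j$, and simultaneous preservation of reality for all $j$ forces $\tau\in\R$. Solving $\tau^k=1$ with $\tau\in\R$ yields $\tau=1$ when $k$ is odd, and $\tau\in\{1,-1\}$ when $k$ is even, which already settles the uniqueness statement in the odd case.

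For $k$ even, the remaining task is to identify the choice $\tau=-1$ with conjugation by $L_{-1}$ and to read off the induced action on $\eps$. I would do this by direct substitution: from $\hat F_{\hat\eps}(Z)=-F_\eps(-Z)$, together with $k$ even (so $(-1)^{k+1}=-1$ and $(-\ov Z)^k=\ov Z^k$), one obtains the new Weierstrass polynomial
\[\hat P_{\hat\eps}(\ov Z)=-P_\eps(-\ov Z) = \ov Z^{k+1}+\sum_{j=0}^{k-1}(-1)^{j+1}\eps_j\ov Z^j,\]
so that $\hat\eps_j=(-1)^{j+1}\eps_j$, which is exactly the tuple \eqref{canonical_odd} when written in the order $(\hat\eps_{k-1},\dots,\hat\eps_0)$. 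A short check then confirms that the remaining factor retains the prescribed shape $\tfrac12+Q+PR$, that $b(\hat\eps)=b(\eps)$ stays real, and that the normalization $F_\eps'(w_j)=\ov{F_\eps'(\ov{w}_j)}$ is preserved under $L_{-1}$, so $\hat F_{\hat\eps}$ is genuinely a prepared family.

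The delicate point is the reality argument in the first step: one must exclude the possibility that, after applying the Kostov rotation with a non-real $\tau$, some other real-analytic change of coordinate in the preparation could compensate and restore real coefficients in $P_\eps$. This is ruled out precisely because every non-Kostov step of Theorem~\ref{thm_prepared} is uniquely determined by the real data, so the Kostov rotation is the only residual freedom and its effect on reality cannot be undone.
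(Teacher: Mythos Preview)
Your proof is correct and follows the same route the paper takes (the paper states the corollary without a separate proof, since the key observation---that the Kostov normalization is unique up to the rotation group action, and that only $\tau\in\R$ preserves the real axis---is already made inside the proof of Theorem~\ref{thm_prepared}). Your phrasing via reality of the coefficients of $P_\eps$ is equivalent to the paper's phrasing via preservation of the real axis as the privileged direction carrying the fixed points.

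One small inconsistency to fix: your displayed rotation action $\eps_j\mapsto\tau^{j+1}\eps_j$ does not match the formula you use two lines later, $\eps_j\mapsto\tau^{1-j}\eps_j$ (which is the one in the paper and the correct one coming from the invariance of $v_\eps$). For instance your display gives $\eps_1\mapsto\tau^2\eps_1$, whereas the correct action fixes $\eps_1$. This does not affect your argument, since the conclusion $\tau\in\R$ already follows from the $\eps_0$ component, on which both formulas agree; but you should make the two lines consistent.
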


\section{Modulus of analytic classification}

We now consider a germ of generic antiholomorphic family unfolding a parabolic point of codimension $k$ in prepared form
\begin{equation}f_\eps(z)=\ov{z}+ P_\eps(\ov{z})\left(\frac12+Q_\eps(\ov{z})+P_\eps(\ov{z})R_\eps(\ov{z})\right),\label{family_prepared}\end{equation} as described in Theorem~\ref{thm_prepared}.
As in Lemma~\ref{lem:complexify}, we complexify the parameter $\eps$ in $(\C^k,0)$, we ask that $f_\eps$ depends antiholomorphically on $\eps$, and we define  the second iterate 
as in \eqref{def:g}. Germs of generic analytic unfoldings of a holomorphic parabolic point of codimension $k$ have been studied in \cite{R15}  and we will see that two prepared germs of antiholomorphic families $f_{1,\eps}$ and $f_{2,\eps}$ are conjugate under a conjugacy tangent to the identity depending real-analytically on $\eps\in (\R^k,0)$  if and only if the corresponding homolorphic  families $g_{1,\eps}= f_{1,\epsbar}\circ f_{1,\eps}$ and $g_{2,\eps}= f_{2,\epsbar}\circ f_{2,\eps}$, with complex analytic dependence on $\eps\in (\C^k,0)$, are analytically conjugate under a conjugacy tangent to the identity.

For real $\eps$, the formal normal form of $f_\eps$ is given by $\sigma\circ v_\eps^{\frac12}=v_{\eps}^{\frac12}\circ \sigma$, where $v_\eps^{t}$ is the time $t$ of the vector field
\begin{equation} v_\eps= \frac{P_\eps(z)}{1+b(\eps)z^k}\,\frac{\partial}{\partial z}, \label{def:v}\end{equation} and
\begin{equation}
 P_\eps(z)= z^{k+1} + \sum_{j=0}^{k-1} \eps_jz^j.\label{def:P} \end{equation}
 For complex values of $\eps$ we have to think of the formal normal form meaning that  \begin{equation}\hat{h}_{\ov{\eps}}\circ f_\eps\circ (\hat{h}_\eps)^{-1}= \sigma\circ v_\eps^{\frac12}=v_{\ov{\eps}}^{\frac12}\circ \sigma\label{change_nf} \end{equation} for some formal map $\hat{h}_\eps$.

We want to describe the dynamics of the germ of family. In practice, this means describing the dynamics for $z$ in a disk $\D_r$ of radius $r$, for all values of the parameter in some polydisk $|\eps|<\rho$. 
The general spirit is that if $\rho$ is taken sufficiently small so that the fixed points stay bounded away from $\partial \D_r$, for instance in $\D_{r/2}$, then the dynamics is structurally stable in the neighborhood of $\partial \D_r$, and this dynamics organizes  the whole dynamics inside the disk. The modulus of analytic classification measures the obstruction to transforming analytically the family into the formal normal form. To construct the modulus, we transform the family almost uniquely to the normal form on (generalized) sectors in $z$-space. (Note that $f_\eps$ sends one sector to a different sector.) In accordance with the general spirit just mentioned, these generalized sectors are constructed from the behavior around $\partial \D_r$ and then following the dynamics inwards. Then the modulus is given by the mismatch of the normalizing transformations. In the construction,  $2k$ generalized sectors are needed, if we add the additional constraint that the generalized sectors have a limit when $\eps\to 0$.

In practice, it is more natural to change coordinate to the time coordinate of the vector field $v_\eps$, given by $$Z_\eps=\int\frac{1+b(\eps)z^k}{P_\eps(z)}\,dz.$$ In this new coordinate $f_\eps$ is transformed to $F_\eps= Z_{\ov{\eps}}\circ f_\eps\circ Z_\eps^{-1}$ and  the normal form to $T_{\frac12}\circ \Sigma,$ where $\Sigma$ is a complex conjugation defined in the Riemann surface of the time coordinate by lifting $\sigma$ (see Definition~\ref{def:Sigma} below), and  $T_{\frac12}$ is the translation by $\frac12$ (see Notation~\ref{notation}). Then, in the $Z_\eps$-coordinate, the sectors will correspond to the saturation by the dynamics of strips transversal to the horizontal direction and we need to consider pairs of sectors for $Z_\eps$ and $Z_{\ov{\eps}}$.  

\subsection{The time coordinate $Z_\eps$}

The time coordinate $Z_\eps$ is multivalued over the disk punctured at the fixed points and the image $Z_\eps\left(\D_r\setminus\{P_\eps(z)=0\}\right)$ is a complicated Riemann surface. In practice we work with  $2k$ charts defined from $\partial \D_r$ and going inwards. 
For $j=0,\pm 1, \dots \pm k$ (with indices $({\rm mod}\: 2k))$, we define 
$$Z_{\eps,j}(z) = \int_{\zeta_j}^z \frac{1+b(\eps)z^k}{P_\eps(z)} \,dz,$$
where $\zeta_0=r$ and, for $j=\pm1, \dots, \pm k$, $\zeta_j$ close to $\partial \D_r$ is defined by $\int_{\gamma_j} \frac{1+b(\eps)z^k}{P_\eps(z)} \,dz= \frac{2\pi i b(\eps)}{k}$ with $\gamma_j$ an arc from $\zeta_{j-1}$ to $\zeta_j$ located in the neighborhood of $\partial \D_r$. The chart for $Z_{\eps,j}$ contains the arc $\left\{re^{i\theta}\mid \theta\in (\frac{\pi j}{k}-\frac{\pi}{2k}, \frac{\pi j}{k}+\frac{\pi}{2k})\right\}$.
In particular
\begin{equation} 
Z_{\eps,j}(z) = Z_{\eps,j-1}(z) -  \frac{2\pi i b(\eps)}{k}\label{relation:Z_j}\end{equation}
where the indices are $({\rm mod}\: 2k)$.

Each simple singular point $z_s$ of $v_\eps$ has a nonzero period given by $2\pi i\, {\rm Res} \left(\frac{1+b(\eps)z^k}{P_\eps(z)}, \allowbreak z_s\right)$. Moreover, the fixed points of $f_\eps$ are sent at infinity in directions which rotate when the parameter varies. Note that the periods of points are unbounded and have an infinite limit when two singular points merge together. 

What is important is that the whole dynamics is organized by the structurally stable behavior in the neighborhood of $\partial D_r$ (see Figure~\ref{fleur3}). 
For sufficiently small $\eps$ the image of $\partial D_r$ is, roughly speaking, a $k$-covering of a curve close to a circle of radius $R=\frac1{kr^k}$ (there is an extra discrepancy of $2\pi i b(\eps)$, which is small compared to the radius $R$) and the interior of the disk is sent to a $k$-sheeted surface on the exterior of the image circle (but there is again an extra discrepancy of $2\pi i b(\eps)$). The interior of the image circle is often called a \emph{hole}.
Because of the periods, there are sequences of holes on the Riemann surface of $Z_\eps$. In the limit $\eps=0$, only one hole remains, the \emph{principal hole}, while the others have disappeared at infinity. 

\begin{definition}\label{def:Sigma} The complex conjugation $\sigma$ is lifted in the time coordinate to $\Sigma$. For real $\eps$, then $\Sigma$ is the usual complex conjugation in the coordinate $Z_{0,\eps}$, and then extended antiholomorphically over the Riemann surface of the time. It is then antiholomorphically extended in nonreal $\eps$.
If $U_{\eps,j}$ is the image of $Z_{\eps,j}$ then  $\Sigma: U_{\eps,j}\rightarrow U_{\ov{\eps},-j}$ satisfies
$$\Sigma \circ Z_{\eps,j}= Z_{\ov{\eps},-j}\circ \sigma.$$
\end{definition}

\subsection{The $2k$ sectors in $z$-space}

\begin{figure} \begin{center} 
\subfigure[In $z$-space]{\includegraphics[width=5cm]{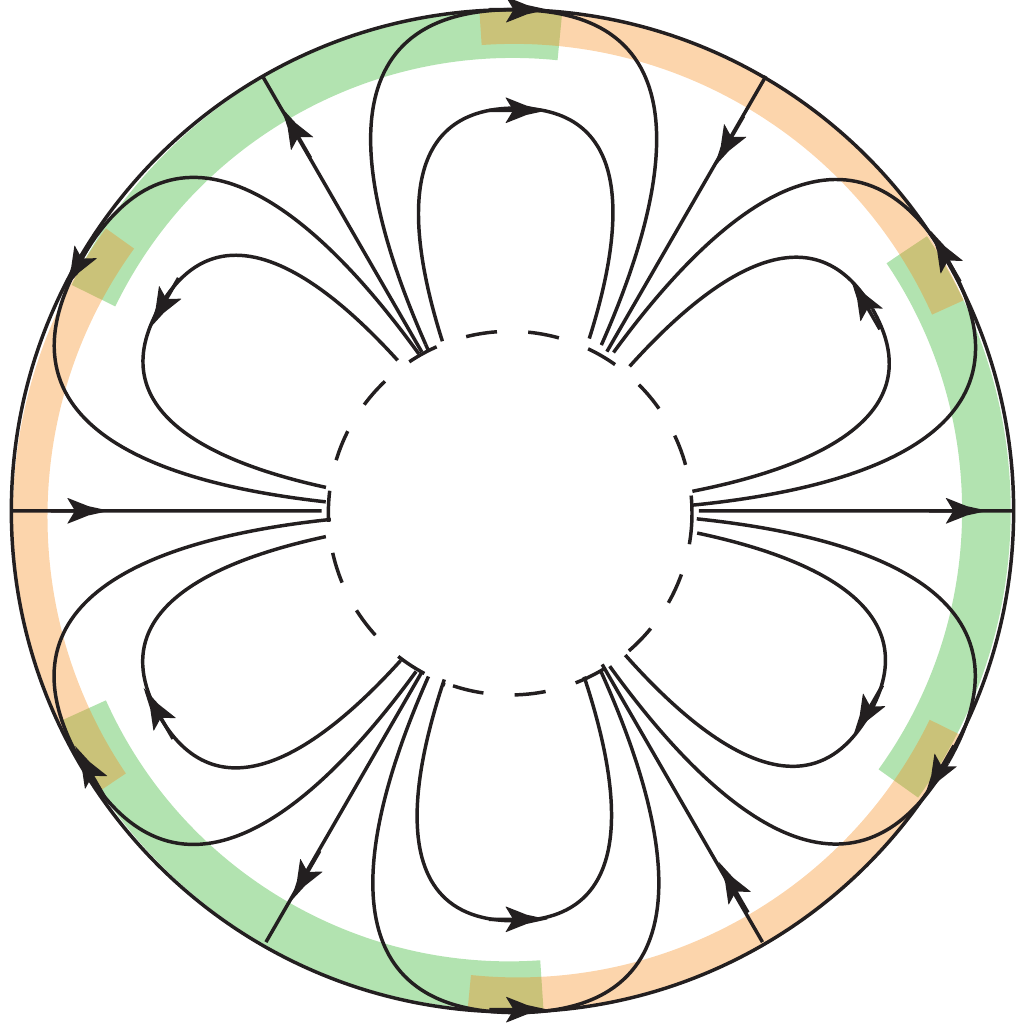}}\qquad\quad\subfigure[In $Z_\eps$-space]{\includegraphics[width=5cm]{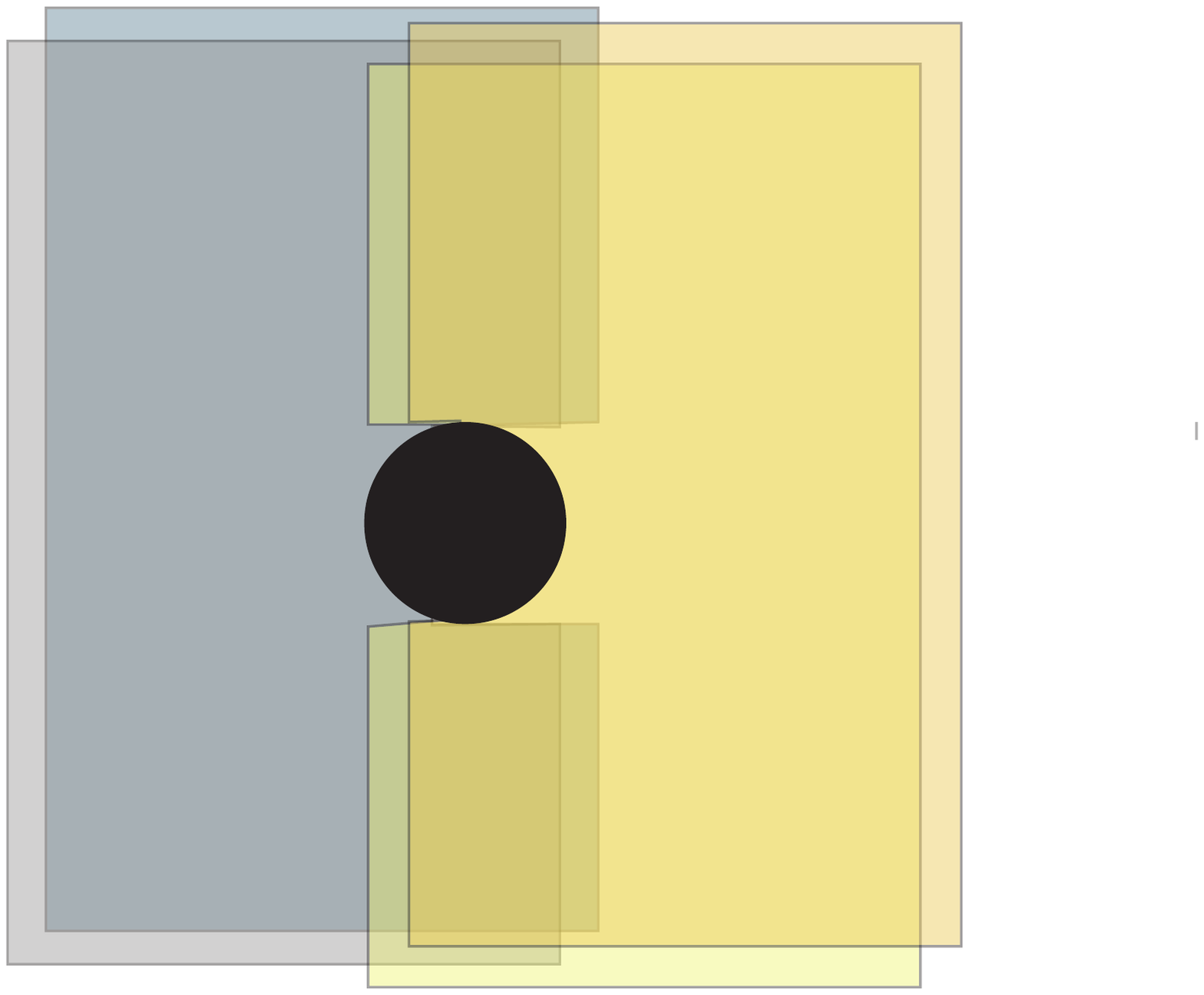}}\caption{The $2k$ sectors near  $\partial \D_r$ and the corresponding sectors in time space.}\label{fleur3}\end{center}\end{figure}
The $2k$ sectors in $z$-space will be attached to $\partial \D_r$ as in Figure~\ref{fleur3}. In the generic case of simple singular points their boundary will be given by (see Figure~\ref{ssecteurs}):
\begin{itemize} 
\item one arc $\gamma$ along $\partial \D_r$ containing $\left\{re^{i\theta}\mid \theta\in (\frac{\pi j}{k}-\frac{\pi}{2k}, \frac{\pi j}{k}+\frac{\pi}{2k})\right\}$ for some $j$ as in Figure~\ref{fleur3},
\item one arc from one end of $\gamma$ to one singular point, 
\item a second arc from the other end $\gamma$ to a second singular point,
\item an arc between the two singular points. 
\end{itemize} 
The last three arcs will often be spiralling when approaching the singular points. All together the $2k$ sectors  provide a covering of $\D_r\setminus\{P_\eps(z)=0\}$. 
\begin{figure} \begin{center} 
\subfigure{\includegraphics[width=2.8cm]{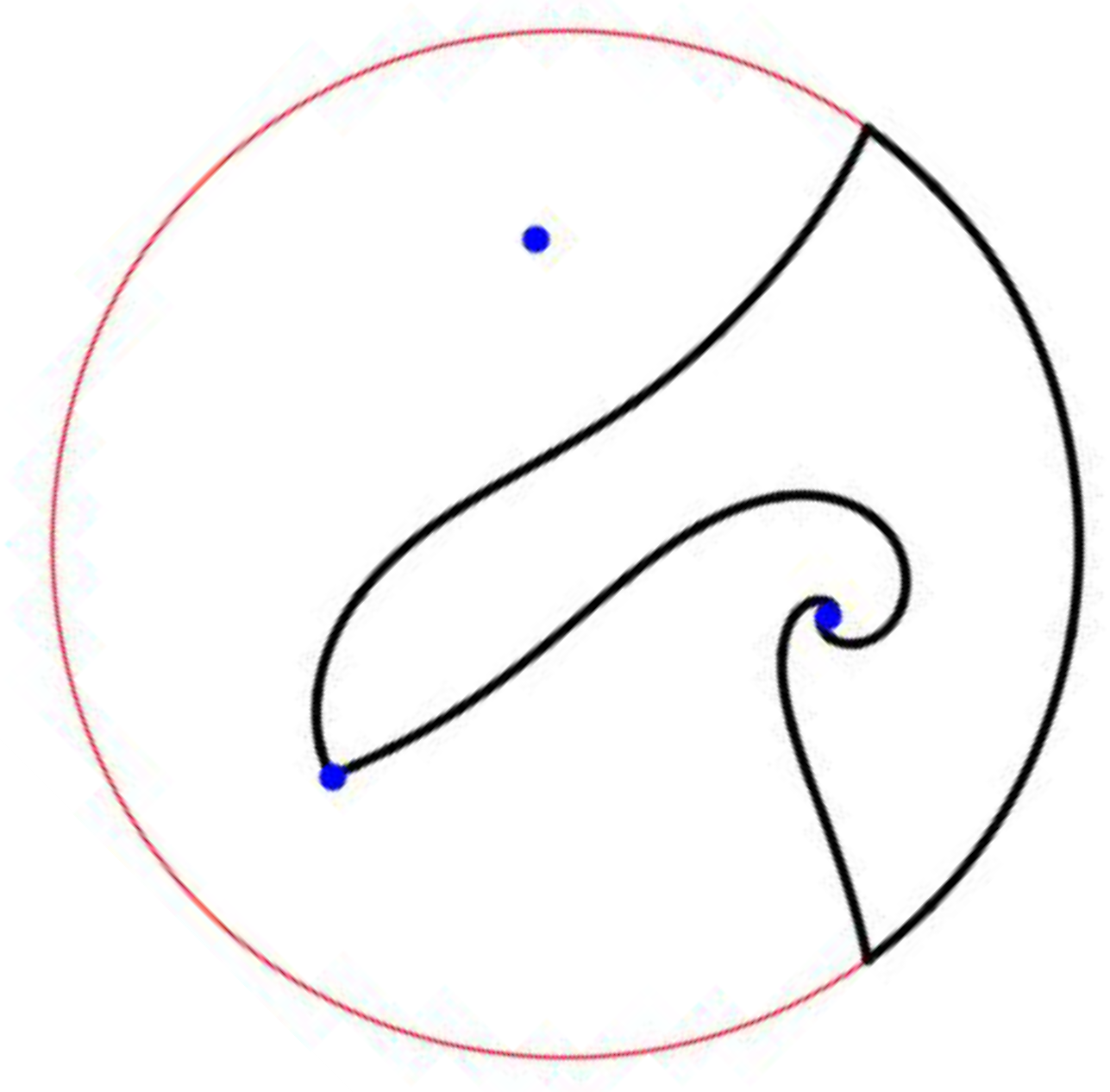}}\quad\subfigure{\includegraphics[width=2.8cm]{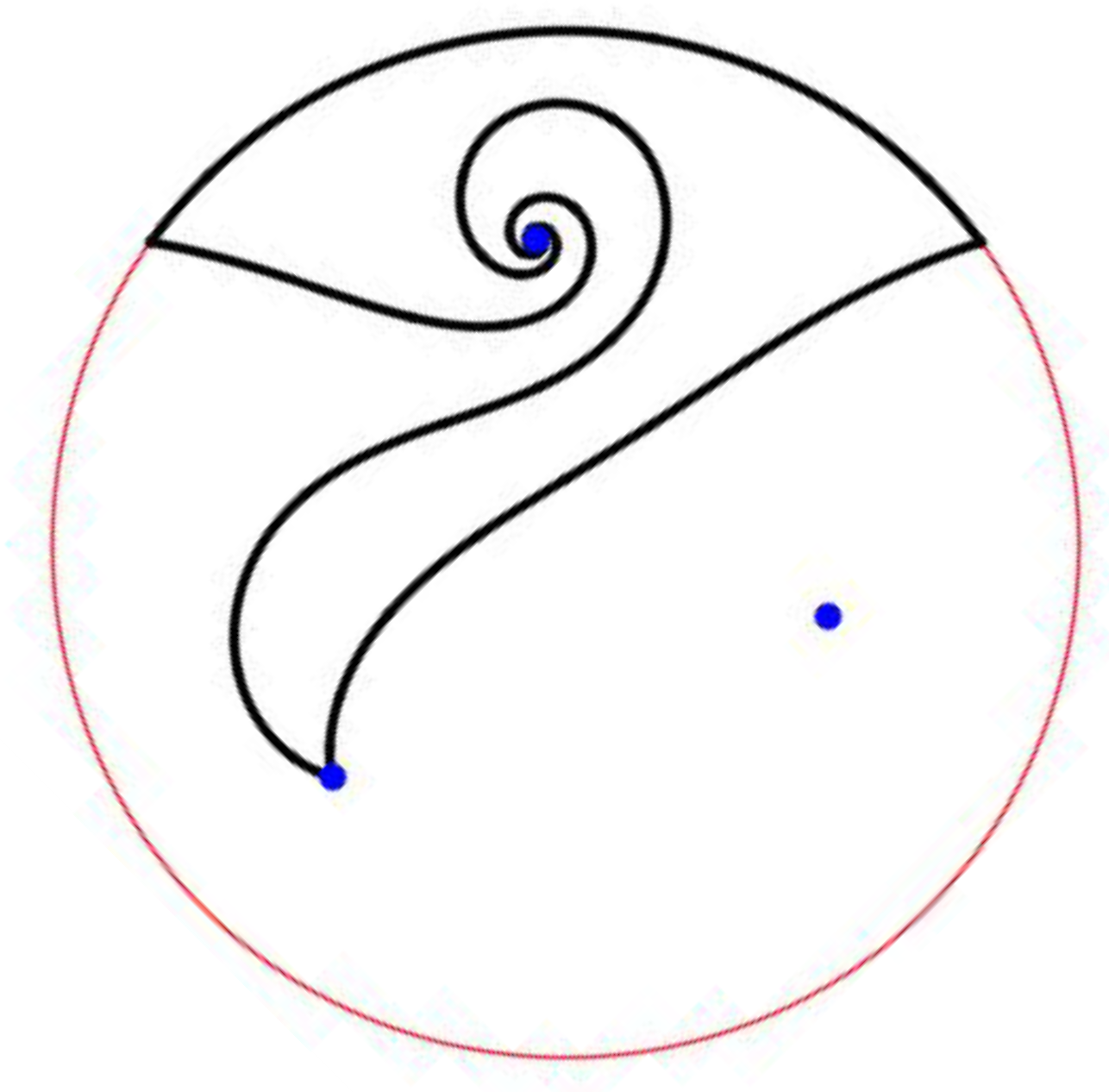}}\quad
\subfigure{\includegraphics[width=2.8cm]{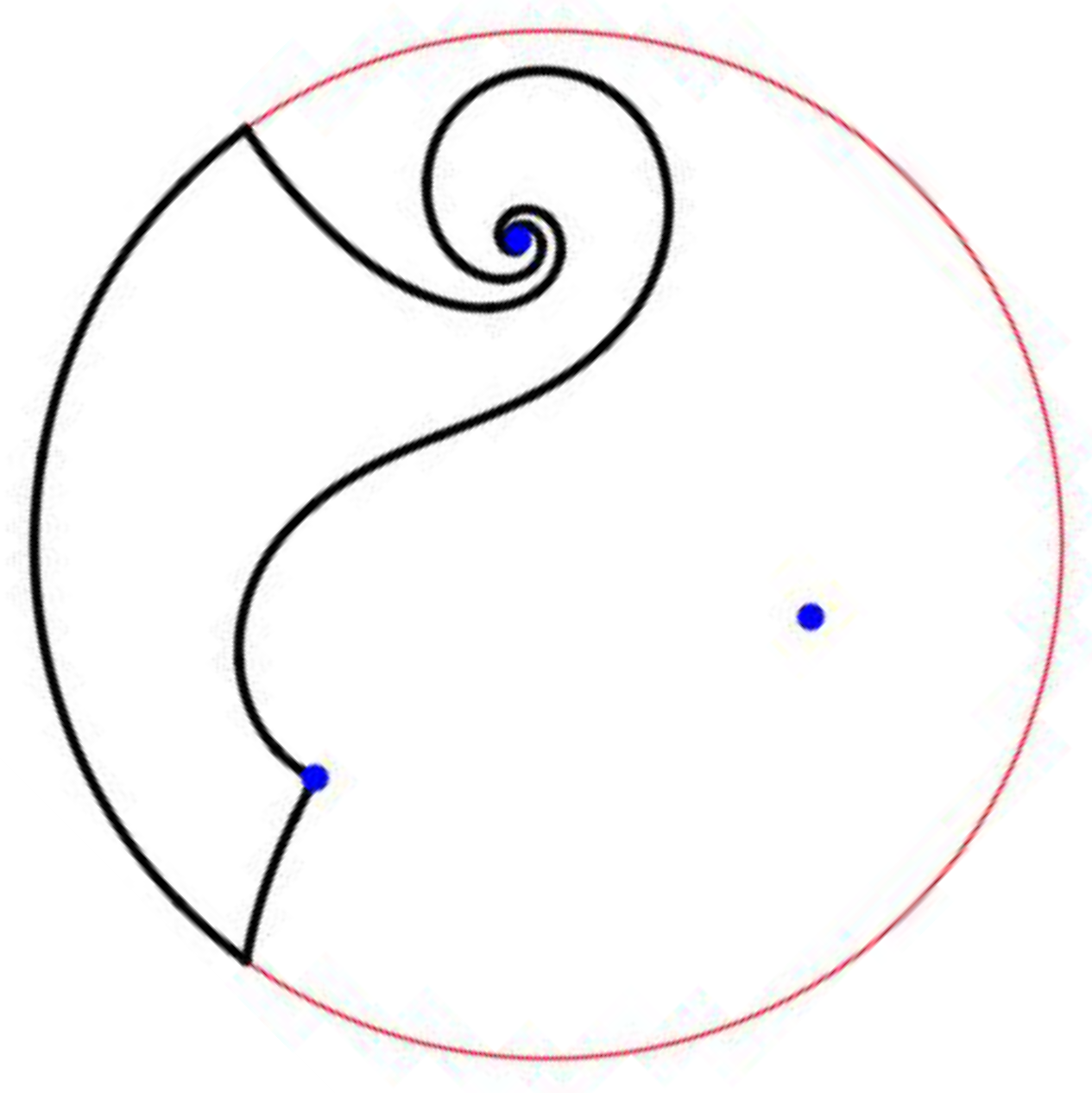}}\quad\subfigure{\includegraphics[width=2.8cm]{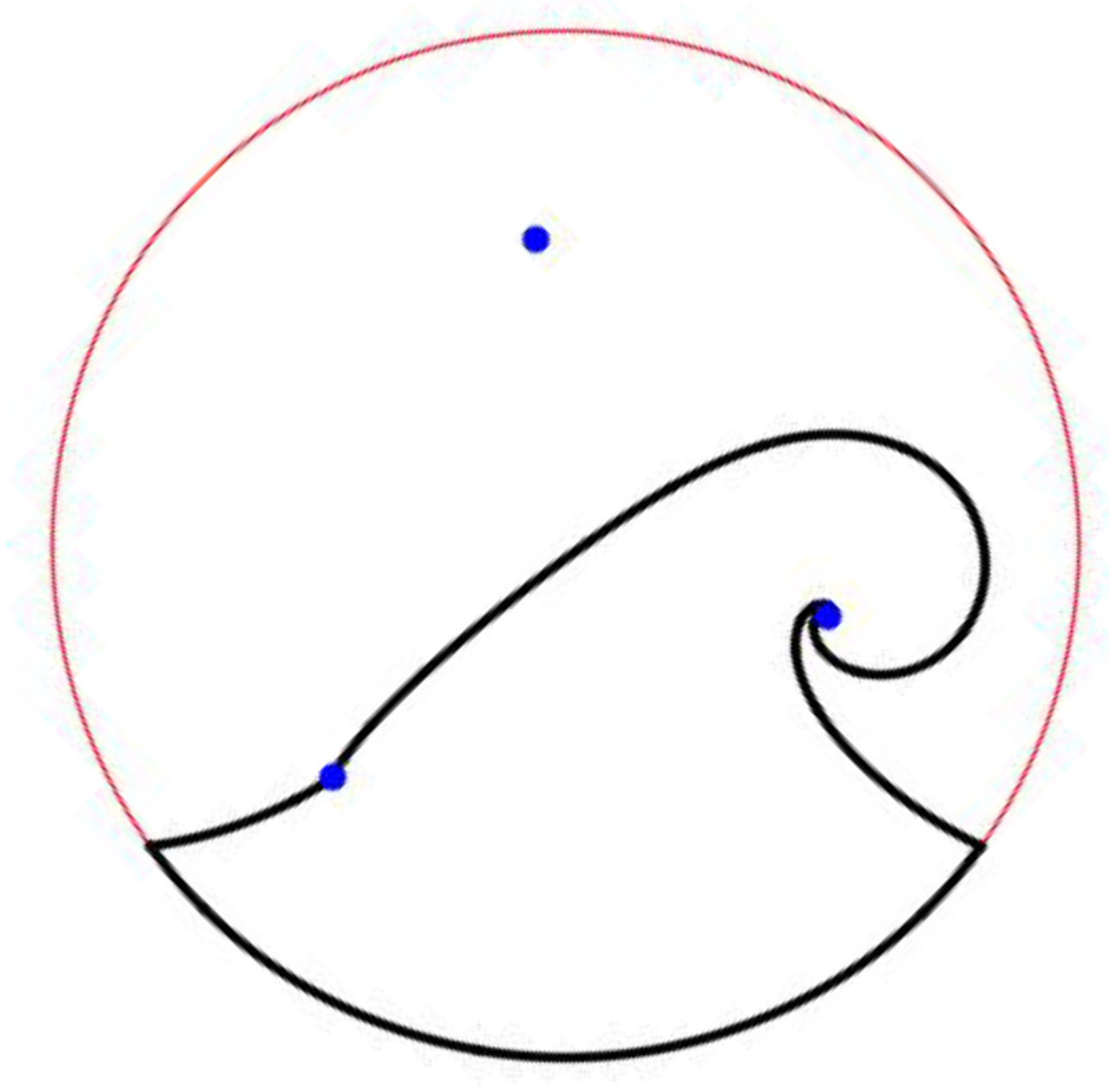}}\caption{The $4$ sectors for $P_\eps(z) =z^3+\frac{2+i}{20}z+\frac{1+6i}{30}e^{\frac{i\pi}4}$.}\label{ssecteurs}\end{center}\end{figure}
Note the shape of the intersection of the four sectors in Figure~\ref{intersecteurs}.
\begin{figure} \begin{center} 
\includegraphics[width=5cm]{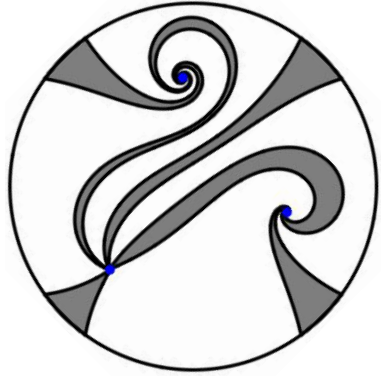}\caption{The intersections of the four sectors of Figure~\ref{ssecteurs}: four intersection parts link a fixed point to the boundary and have a limit when the fixed points merge together. The two other parts (called \emph{gate sectors}) link two fixed points and disappear when the two points merge together. }\label{intersecteurs}\end{center}\end{figure}

Because the singular points move around inside the disk, the  $2k$ sectors cannot be defined depending continuously on the parameters in a uniform way in the parameter space. Hence we will need to  use a covering of the parameter space minus the discriminant set (where multiple fixed points occur) by $C(k)=\frac{\binom{2k}{k}}{k+1}$ simply connected \emph{sectoral domains}. To describe these sectoral domains we need to consider the dynamics of $w_\eps= iv_\eps$. But, in practice, it suffices to work with the polynomial vector field $iP_\eps(z)\frac{\partial}{\partial z}$, which has the same fixed points as $w_\eps$ and whose real-time trajectories inside $\D_r$ are close to those of $w_\eps$. 

The \lq\lq generic\rq\rq\ polynomial vector fields have been described by Douady-Estrada-Sentenac \cite{DES05} (see Section~\ref{sec:DES} below).  
The sectoral domains are enlargements of the $C(k)$ generic strata of Douady-Estrada-Sentenac \cite{DES05}  and cover the parameter space minus the discriminant set. The discriminant set has complex codimension 1. Hence, to secure conjugacy of the families over the full parameter space, it will be sufficient to describe a modulus outside the discriminant set, thus  guaranteeing that two families with same modulus are conjugate over the complement of the discriminant set, and then to check  that the conjugacy remains bounded when approaching the discriminant set. 

\subsection{The work of Douady, Estrada and Sentenac}\label{sec:DES}
The paper \cite{DES05} classifies \lq\lq generic\rq\rq\  monic polynomial vector fields $P_\eps(z)\frac{\partial}{\partial z}$ up to affine transformations by means of an invariant composed of two parts: a combinatorial part and an analytic part given by a vector of $\H^k$. (The corresponding description for $iP_\eps(z)\frac{\partial}{\partial z}$ follows through $z\mapsto \tau z$ for $\tau^k=-i$.)

The dynamics of $P_\eps(z)\frac{\partial}{\partial z}$ is governed by the pole at infinity and its $2k$ separatrices alternately stable and unstable (see Figure~\ref{figure_infinity}). 
\begin{figure} \begin{center} 
\includegraphics[width=5cm]{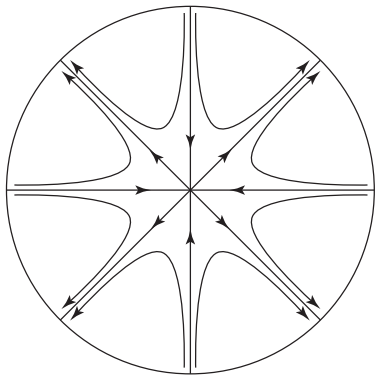}\caption{The pole at infinity of $P_\eps(z)\frac{\partial}{\partial z}$ and its separatrices organizing the dynamics in the  neighborhood of $\partial \D_r$ as in Figure~\ref{fleur3}.}\label{figure_infinity}\end{center}\end{figure}
Douady, Estrada and Sentenac
 have studied the generic case where the singular points are simple and there is no homoclinic loop through infinity, which we call \emph{DES-generic}. Under the  DES-generic hypothesis,   the separatrices land at the $k+1$ singular points, which are foci or nodes (the eigenvalue has a nonzero real part). Moreover, the singular points are linked by trajectories. Two  trajectories joining two singular points are called \emph{equivalent} if they have the same $\alpha$-limit and $\omega$-limit points. The equivalence classes of trajectories can be considered as the edges of a tree graph with $k+1$ vertices located at the fixed points. The combinatorial part of the  Douady-Estrada-Sentenac invariant is given by the tree graph and the way to attach it to the separatrices (see Figure~\ref{fig_skeleton}). There are $C(k)$ different combinatorial parts, yielding  $C(k)$ generic DES strata. Each DES stratum is parametrized by $\H^k$.

 \begin{figure}\begin{center}\includegraphics[width=6cm]{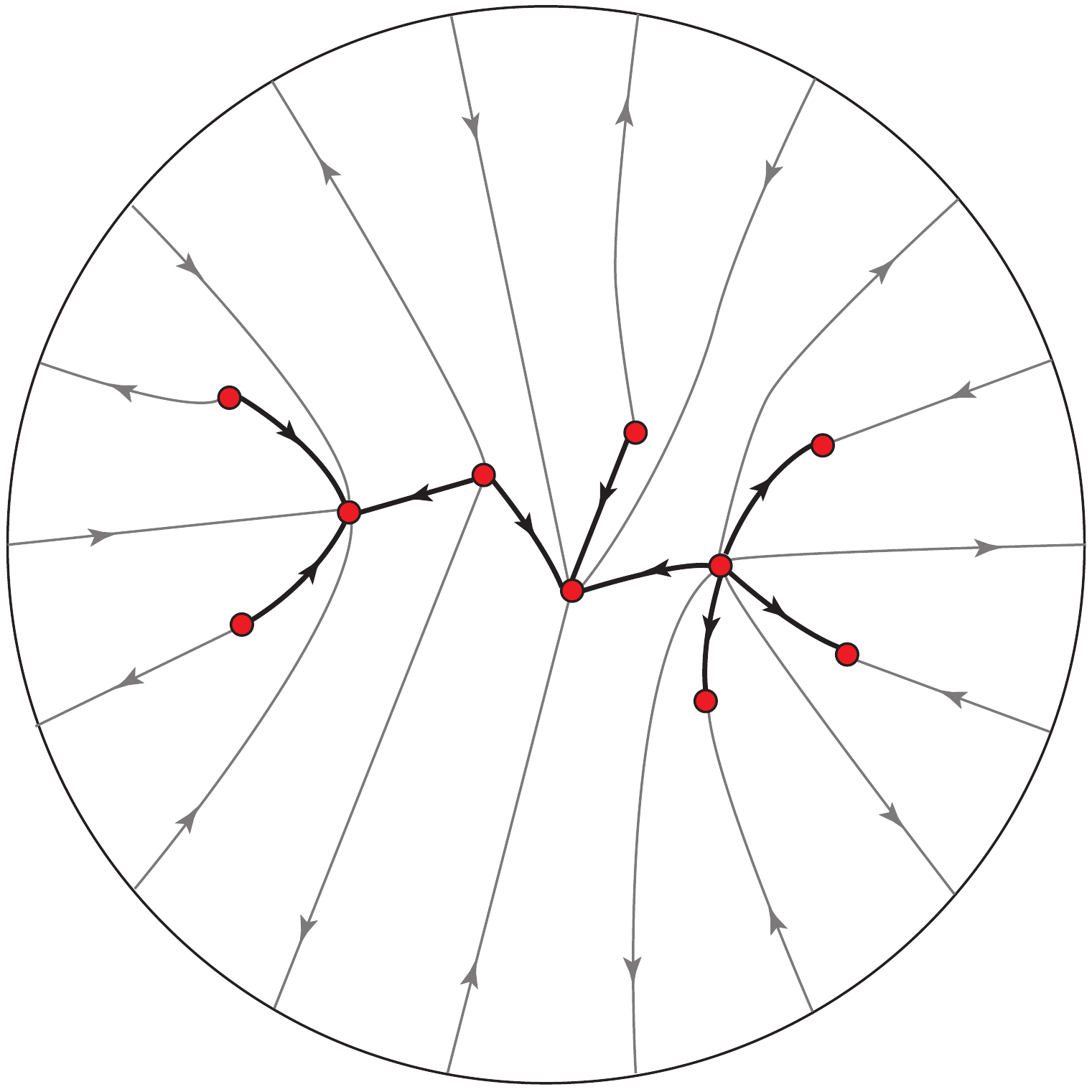}\caption{The tree graph and its attachment to the separatrices. (The figure is topological and the trajectories and separatrices could spiral when approaching the singular points.)}\label{fig_skeleton}\end{center}\end{figure}
  
Exceptionally, some separatrices can merge by pairs, one stable, one unstable, in homoclinic loops through $\infty$. A necessary condition for this to occur is that the sum of the periods of the singular points surrounded by the homoclinic loop is a real number. Generically, this occurs on hypersurfaces of real codimension 1, which separate the strata of DES-generic vector fields. 

Apart from the multiple singular points, the homoclinic loops are the only bifurcations. In particular, there are no limit cycles and any singular point with a pure imaginary eigenvalue is a center surrounded by a homoclinic loop through infinity. 

In the DES-generic case, the separatrices split the plane into $k$ connected regions, each adherent to two fixed points, one attracting, one repelling (see Figure~\ref{fig_skeleton_sectors}(a)). It is these connected regions for the vector field $iP_\eps(z)\frac{\partial}{\partial z}$ that will be used to define the $2k$ sectors. 
\begin{figure}\begin{center}\subfigure[Two connected regions]{\includegraphics[width=3.3cm]{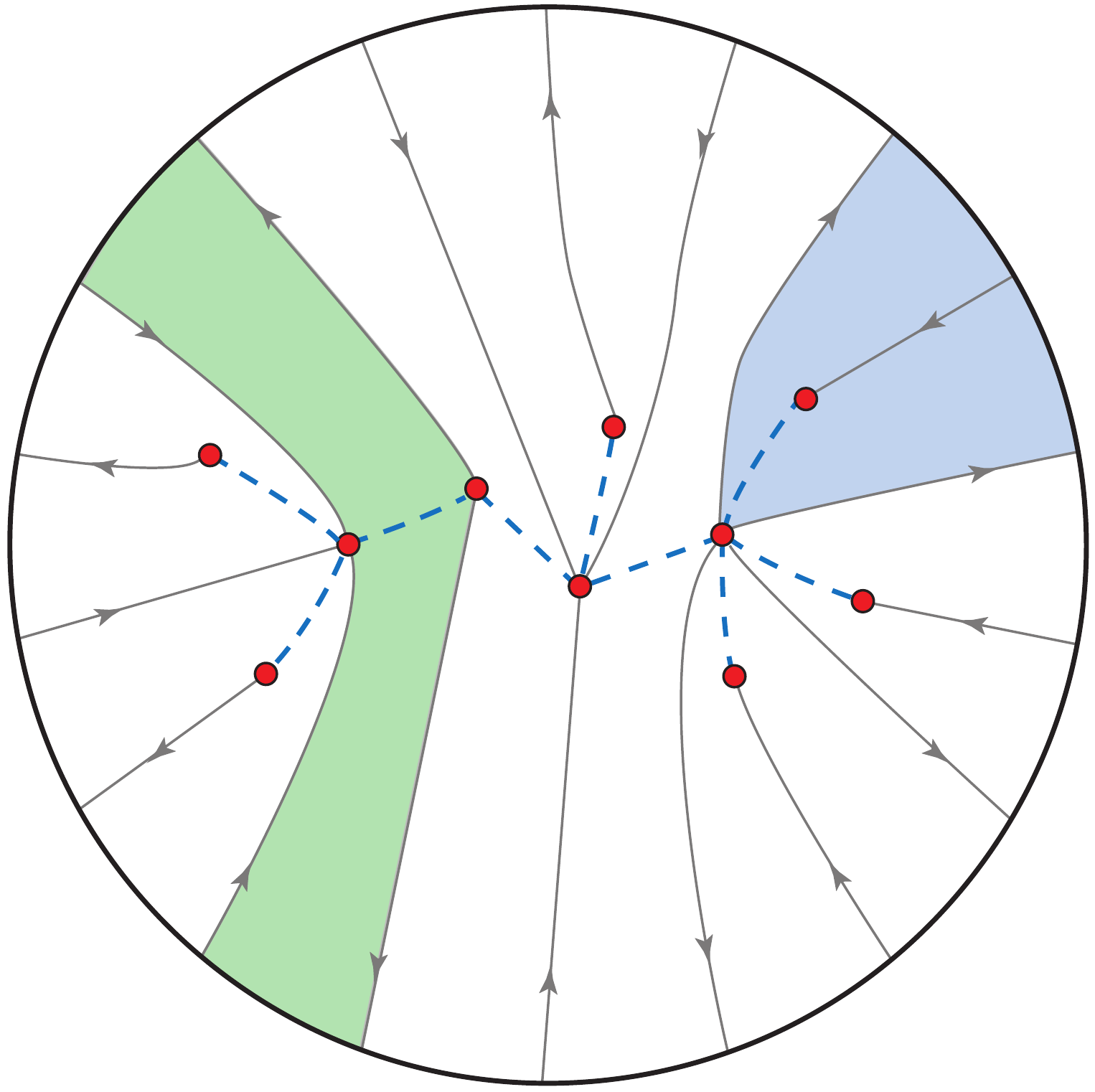}}\quad\subfigure[The corresponding half-regions]{\includegraphics[width=3.3cm]{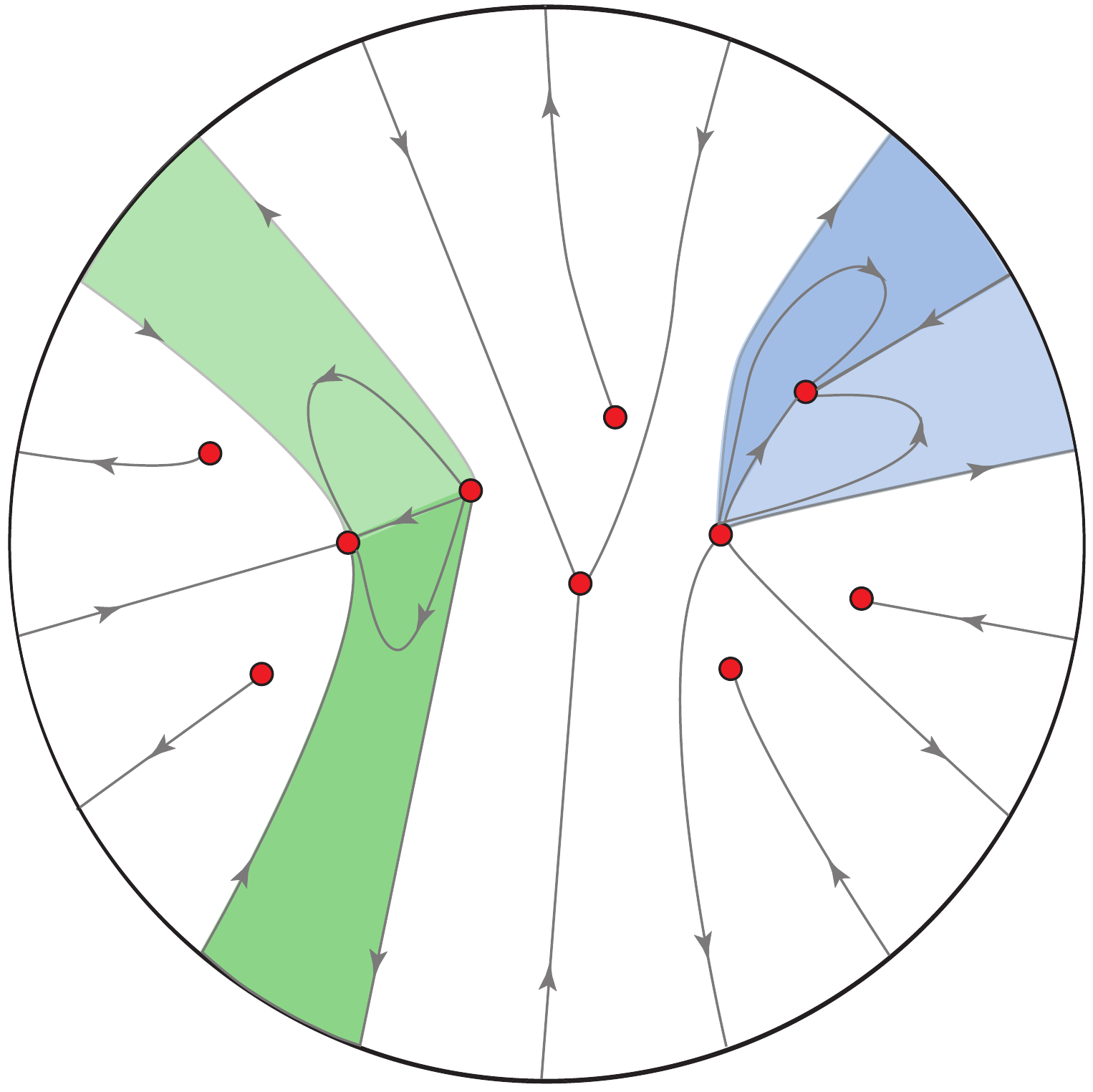}}\quad\subfigure[The enlarged half-regions]{\includegraphics[width=3.3cm]{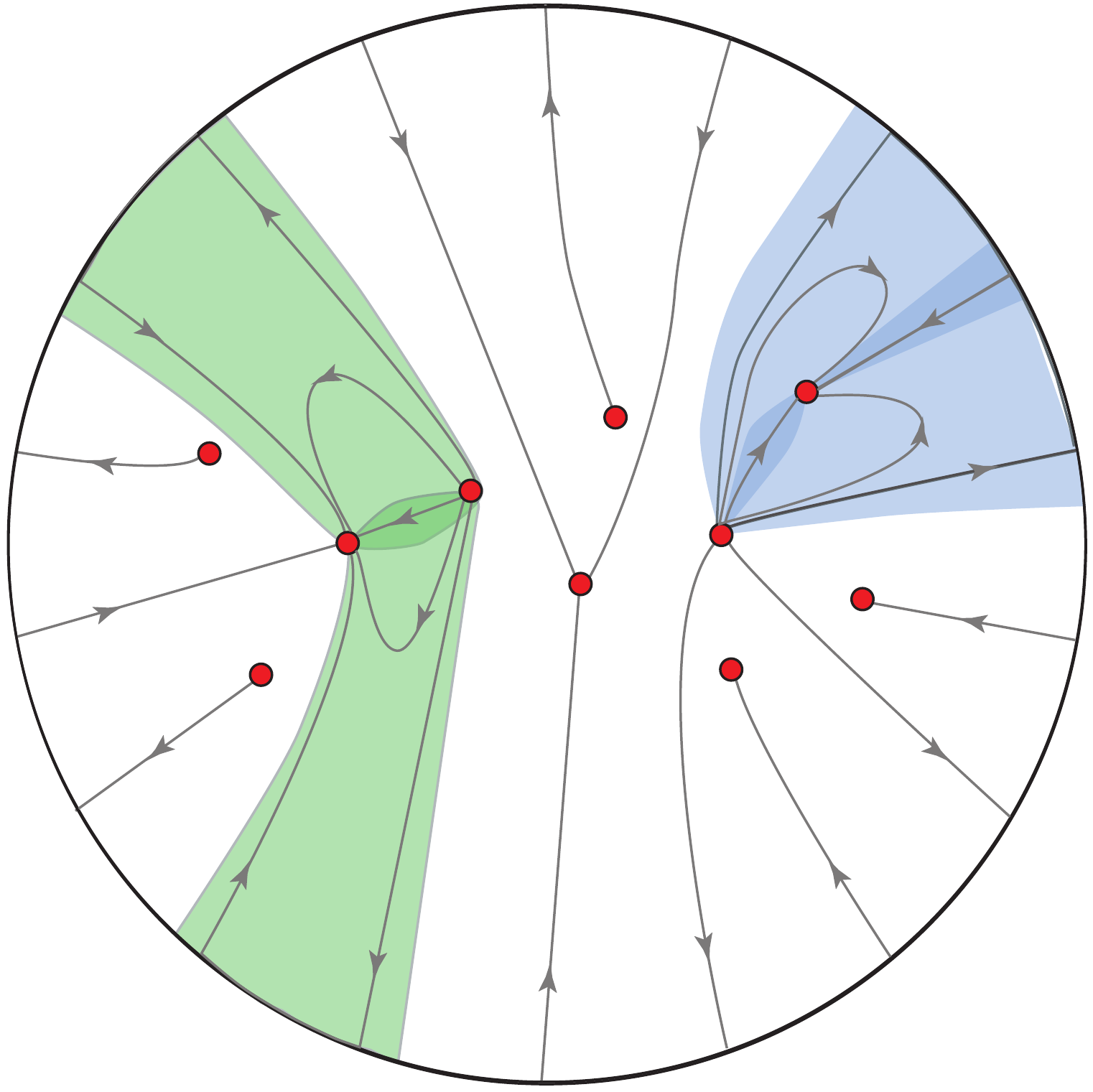}} \caption{Two connected regions determined by the separatrix graph $iP_\eps(z)\frac{\partial}{\partial z}$. }\label{fig_skeleton_sectors}\end{center}\end{figure}

\subsection{The sectoral domains in parameter space}

We want to describe the orbit space of a germ $f_\eps$ and that of $g_\eps=f_{\ov{\eps}}\circ f_\eps$. Since $g_\eps$ is close to the time-one map of $P_\eps(z)\frac{\partial}{\partial z}$, it is natural, to capture the orbits, to look at a transversal direction to the flow of $P_\eps(z)\frac{\partial}{\partial z}$, and the most natural direction is the perpendicular direction. 

We consider the intersection of the regions bounded by the separatrices of $iP_\eps(z)\frac{\partial}{\partial z}$ with the disk $\D_r$. 
The easy situation is when each intersection is connected. 
 
In that case any change of coordinate to the normal form on one of these regions of the disk in the sense of \eqref{change_nf} will be unique up to post-composition with some map $v_{\ov{\eps}}^t$ for some $t\in \R$. But these connected regions will have a disconnected limit when the two fixed points merge together. Hence, in order to have good limit properties we cut these regions into two (see Figure~\ref{fig_skeleton_sectors}(b)), using a trajectory linking the two singular points. The regions can be sectorially enlarged near the singular points to provide an open cover of $\D_r\setminus\{P_\eps(z)=0\}$ (see Figure~\ref{fig_skeleton_sectors}(c)).

The construction needs to be adapted when some intersections of the regions with $\D_r$  are disconnected. This occurs for instance when an eigenvalue at a singular point has a very small real part. Then some separatrix makes wide meandering before landing at a singular point (see Figure~\ref{meandering}). In that case we need to adapt the construction by taking the boundaries of the regions given by piecewise trajectories of vector fields $e^{i\alpha} P_\eps(z)\frac{\partial}{\partial z}$ for a finite number of real values of $\alpha$ bounded away from $\pi \Z$. In practice, this is done by changing to the time coordinate $t = \int\frac{dz}{P_\eps(z)}$ of the vector field $\frac{dz}{dt}=P_\eps(z)$. The regions will be infinite strips with piecewise linear boundaries. The bonus of this construction is that it can be extended for all non DES-generic parameter values as long as the fixed points are simple. Then we will be able to perform the construction everywhere on the complement of the discriminant set, i.e. on a region of complex codimension 1. 

\begin{definition}\label{def:sectoral_domain} A \emph{sectoral domain} is a simply connected domain in parameter space, which is an enlargement of a DES-stratum of the vector field $iP_\eps(z)\frac{\partial}{\partial z}$, on which it is possible to construct $2k$ sectors depending continuously on the parameter. \end{definition}
\begin{figure} \begin{center} \includegraphics[width=5cm]{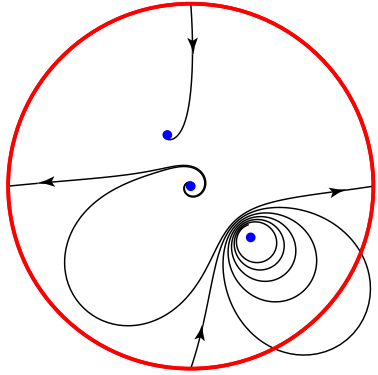}\caption{A separatrix of a polynomial vector field making wide meandering before landing at a singular point and cutting the disk into parts.}\label{meandering}\end{center} \end{figure}

\subsection{Sectors and translation domains}

\begin{definition} Let $F_{j,\eps}:= Z_{-j,\ov{\eps}}\circ f_\eps\circ Z_{j,\eps}^{-1}$ (resp. $G_{j,\eps}:= Z_{j,\eps}\circ g_\eps\circ Z_{j,\eps}^{-1}$) be the lifts of $f_\eps$ (resp. $g_\eps$) in the charts in time coordinate.
\end{definition}

Let  $\Omega_s$ be a sectoral domain. We denote by $S_{j,\eps,s}$, $j=0,\pm 1, \dots, \pm k$, where indices are $({\rm mod}\: 2k)$, the $2k$ sectors associated to $\Omega_s$ to be constructed. They are inverse images of translation domains $U_{j,\eps,s}$, $j=0,\pm 1, \dots, \pm k$ in the time coordinate, which are defined as follows.
We first consider the particular values of $\Omega_s$, for which all singular points of $iP_\eps(z)$ are nodes. 
For these values the holes in time space are all horizontal. Let $\eps\in \Omega_s$. It is known that $G_{j,\eps}$ is close to the translation by $1$, $T_1$ (see for instance Proposition 4.1 of \cite{R15}).
Let us take any vertical line $\ell_\eps$ to the left or right of principal hole in the chart $Z_{j,\eps}$ such that
\begin{enumerate}
\item there are no other holes between  $\ell_\eps$ and the principal hole,
\item the strip $B_{\ell_\eps}$ bounded by $\ell_\eps$ and $G_{j,\eps}(\ell_\eps)$  is included in the chart. \end{enumerate}
Then the translation domain $U_{j,\eps,s}$ associated to the chart $Z_{j,\eps}$ is the saturation of the strip $B_{\ell_\eps}$ by $G_{j,\eps}$ inside the chart. 
For the other values of $\eps\in \Omega_s$ we may take for $\ell_\eps$ any bi-infinite piecewise linear curve such that $\ell_\eps$ and $G_{j,\eps}(\ell_\eps)$ do not intersect, (1) and (2) above are satisfied, and $\ell_\eps$ depends continuously on $\eps$ (see Figure~\ref{strips}). 
\begin{figure}\begin{center}
\subfigure[]{\includegraphics[height=5cm]{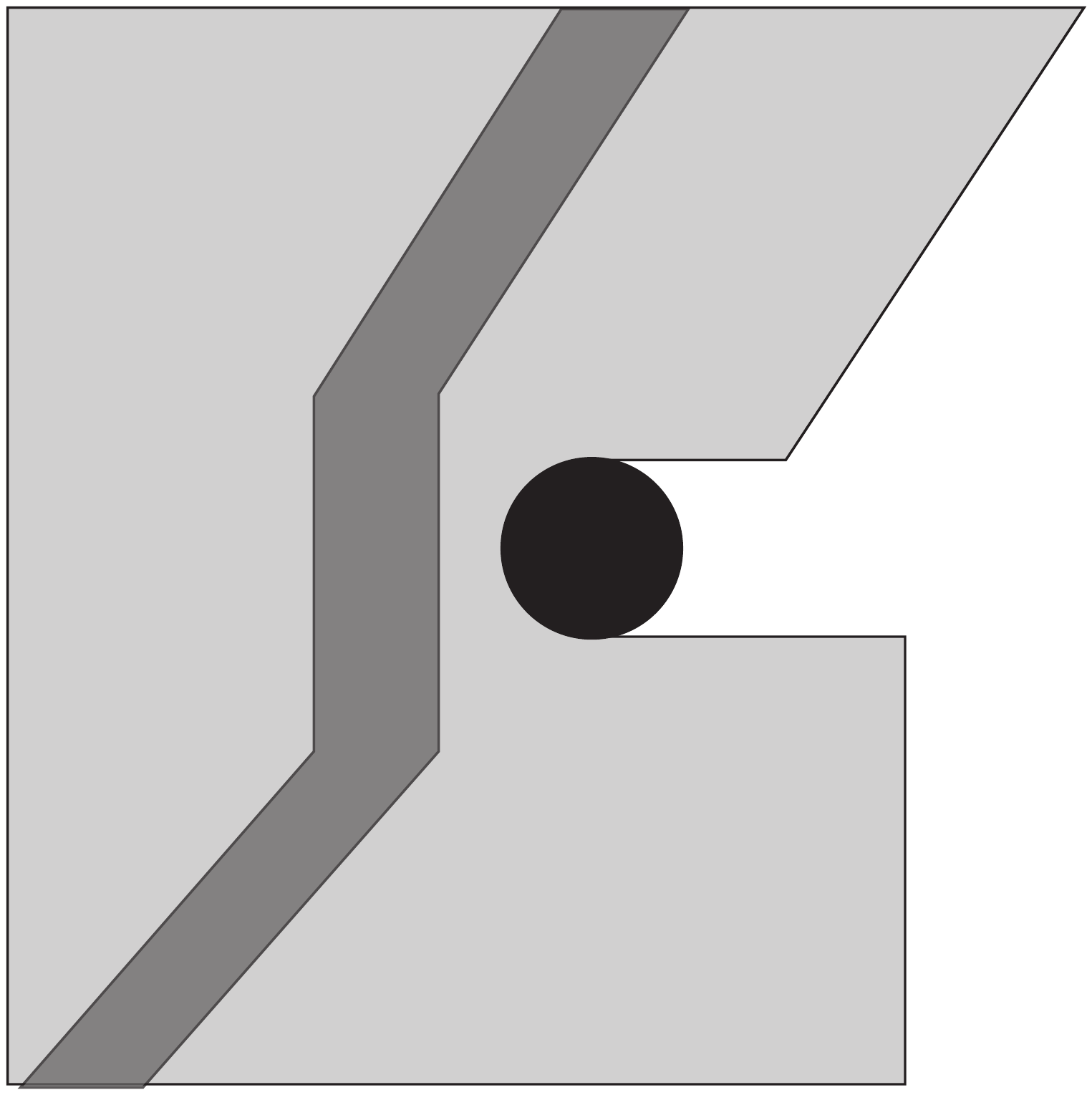}}\qquad\quad\subfigure[]{\includegraphics[height=5cm]{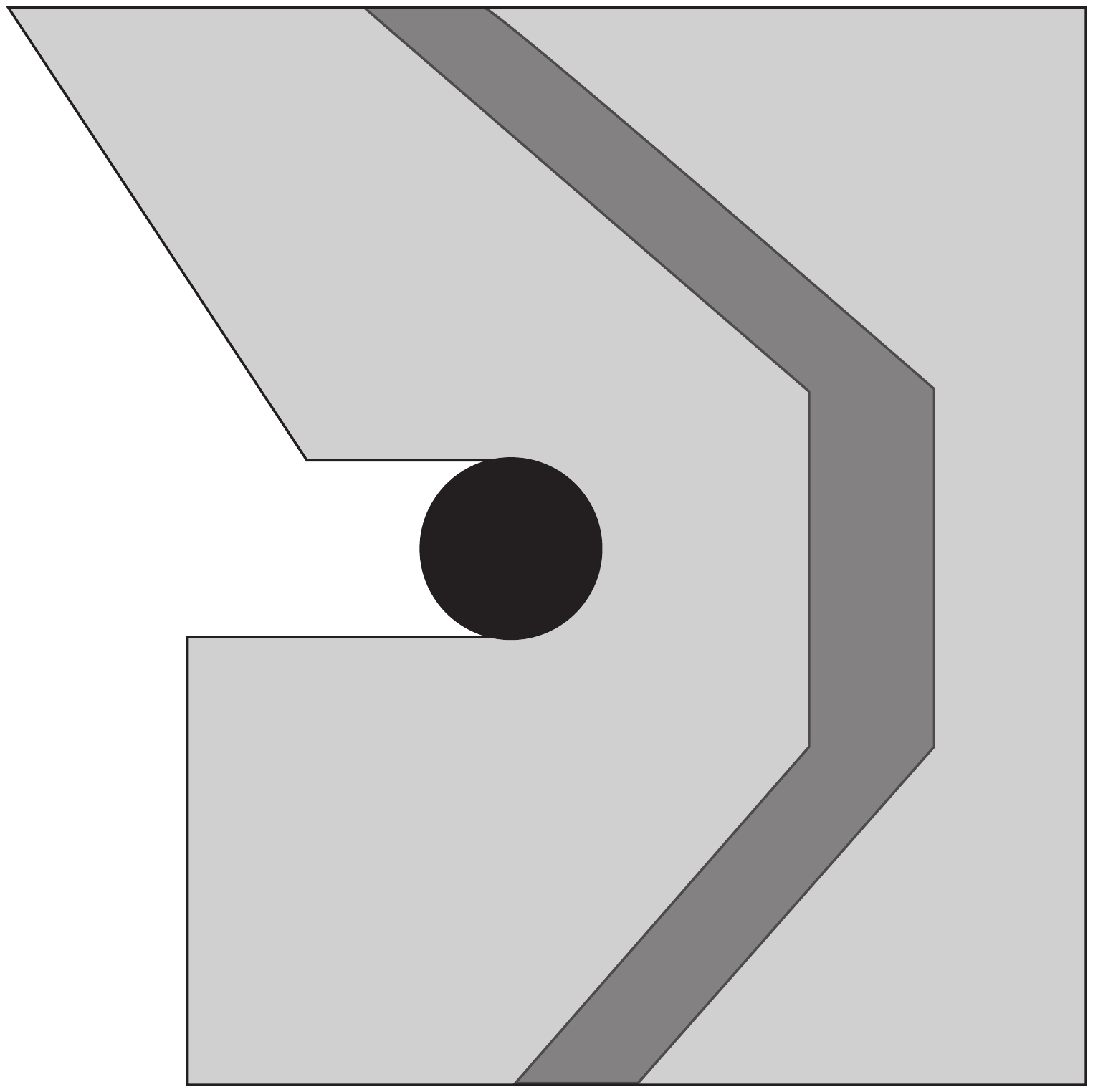}}\caption{Two strips on different sides of the fundamental hole. When there is a transition map, the slopes should be the same (bottom on the figure).}\label{strips}\end{center}\end{figure}

The sectors in $z$-space are simply $S_{j,\eps,s}=Z_{j,\eps}^{-1}(U_{j,\eps,s})$, $j=0, \pm 1, \dots, \pm k$ with indices $({\rm mod}\: 2k)$.

\subsubsection{Pairing sectoral domains} 
\begin{proposition} It is possible to cover the complement of the discriminant set in parameter space with $C(k)$ sectoral domains. The size of sectoral domains can be chosen so that the image of a sectoral domain under $\eps\mapsto \ov{\eps}$ is again a sectoral domain. Then sectoral domains can be either \begin{itemize}
\item invariant under $\eps\mapsto \ov{\eps}$,
\item or grouped by symmetric pairs. 
\end{itemize}\end{proposition}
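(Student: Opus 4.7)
The plan is to construct the $C(k)$ sectoral domains by enlarging the DES strata of \cite{DES05}, and then to use the action of $\sigma:\eps\mapsto\ov\eps$ on the combinatorial types to choose the enlargements either $\sigma$-equivariantly or in matching pairs. By Douady--Estrada--Sentenac, the complement in parameter space of the discriminant and of the real-codimension-one homoclinic hypersurfaces breaks into $C(k)$ connected strata, one for each combinatorial type of separatrix graph of $iP_\eps(z)\,\partial/\partial z$, each parametrized by the period vector in $\H^k$. Using the piecewise-trajectory construction described just above Definition~\ref{def:sectoral_domain}, i.e.\ by replacing pieces of true trajectories of $iP_\eps$ by pieces of trajectories of the auxiliary vector fields $e^{i\alpha}P_\eps(z)\,\partial/\partial z$ for a finite set of angles $\alpha$ avoiding $\pi\Z$, each DES stratum extends continuously across the homoclinic hypersurfaces to an open, simply-connected enlargement on which the $2k$ sectors remain well-defined. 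After suitable adjustments these $C(k)$ enlargements cover the full complement of the discriminant.

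To exploit complex conjugation, I would then observe that $\overline{iP_\eps(\ov z)}=-iP_{\ov\eps}(z)$, so $\sigma$ acting on $z$ conjugates $iP_\eps\,\partial/\partial z$ to the time-reversed field $-iP_{\ov\eps}\,\partial/\partial z$. Time reversal swaps the stable and unstable separatrices at infinity, and $\sigma$ reverses their cyclic order; the composition is an involution $\tau$ on the set of $C(k)$ combinatorial types, and the DES stratum of type $\alpha$ at parameter $\eps$ is mapped by $\sigma$ onto the DES stratum of type $\tau(\alpha)$ at parameter $\ov\eps$. If the set of auxiliary angles is chosen symmetric under $\alpha\mapsto-\alpha$ and the boundary curves $\ell_\eps$ are chosen so that $\sigma(\ell_\eps)=\ell_{\ov\eps}$, then the whole enlargement construction becomes equivariant under $\sigma$.

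It then remains to finish by combinatorial bookkeeping: for each fixed type $\alpha=\tau(\alpha)$, perform the enlargement $\sigma$-equivariantly, producing a $\sigma$-invariant sectoral domain; for each orbit $\{\alpha,\tau(\alpha)\}$ with $\alpha\neq\tau(\alpha)$, enlarge the stratum of type $\alpha$ freely and declare the sectoral domain of type $\tau(\alpha)$ to be its $\sigma$-image, which by the equivariance above is indeed an enlargement of the DES stratum of type $\tau(\alpha)$. The main obstacle I anticipate is verifying that the continuous piecewise-trajectory extension across the homoclinic hypersurfaces can be carried out simultaneously in a $\sigma$-symmetric way on a single $\sigma$-invariant sectoral domain: the boundaries $\ell_\eps$ must transform correctly with $\eps$ while remaining admissible for the saturation construction (no intersection with $G_{j,\eps}(\ell_\eps)$, and no extraneous holes trapped between $\ell_\eps$ and the principal hole). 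This is a careful verification rather than a conceptual obstruction, once the angle set $A=-A$ and the curves $\ell_\eps$ are chosen $\sigma$-equivariantly.
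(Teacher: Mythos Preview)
Your proposal is correct and follows the same line as the paper's proof. The paper's own argument is in fact much terser: it simply cites \cite{R15} for the construction of the $C(k)$ sectoral domains covering the complement of the discriminant, and for the symmetry statement observes only that the coefficients of $P_\eps$ are real for real $\eps$ (equivalently $\overline{P_\eps(z)}=P_{\ov\eps}(\ov z)$, which is exactly what underlies your computation $\overline{iP_\eps(\ov z)}=-iP_{\ov\eps}(z)$); your equivariant-enlargement description and the pairing according to whether $\tau(\alpha)=\alpha$ just make this reasoning explicit.
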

\begin{proof} The proof can be found in \cite{R15}. The last property comes from the fact that the coefficients of $P_\eps(z)$ are real for real $\eps$. \end{proof}

If $\Omega_s$ is a sectoral domain, then we denote by $\Omega_{\ov{s}}:=\ov{\Omega_s}$ its symmetric image. This yields an involution on the set of indices, which we denote by $s\mapsto \ov{s}$.

\subsection{The Fatou coordinates}
\begin{proposition}[Definition of Fatou Coordinates]\label{prop:fatou}
 Let $f_\eps$ be a prepared germ of  type \eqref{family_prepared}. Let $F_{j,\eps}$ be the lift of $f_\eps$ in the time
  coordinate $Z_{j,\eps}$. Then for all sectoral domains $\Omega_s$, if  
      $$
        Q_{j,s} = \bigcup_{\eps\in \Omega_s\cup\{0\}} \{\eps\}\times U_{j,\eps,s},
        $$
  $j=0, \pm 1, \dots, \pm k$,    then   there exists families 
      $\{\Phi_{j,\eps,s}\}_{\eps\in\Omega_s\cup\{0\}}$       of Fatou coordinates of $f_\eps$ defined on $Q_{j,s}$ such that \begin{itemize}
  \item  
    \begin{equation}\label{eq:F STt}
    \Phi_{-j,\ov{\eps}, \ov{s}}\circ F_{j,\eps}\circ (\Phi_{j,\eps,s})^{-1}      = \STt;    \end{equation}
  \item $ \Phi_{j,\eps, s}$ is holomorphic on ${\rm int}(Q^{j,s})$ with continuous limit at $\eps=0$ independent of $s$, i.e.~
    $$
      \lim_{\eps\to 0\atop \eps\in \Omega_s} 
      \Phi_{j,\eps,s} = \Phi_{j,0},
    $$
    where the convergence is uniform on compact sets
    and $\Phi_{j,0}$ is a Fatou coordinate of $f_0$
    on $U_{j,0}$; 
    \item The families are uniquely determined
    by
    \begin{equation}\label{eq:determine Phi}
      \ov{\Phi_{-j,\ov{\eps},\ov{s}}(X_{-j,\ov{\eps},\ov{s}})} + \Phi_{j,\eps,s}(X_{j,\eps,s}) 
        = C_{j,\eps,s},
    \end{equation}
    where $X_{j,\eps,s}\in U_{j,\eps,s}$ and $X_{-j,\ov{\eps},\ov{s}}$ are base points, $\sigma\circ C_{j,\eps,s}=C_{-j,\ov{\eps},\ov{s}}$, and both $X_{j,\eps,s}$ and $C_{j,\eps,s}$ are holomorphic in $\eps\in \Omega_s$ with continuous limit at $\eps=0$.
\end{itemize}
\end{proposition}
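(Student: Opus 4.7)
The plan is to bootstrap from the existing theory of Fatou coordinates for the holomorphic second iterate. By Lemma~\ref{lem:complexify}, $g_\eps=f_{\ov{\eps}}\circ f_\eps$ is a generic holomorphic unfolding of a codimension $k$ holomorphic parabolic germ, so the construction of \cite{R15} applied to $g_\eps$ yields, on each translation domain $Q_{j,s}$, a family of Fatou coordinates $\Psi_{j,\eps,s}$ conjugating the lift $G_{j,\eps}$ to $T_1$, holomorphic in the interior, continuous at $\eps=0$, and unique up to post-composition with a translation. I would take this family as the starting point.

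The key algebraic observation is that $(\STt)^{\circ 2}=T_1$, so any $\Phi_{j,\eps,s}$ satisfying \eqref{eq:F STt} is automatically a Fatou coordinate for $G_{j,\eps}$, hence necessarily of the form $\Phi_{j,\eps,s}=\Psi_{j,\eps,s}+c_{j,\eps,s}$ for some constant. Conversely, starting from any $\Psi_{j,\eps,s}$ I would \emph{define}
$$\Phi_{-j,\ov{\eps},\ov{s}}(w):=\STt\bigl(\Psi_{j,\eps,s}(F_{j,\eps}^{-1}(w))\bigr)$$
on the image $F_{j,\eps}(U_{j,\eps,s})$. A short computation using $(\STt)^{\circ 2}=T_1$ shows that this candidate conjugates $G_{-j,\ov{\eps}}$ to $T_1$ on its domain of definition, hence extends uniquely by iteration to all of $U_{-j,\ov{\eps},\ov{s}}$; by uniqueness of Fatou coordinates for $g_\eps$, this extension coincides with $\Psi_{-j,\ov{\eps},\ov{s}}$ up to an additive constant. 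This yields a valid pair $(\Phi_{j,\eps,s},\Phi_{-j,\ov{\eps},\ov{s}})$ for each choice of the single residual complex constant, and the construction depends holomorphically on $\eps$ in the interior because the Fatou coordinates for $g_\eps$, the lift $F_{j,\eps}$, and the antiholomorphic dependence of $f_\eps$ on $\eps$ all do.

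To eliminate the last degree of freedom I would invoke \eqref{eq:determine Phi}: shifting $\Phi_{j,\eps,s}$ by $c$ forces, via the $\STt$-intertwining, a corresponding shift of $\Phi_{-j,\ov{\eps},\ov{s}}$ by the image of $c$ under the antiholomorphic identification encoded by $\Sigma$, and the left-hand side of \eqref{eq:determine Phi} thereby becomes an explicit injective affine function of $c$ whose zero locus is a single point. The hypothesis $\sigma\circ C_{j,\eps,s}=C_{-j,\ov{\eps},\ov{s}}$ on the base data is precisely what is needed for the normalizations attached to the paired indices $(j,\eps,s)$ and $(-j,\ov{\eps},\ov{s})$ to be mutually consistent, the two equations being complex conjugates of each other. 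Continuity at $\eps=0$ then transfers from the analogous property of $\Psi_{j,\eps,s}$ together with the continuous dependence of $F_{j,\eps}$, $X_{j,\eps,s}$, and $C_{j,\eps,s}$ on $\eps$. The main technical obstacle I anticipate is the bookkeeping of which Riemann-surface sheet of the time coordinate every object lives on --- in particular, verifying that the iterative extension of $\Phi_{-j,\ov{\eps},\ov{s}}$ sweeps out exactly $U_{-j,\ov{\eps},\ov{s}}$ rather than a translate on a neighbouring sheet --- a point that hinges on the matching of sectoral domains under the involution $s\mapsto\ov{s}$ guaranteed by the preceding pairing proposition.
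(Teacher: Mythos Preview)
Your proposal is correct and follows essentially the same approach as the paper: start from Fatou coordinates $\widetilde{\Phi}_{j,\eps,s}$ for the holomorphic second iterate $G_{j,\eps}$ supplied by \cite{R15}, observe that the lift of $f_\eps$ in these coordinates is $\Sigma\circ T_{A_{j,\eps,s}}$ for some constant $A_{j,\eps,s}$ (equivalently, that any $\STt$-intertwining pair differs from $(\widetilde{\Phi}_{j,\eps,s},\widetilde{\Phi}_{-j,\ov\eps,\ov s})$ by translations), and then fix the residual freedom by the normalization \eqref{eq:determine Phi}. The paper's presentation is slightly more symmetric---it corrects both $\widetilde{\Phi}_{j,\eps,s}$ and $\widetilde{\Phi}_{-j,\ov\eps,\ov s}$ by half the defect $A$ rather than defining one from the other---but the content is the same.
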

\begin{proof} 
We take $\widetilde{\Phi}_{j,\eps,s}$ a Fatou coordinate for $G_{j,\eps}$ satisfying $\widetilde{\Phi}_{j,\eps,s}\circ G_{j,\eps} = T_1\circ \widetilde{\Phi}_{j,\eps,s}$ and depending analytically on $\eps$ with continuous limit at $\eps=0$. These are known to exist (see \cite{R15}). One way to achieve the required dependence on $\eps$ is to take a base point $X_{j,\eps,s}$ depending analytically on $\eps$ with continuous limit at $\eps=0$ independent of $s$ (a base point constant in $\eps$ and $s$ would work) and to ask that  $\widetilde{\Phi}_{j,\eps,s}(X_{j,\eps,s})=0$. 

Let $\widetilde{K}_{j,\eps,s}= \widetilde{\Phi}_{-j,\ov{\eps},\ov{s}}\circ F_{j,\eps}\circ (\widetilde{\Phi}_{j,\eps,s})^{-1}$. Then $\widetilde{K}_{j,\eps,s}$ is a diffeomorphism, which commutes with $T_1$. Quotienting by $T_1$, yields that $\widetilde{K}_{j,\eps,s}= \Sigma\circ T_{A_{j,\eps,s}}$. Moreover $\widetilde{K}_{-j,\epsbar,\ov{s}}\circ \widetilde{K}_{j,\eps,s} =T_1$, which yields $A_{j,\eps,s}+\ov{A_{-j,\ov{\eps},\ov{s}}}=1$. The result follows by letting $\Phi_{j,\eps,s} = T_{-\frac{\ov{A_{-j,\ov{\eps},\ov{s}}}}2}\circ \widetilde{\Phi}_{j,\eps,s}$ and $\Phi_{-j,\epsbar,\ov{s}} = T_{-\frac{\ov{A_{j,\eps,s}}}2}\circ \widetilde{\Phi}_{-j,\epsbar,\ov{s}}$ (details as in \cite{GR22}).

Moreover, other Fatou coordinates satisfying \eqref{eq:F STt} must have the form $T_{B_{j,\eps,s}}\circ\Phi_{j,\eps,s}$ with $B_{j,\eps,s}=\ov{B_{-j,\ov{\eps},\ov{s}}}$.
This changes $ C_{j,\eps,s}:=  \ov{\Phi_{-j,\ov{\eps},\ov{s}}(X_{-j,\ov{\eps},\ov{s}})} + \Phi_{j,\eps,s}(X_{j,\eps,s})$ to  $C_{j,\eps,s}+2 B_{j,\eps,s}$.
 \end{proof}
 
\subsection{Defining the modulus}

\begin{definition}\label{def:transition_functions} Let $f_\eps$ be a prepared germ of  type \eqref{family_prepared}, let $\Omega_s$ be a sectoral domain, and let  $\{\Phi_{j,\eps,s}\}_{\eps\in\Omega_s\cup\{0\}}$,   $j=0, \pm 1, \dots, \pm k$ be associated Fatou coordinates. The $2k$ \emph{associated transition functions} are the functions (see Figure~\ref{fleur4})
\begin{equation}\label{transition_functions}
\Psi_{\ell,\eps,s} = \begin{cases}\Phi_{\ell,\eps,s}\circ T_{-\sgn(\ell)\frac{i\pi b(\eps)}{k}}\circ (\Phi_{\ell-1, \eps, s})^{-1}, &\ell \:\text{odd},\\
\Phi_{\ell-1,\eps,s}\circ T_{\sgn(\ell)\frac{i\pi b(\eps)}{k}} \circ (\Phi_{\ell, \eps, s})^{-1}, &\ell \:\text{even},\end{cases}\end{equation}
$\ell=\pm1, \dots, \pm k$. \end{definition}
\begin{figure}\begin{center} \includegraphics[width=5cm]{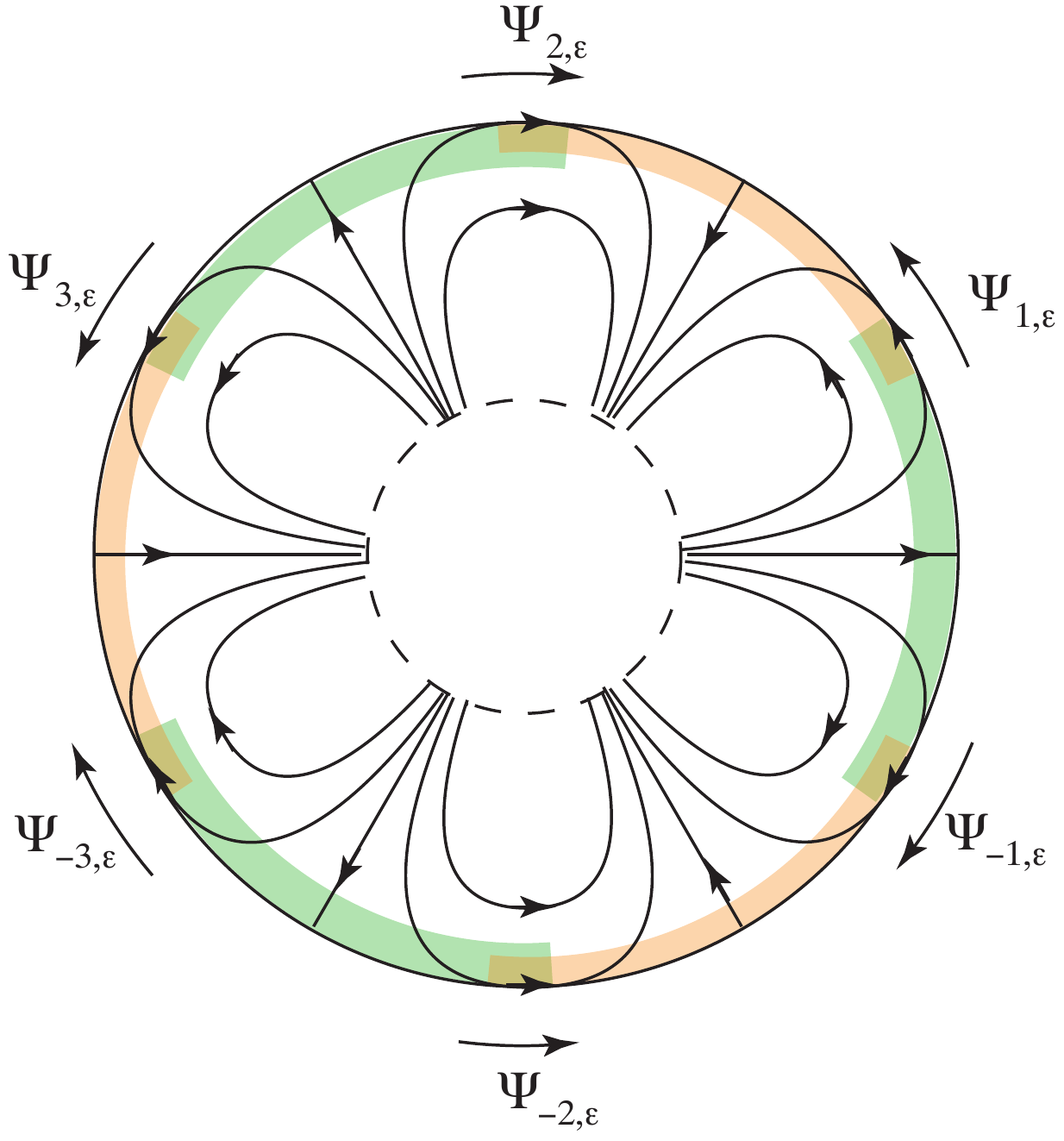}\caption{The transition functions.}\label{fleur4}\end{center}\end{figure}

\begin{proposition}
Let $f_\eps$ be a prepared germ of  type \eqref{family_prepared}, let $\Omega_s$ be a sectoral domain, and let  $\{\Psi_{\ell,\eps,s}\}_{\eps\in\Omega_s\cup\{0\}}$,   $\ell=\pm 1, \dots, \pm k$, be  associated transition functions. Then
\begin{enumerate}
\item $T_1\circ \Psi_{\ell,\eps,s}= \Psi_{\ell,\eps,s}\circ T_1$;
\item \begin{equation}\STt \circ \Psi_{\ell,\eps,s}=\Psi_{-\ell,\ov{\eps},\ov{s}}\circ \STt.
\label{eq_trans}\end{equation}
In particular, all transition functions are determined by the ones for $\ell>0$.
\item It is possible to choose Fatou coordinates so that the constant terms in the Fourier expansion of $\{\Psi_{\ell,\eps,s}\}_{\eps\in\Omega_s\cup\{0\}}$ are given by 
\begin{equation}\label{c_0} c_{\ell,\eps,s}=\sgn(\ell)(-1)^\ell\;\frac{i\pi b(\eps)}{k}.\end{equation} Such Fatou coordinates are called \emph{normalized} and the corresponding transition functions are also called \emph{normalized}.
\item If $\{\widetilde{\Psi}_{\ell,\eps,s}\}_{\eps\in\Omega_s\cup\{0\}}$,   $\ell=\pm 1, \dots, \pm k$, are other transition functions associated to other normalized Fatou coordinates, then 
there exist $B_{\eps,s}$ satisfying $B_{\eps,s}=\ov{B_{\ov{\eps},\ov{s}}}$ analytic in $\eps\in \Omega_s$ with continuous limit at $\eps=0$ such that 
\begin{equation}\label{equiv_psi} \widetilde{\Psi}_{\ell,\eps,s}= T_{-B_{\eps,s}}\circ \Psi_{\ell,\eps,s} \circ T_{B_{\eps,s}}.\end{equation} 
We say that the collections of normalized transition functions $\{\Psi_{1,\eps,s}, \dots,\allowbreak \Psi_{k,\eps,s}\}_{\eps\in\Omega_s\cup\{0\}}$ and $\{\widetilde{\Psi}_{1,\eps,s}, \dots, \widetilde{\Psi}_{k,\eps,s}\}_{\eps\in\Omega_s\cup\{0\}}$ are \emph{equivalent} and we write \begin{equation}\label{def:equiv}\{\Psi_{1,\eps,s}, \dots, \Psi_{k,\eps,s}\}_{\eps\in\Omega_s\cup\{0\}}\equiv \{\widetilde{\Psi}_{1,\eps,s}, \dots, \widetilde{\Psi}_{k,\eps,s}\}_{\eps\in\Omega_s\cup\{0\}}.\end{equation}
\item When $k$ is even, if $f_\eps$ is in prepared form and $L_{-1}(z)=-z$, then $\hat{f}_{\tilde{\eps}} =L_{-1}
 \circ f_\eps\circ L_{-1} $ is also in prepared form for the canonical parameter $\hat{\eps}$ defined in \eqref{canonical_odd}. Let $\Omega_{\hat{s}}$ be the image of $\Omega_s$ under the map $\eps\mapsto \hat{\eps}$. 
 If $\{\Psi_{\ell,\eps,s}\}_{\eps\in\Omega_s\cup\{0\}, \ell=1, \dots, k}$ are normalized transition functions for $f_\eps$ and $$\widehat{\Psi}_{\ell, \hat{\eps}, \hat{s}}= \Sigma\circ T_{\frac12}\circ \Psi_{k+1-\ell,\eps,s}\circ \Sigma\circ T_{-\frac12}$$  then $\{\widehat{\Psi}_{1, \hat{\eps}, \hat{s}}, \dots, \widehat{\Psi}_{k, \hat{\eps}, \hat{s}}\}_{\hat{\eps}\in\Omega_{\hat{s}}\cup\{0\}}$ are normalized transition functions for $\hat{f}_{\hat{\eps}}$.
 We write 
 \begin{equation}\left( \{\Psi_{1,\eps,s}, \dots,\Psi_{1,\eps,s} \}_{\eps\in\Omega_s\cup\{0\}}\right)\cong \left(\{\widehat{\Psi}_{1, \hat{\eps}, \hat{s}}, \dots, \widehat{\Psi}_{k, \hat{\eps}, \hat{s}}\}_{\hat{\eps}\in\Omega_{\hat{s}}\cup\{0\}}\right)\label{equiv2}\end{equation}
 \end{enumerate}
\end{proposition}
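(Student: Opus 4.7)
The central tool is the Fatou equation $\Phi_{-j,\ov{\eps},\ov{s}}\circ F_{j,\eps}=\STt\circ\Phi_{j,\eps,s}$ from Proposition~\ref{prop:fatou}, together with the chart-change relation $Z_{\eps,j}=Z_{\eps,j-1}-\frac{2\pi i b(\eps)}{k}$ of \eqref{relation:Z_j}, which yields the conjugacy $G_{j,\eps}=T_{-2\pi i b(\eps)/k}\circ G_{j-1,\eps}\circ T_{2\pi i b(\eps)/k}$ between consecutive lifts of the second iterate. Composing the Fatou equation with itself via $G_{j,\eps}=F_{-j,\ov{\eps}}\circ F_{j,\eps}$ and $\STt\circ\STt=T_1$ shows that each $\Phi_{j,\eps,s}$ is simultaneously a Fatou coordinate of the holomorphic second iterate, conjugating $G_{j,\eps}$ to $T_1$.

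For part (1), substituting this second-iterate Fatou relation into $T_1\circ\Psi_{\ell,\eps,s}$ and $\Psi_{\ell,\eps,s}\circ T_1$ respectively and then using the chart-change conjugacy above reduces the desired commutation to an identity that matches the specific translation $T_{\mp\sgn(\ell)\pi i b(\eps)/k}$ prescribed in Definition~\ref{def:transition_functions}. For part (2), I would replace $\STt\circ\Phi_{\ell,\eps,s}$ by $\Phi_{-\ell,\ov{\eps},\ov{s}}\circ F_{\ell,\eps}$ inside $\STt\circ\Psi_{\ell,\eps,s}$ and push $\Sigma$ through the middle translation using $\Sigma\circ T_c=T_{\ov{c}}\circ\Sigma$ (which simultaneously flips the sign of $\sgn(\ell)$ to $\sgn(-\ell)$ and replaces $b(\eps)$ by $b(\ov{\eps})$); this produces the formula for $\Psi_{-\ell,\ov{\eps},\ov{s}}\circ\STt$ with the parity-swapped convention from Definition~\ref{def:transition_functions}.

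For part (3), the commutation in part (1) implies Fourier expansions $\Psi_{\ell,\eps,s}(Z)=Z+c_{\ell,\eps,s}+O(e^{-2\pi|{\rm Im}\,Z|})$, and $c_{\ell,\eps,s}$ depends affinely on the constants $C_{j,\eps,s}$ appearing in \eqref{eq:determine Phi}. The residual freedom in the Fatou system is exactly a family of translations $T_{B_{j,\eps,s}}$ subject to $B_{-j,\ov{\eps},\ov{s}}=\ov{B_{j,\eps,s}}$ (to preserve \eqref{eq:determine Phi}); under such a change each $c_{\ell,\eps,s}$ shifts by a linear combination of consecutive $B_{j,\eps,s}$, and one solves the resulting linear system to achieve the normalisation \eqref{c_0}. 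Part (4) is then immediate: two normalised Fatou systems must differ by a family $T_{B_{j,\eps,s}}$ of this type, and imposing that both yield the same normalised constants forces all $B_{j,\eps,s}$ to collapse to a single $B_{\eps,s}=\ov{B_{\ov{\eps},\ov{s}}}$, yielding \eqref{equiv_psi}.

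For part (5), a direct calculation confirms that $L_{-1}\circ f_\eps\circ L_{-1}$ is in prepared form with $P_{\hat{\eps}}(w)=-P_\eps(-w)$, producing the sign-alternating canonical parameter $\hat{\eps}$ of \eqref{canonical_odd}. The map $L_{-1}$ permutes the $2k$ charts by shifting angular indices by $k$ and commutes with $\sigma$; tracing this through the Fatou equation shows that $\widehat{\Phi}_{\ell,\hat{\eps},\hat{s}}$ is obtained from $\Phi_{k-\ell,\eps,s}$ by appropriate pre- and post-composition with $\STt$, and substitution into \eqref{transition_functions} produces the stated formula. I expect the main technical hurdle to lie in part (3): the sign pattern $\sgn(\ell)(-1)^\ell$ in \eqref{c_0} is not a convention but is forced simultaneously by the $\STt$-symmetry of part (2) and the parity-split definition of $\Psi_\ell$, and verifying that the $\STt$-constrained translations $B_{j,\eps,s}$ provide exactly the right degrees of freedom to pin down the $c_{\ell,\eps,s}$ with this prescribed pattern requires a careful case analysis across the $2k$ charts.
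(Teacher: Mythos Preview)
The paper does not give a proof of this proposition at all: the statement is followed only by Remark~\ref{rem:constants}, which notes that the normalised constants \eqref{c_0} agree with the chart increments \eqref{relation:Z_j}, and then moves on to the definition of the modulus. Presumably the verification is regarded as a routine combination of the holomorphic codimension-$k$ machinery from \cite{R15} with the antiholomorphic symmetry already packaged into Proposition~\ref{prop:fatou} (the codimension-$1$ analogue is carried out in \cite{GR22}).

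Your plan therefore supplies considerably more than the paper does, and its architecture is correct. Two remarks. First, for part~(1) you can streamline: once you have observed that each $\Phi_{j,\eps,s}$ is also a Fatou coordinate of $G_{j,\eps}$, the commutation of $\Psi_{\ell,\eps,s}$ with $T_1$ reduces to the identity $G_{\ell}\circ T_c=T_c\circ G_{\ell-1}$ for the specific $c$ built into Definition~\ref{def:transition_functions}, and that is exactly the chart-change conjugacy you wrote down; you need nothing beyond $Z_{\eps,\ell}\circ Z_{\eps,\ell-1}^{-1}$ being a translation. Second, for part~(3) your instinct is right that this is the only place with real bookkeeping: the paper's Remark~\ref{rem:constants} is essentially a one-line hint that the target constants are forced by the chart geometry, and the freedom $\Phi_{j,\eps,s}\mapsto T_{B_{j,\eps,s}}\circ\Phi_{j,\eps,s}$ with the constraint $B_{-j,\ov{\eps},\ov{s}}=\ov{B_{j,\eps,s}}$ recorded at the end of the proof of Proposition~\ref{prop:fatou} is precisely what one solves against. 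Parts~(2), (4) and (5) are, as you say, direct substitutions using \eqref{eq:F STt}, the residual translation freedom, and the effect of $L_{-1}$ on the charts respectively.
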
 

\begin{remark}\label{rem:constants}
Note that the constant terms $c_{\ell,\eps,s}$ in \eqref{c_0} coincide precisely with the change of time coordinates $Z_{j,\eps}$ between the corresponding sectors in \eqref{relation:Z_j}.
\end{remark}

\begin{definition} Let $f_\eps$ be a prepared germ of  type \eqref{family_prepared}. \begin{enumerate} 
\item For $k$ odd, the modulus of $f_\eps$ is given by the $(kC(k)+3)$-tuple
\begin{equation} \mathcal{M}(f_\eps)=\left(k,\eps,b_\eps, \left(\{\Psi_{1,\eps,s}, \dots,\Psi_{k,\eps,s}\}_{\eps\in\Omega_s\cup\{0\}}\right)_s/\equiv \right), \label{modulus}\end{equation}
where $\{\Psi_{\ell,\eps,s}\}_{\eps\in\Omega_s\cup\{0\}}$   are the associated normalized transition functions to a sectoral domain $\Omega_s$. 
This is also the modulus of $f_\eps$ for $k$ even under conjugacy tangent to the identity.
\item For $k$ even, the modulus of $f_\eps$ is given by the quotient of $\mathcal{M}(f_\eps)$ by $\cong$: 
\begin{equation} \mathcal{N}(f_\eps)=\left(k,\eps,b_\eps, \left(\{\Psi_{1,\eps,s}, \dots,\Psi_{k,\eps,s}\}_{\eps\in\Omega_s\cup\{0\}}\right)_s/\equiv \right)/\cong, \label{modulus_even}\end{equation}
where 
\begin{align*}\begin{split}&\left(k,\eps,b_\eps, \left(\{\Psi_{1,\eps,s}, \dots,\Psi_{k,\eps,s}\}_{\eps\in\Omega_s\cup\{0\}}\right)_s/\equiv \right) \\&\qquad\qquad \cong \left(k,\hat{\eps},b_{\hat{\eps}}, \left(\{\widehat{\Psi}_{1,\hat{\eps},\hat{s}}, \dots,\widehat{\Psi}_{k,\hat{\eps},\hat{s}}\}_{\hat{\eps}\in\Omega_{\hat{s}}\cup\{0\}}\right)_{\hat{s}}/\equiv \right).\end{split}\end{align*}
\end{enumerate}
\end{definition}

\subsection{The classification theorem}

\begin{theorem}\label{class_thm}
Two prepared unfoldings of antiholomorphic parabolic germs  of  type \eqref{family_prepared} are analytically conjugate if and only if they have the same modulus. 
\end{theorem}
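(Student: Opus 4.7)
The proof splits into necessity and sufficiency, and only the latter is substantial.

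\emph{Necessity.} Let $(z,\eta)\mapsto(H(z,\eta),\phi(\eta))$ be a mix analytic conjugacy between two prepared families $f_{1,\eta}$ and $f_{2,\phi(\eta)}$. By Corollary~\ref{cor:canonical}, canonicity of the prepared parameters forces $\phi=\mathrm{id}$ (for $k$ even, the possibility $\phi(\eps)=\hat{\eps}$ is absorbed by pre-composing with $L_{-1}$ and is precisely the equivalence $\cong$ appearing in $\mathcal{N}$). The formal invariant satisfies $b_1\equiv b_2$, since $b$ is determined by the multipliers of $g_\eps$ at its fixed points, which are preserved under conjugacy. Passing to the time coordinates, $H$ lifts sectorially to biholomorphisms of the translation domains $U_{j,\eps,s}$ that intertwine $\STt$ with itself; such biholomorphisms must be translations modulo $T_1$, so they carry normalized Fatou coordinates of $f_{1,\eps}$ to normalized Fatou coordinates of $f_{2,\eps}$ up to a single $T_{B_{\eps,s}}$ with $B_{\eps,s}=\ov{B_{\ov\eps,\ov s}}$. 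This is exactly \eqref{equiv_psi}, so $\mathcal{M}(f_{1,\eps})=\mathcal{M}(f_{2,\eps})$.

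\emph{Sufficiency.} Assume the moduli agree. The plan is to reduce to the classification of generic analytic unfoldings of a codimension $k$ holomorphic parabolic germ established in \cite{R15}, applied to the squares $g_{i,\eps}=f_{i,\ov\eps}\circ f_{i,\eps}$, and then to upgrade the resulting conjugacy of the squares to a conjugacy of the antiholomorphic square roots. First I observe that in each time chart the lift of $g_\eps$ is $G_{j,\eps}=F_{-j,\ov\eps}\circ F_{j,\eps}$, and applying \eqref{eq:F STt} twice gives $\Phi_{j,\eps,s}\circ G_{j,\eps}\circ \Phi_{j,\eps,s}^{-1}=\STt\circ\STt=T_1$. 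Hence every Fatou coordinate of $f_\eps$ is also a Fatou coordinate of $g_\eps$, and by Remark~\ref{rem:constants} the R15-transition functions of $g_\eps$ coincide (up to antiholomorphic symmetry) with the $\Psi_{\ell,\eps,s}$ of Definition~\ref{def:transition_functions}. Thus equality of $(k,\eps,b_\eps)$ together with the equivalence $\equiv$ yields equality of the R15-moduli of $g_{1,\eps}$ and $g_{2,\eps}$ over $\eps\in(\C^k,0)$, and the theorem of \cite{R15} provides an analytic conjugacy $H_\eps$, tangent to the identity, with $H_\eps\circ g_{1,\eps}=g_{2,\eps}\circ H_\eps$.

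The main step, and the principal obstacle, is proving that this $H_\eps$ automatically conjugates the antiholomorphic maps themselves. I would reconstruct $H_\eps$ sectorially as
\begin{equation*}
H_{j,\eps,s}=(\Phi^{(2)}_{j,\eps,s})^{-1}\circ T_{B_{\eps,s}}\circ \Phi^{(1)}_{j,\eps,s},
\end{equation*}
where $B_{\eps,s}$ is the translation that witnesses $\equiv$ between the two normalized collections. The antiholomorphic symmetry built into \eqref{eq:determine Phi} and \eqref{eq_trans}, together with the normalization \eqref{c_0} imposed on both sides, forces $B_{\eps,s}=\ov{B_{\ov\eps,\ov s}}$; composing the defining equation of $H_{j,\eps,s}$ with \eqref{eq:F STt} for $f_{1,\eps}$ yields
\begin{equation*}
H_{-j,\ov\eps,\ov s}\circ F_{1,j,\eps}=F_{2,j,\eps}\circ H_{j,\eps,s},
\end{equation*}
which is the desired conjugacy in each time chart. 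The sectorial pieces glue across the $2k$ sectors of a fixed $\Omega_s$ and across the covering of parameter space by the $\Omega_s$ through the same coherence arguments used in \cite{R15}, producing a single $H_\eps$. Restriction to real $\eps$ (where $\ov\eps=\eps$) then gives the required mix analytic conjugacy of the real antiholomorphic families.

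Finally I would extend $H_\eps$ across the discriminant locus, where fixed and periodic points of $f_\eps$ collide. This set has complex codimension one in parameter space, and $H_\eps$ is uniformly bounded in its complement because it is tangent to the identity with bounded translation corrections $B_{\eps,s}$. Riemann's removable singularity theorem together with Hartogs' extension theorem, applied as in \cite{R15} and \cite{MRR}, then produces the analytic extension to a full neighborhood of $\eps=0$ in $\R^k$, completing the proof.
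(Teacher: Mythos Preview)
Your approach is essentially the paper's: build the conjugacy sectorially as $(\Phi^{(2)}_{j,\eps,s})^{-1}\circ T_{B_{\eps,s}}\circ\Phi^{(1)}_{j,\eps,s}$ and use the symmetry $B_{\eps,s}=\ov{B_{\ov\eps,\ov s}}$ (built into the equivalence $\equiv$) to see that it intertwines the lifts $F_{j,\eps}$ and not merely $G_{j,\eps}$. The paper simply chooses representatives with $B_{\eps,s}=0$ and writes the same formula, then refers to \cite{R15} for the gluing details, as you do.

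Two points where the paper is more explicit than your sketch. First, the gluing over \emph{gate sectors} (intersection components joining two singular points) is not covered by the transition-function identity~\eqref{eq_trans}; the paper argues separately that the translation across a gate sector is determined by the periods $2\pi i/\log g_\eps'(z_n)$, hence only by the formal normal form, so it is the same for both families. Second, when the modulus is trivial the commutant of $g_\eps$ is the full flow $g_\eps^{\,t}$, and the gluing of the $H_{\eps,s}$ across overlapping sectoral domains no longer follows from Proposition~\ref{prop:symmetries}; the paper fixes this by pinning the normalized Fatou coordinates to a common base point (e.g.\ $z=r$), forcing $t(\eps)\equiv 0$. Your appeal to ``the same coherence arguments used in \cite{R15}'' is correct in spirit, but you should be aware that both of these steps need to be spelled out, and the trivial-modulus case in particular requires the extra normalization.
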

\begin{proof} If two families are analytically conjugate, then they obviously have the same modulus. 
Conversely, suppose that two prepared families $f_\eps$ and $\tilde{f}_{\tilde{\eps}}$ have the same modulus. In the case $k$ odd, then $\eps=\tilde{\eps}$ by Corollary~\ref{cor:canonical}, and it is of course possible to suppose that their normalized transition functions are equal:
$\Psi_{\ell,\eps,s}= \widetilde{\Psi}_{\ell,\eps,s}$. 
When $k$ is even, the same is true, possibly after conjugating $\tilde{f}_{\tilde{\eps}}$ by $L_{-1}$, in which case the new canonical parameter becomes $\hat{\tilde{\eps}}= \eps$. 

Moreover, the Fatou coordinates have been chosen so that $\Psi_{\ell, 0,s}$ are independent of $s$. 
For $\eps\in \Omega_s$, a conjugacy is defined by 
$$H_{\eps,s}(z)= Z_{j,\eps}^{-1} \circ (\widetilde{\Phi}_{j,\eps,s})^{-1}\circ \Phi_{j,\eps,s}\circ Z_{j,\eps}, \qquad j=0, \pm1, \dots, \pm k,$$
where $\Phi_{j,\eps,s}$ and $\widetilde{\Phi}_{j,\eps,s}$ are the normalized Fatou coordinates of $f_\eps$ and $\tilde{f}_\eps$ respectively. 
We claim that $H_{\eps,s}$ is well defined over $\D_r$. Since the conjugacy we are constructing is also  a conjugacy between $g_\eps=f_{\ov{\eps}}\circ f_\eps$ and $\tilde{g}_\eps=\tilde{f}_{\ov{\eps}}\circ \tilde{f}_\eps$, and since full details have been given for the latter case in  \cite{R15}, we explain the ideas and skip some details. The intersection of two sectors has connected components of two forms (see Figure~\ref{ssecteurs}):
\begin{itemize}
\item subsectors from one fixed point of $g_\eps$ to the boundary: on such a subsector the result follows from \eqref{eq_trans}.
\item subsectors joining two singular points, sometimes called \emph{gate sectors} (the name comes from \cite{O}). The transition map between Fatou coordinates over a gate sector is a translation. The normalization of a transition map is such that this translation depends only on the normal form. Indeed, crossing a gate sector like along the blue line in Figure~\ref{skeleton_time} is the same as  turning aroung the singular points on one side of the blue line or on  the other side (of course in the appropriate direction) and taking into account the changes of time \eqref{relation:Z_j} from one sector to the next. And the period of a singular point $z_n$ is $\frac{2\pi i}{g_\eps'(z_n)}= \frac{2\pi i}{\tilde{g}_\eps'(z_n)}$. Hence the translation given by the transition over of a gate sector is the same for $f_\eps$ and for $\tilde{f}_\eps$.
\end{itemize}
\begin{figure}\begin{center}
\includegraphics[width=6cm]{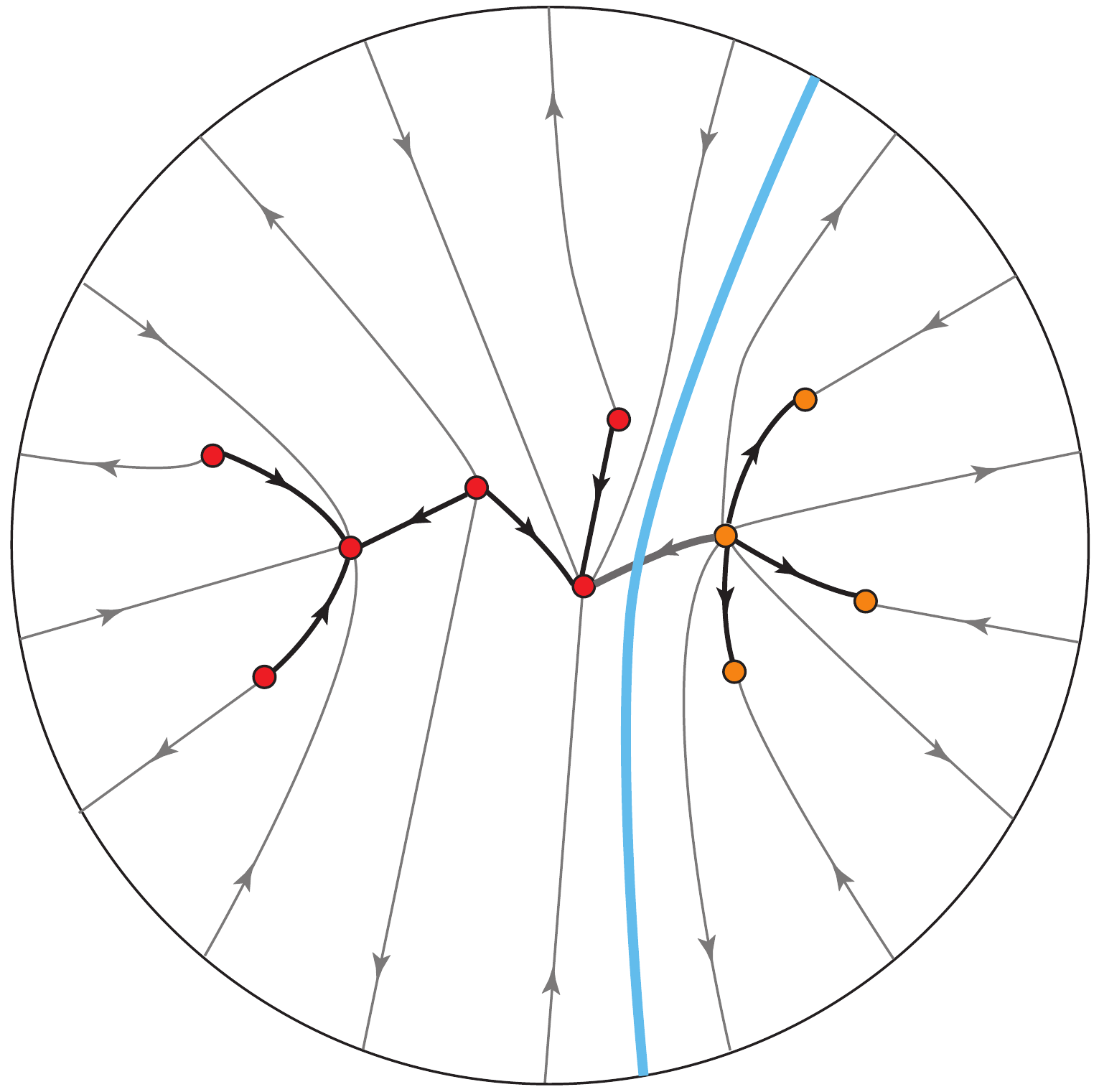}\caption{The change of time of the crossing of a gate sector (in gray) from top to bottom along the blue line is the same as the change of time when turning around the singular points on the left in the positive direction, or turning around the singular points on the right in the negative direction and, in both cases, taking also into account the changes of time \eqref{relation:Z_j} from one sector to the next.}\label{skeleton_time}\end{center}\end{figure}

Now, suppose that $\Omega_s\cap \Omega_{s'}\neq \emptyset$. Then $H_{\eps,s'}^{-1} \circ H_{\eps,s}$ commutes with $g_\eps$, and is equal to the identity for $\eps=0$.  If the modulus is non trivial (i.e. not all transition functions are identically translations), then $H_{\eps,s'}^{-1} \circ H_{\eps,s} = g^{\circ \frac{m}{n}}$ for some nonzero $n$ independent of $\eps$ by Proposition~\ref{prop:symmetries} below.  Since $H_{0,s'}^{-1}\circ H_{0,s}={\rm id}$ because the $\Psi_{\ell, 0,s}$ are independent of $s$, then $m=0$ and the $H_{\eps,s}$ are analytic extensions of each other when $s$ varies and yield a uniform bounded conjugacy  $H_\eps$ outside the parameter values in the discriminant set. Hence the conjugacy can be analytically extended to the discriminant set. 

If the modulus is trivial, then the $H_{\eps,s}$ need to be corrected before being glued in a uniform way. Indeed $H_{\eps,s'}^{-1} \circ H_{\eps,s} = g^{\circ t(\eps)}$ for some real $t(\eps)$ which has the property that $t(0)=0$. We want to modify the normalized Fatou coordinates so as to force that $t(\eps)=0$. This is done by choosing normalized Fatou coordinates with one fixed base point, for instance $z=r$ (resp. $z=r'$) for $\Phi_{0,\eps,s}$ (resp. $\widetilde{\Phi}_{0,\eps,s}$). Then $g^{\circ t(\eps)}(r)=r$, which yields $t(\eps)=0$ since $t$ is continuous and $t(0)=0$. \end{proof} 

The following proposition is well known (see for instance \cite{R15}).

\begin{proposition}\label{prop:symmetries} 
Let $g_\eps$ be an unfolding of a holomorphic parabolic germ. 
Then 
\begin{enumerate}
\item either $g_\eps$ is conjugate to the normal form $v_\eps^1$ and any holomorphic family of diffeomorphisms $h_\eps$ commuting  with $g_\eps$ has the form $h_\eps= g_\eps^{\circ \alpha(\eps)}$ for $\alpha(\eps)$ analytic;
\item or there exists $q\in \N*$ such that any holomorphic family of diffeomorphisms $h_\eps$ commuting  with $g_\eps$ has the form $h_\eps= g_\eps^{\circ \frac{p}{q}}$ for some $p\in \Z$. In particular if $
\lim_{\eps\to 0} h_\eps = {\rm id}$, then $h_\eps\equiv {\rm id}$. 
\end{enumerate} 
\end{proposition}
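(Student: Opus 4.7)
The plan is to lift the commutation relation to the time coordinate. Since each Fatou coordinate $\Phi_{j,\eps,s}$ conjugates $g_\eps$ to $T_1$ on its translation domain $U_{j,\eps,s}$, any holomorphic family $h_\eps$ commuting with $g_\eps$ produces lifts $\widetilde h_{j,\eps,s}=\Phi_{j,\eps,s}\circ h_\eps\circ \Phi_{j,\eps,s}^{-1}$ that commute with $T_1$. Hence $\widetilde h_{j,\eps,s}(w)=w+\phi_{j,\eps,s}(w)$ with $\phi_{j,\eps,s}$ holomorphic and $1$-periodic in $w$.

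The next step is to show that each $\phi_{j,\eps,s}$ is a single constant $\alpha(\eps)$ independent of $j$ and $s$. Near each fixed point $z_n(\eps)$ of $g_\eps$, with $g$- and $h$-multipliers $\mu_n(\eps)$ and $\nu_n(\eps)$, the Fatou coordinate is asymptotic to $\frac{1}{\log\mu_n}\log(z-z_n)$, so $\widetilde h_{j,\eps,s}(w)-w\to \log\nu_n/\log\mu_n$ as $w$ tends to the end of $U_{j,\eps,s}$ corresponding to $z_n$. Hence $\phi_{j,\eps,s}$ is bounded on its strip with finite limits at both ends. Applying the compatibility $\widetilde h_{j,\eps,s}\circ \Psi_{\ell,\eps,s}=\Psi_{\ell,\eps,s}\circ \widetilde h_{j-1,\eps,s}$ on each gate and comparing Fourier coefficients, one checks that these asymptotic ratios must coincide for every $n$ and that the nonconstant Fourier modes of $\phi_{j,\eps,s}$ must vanish, so every $\phi_{j,\eps,s}$ reduces to the same constant $\alpha(\eps)$.

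Substituting $\widetilde h_{j,\eps,s}=T_{\alpha(\eps)}$, the compatibility becomes $\Psi_{\ell,\eps,s}(w+\alpha)=\Psi_{\ell,\eps,s}(w)+\alpha$, so that the Fourier expansion $\Psi_{\ell,\eps,s}(w)=w+c_{0,\ell,s}(\eps)+\sum_{n\neq 0} c_{n,\ell,s}(\eps)e^{2\pi inw}$ must satisfy $c_{n,\ell,s}(\eps)(e^{2\pi in\alpha(\eps)}-1)=0$ for every $n\neq 0$. If all nonzero-order coefficients vanish identically in $\eps$, the modulus is trivial and $g_\eps$ is analytically conjugate to the formal normal form $v_\eps^1$; then $\alpha$ is unconstrained and any analytic $\alpha(\eps)$ produces $h_\eps= g_\eps^{\circ \alpha(\eps)}=v_\eps^{\alpha(\eps)}$, establishing case (1). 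Otherwise, setting $q=\gcd\{n\neq 0: c_{n,\ell,s}\not\equiv 0\text{ for some }\ell,s\}$, the constraint forces $\alpha(\eps)\in \frac{1}{q}\Z$ on the open dense set where the generating coefficients are nonzero; analyticity of $\alpha$ and the discreteness of $\frac{1}{q}\Z$ then force $\alpha\equiv p/q$ constant on each connected component, giving $h_\eps=g_\eps^{\circ p/q}$ and establishing case (2). If in addition $\lim_{\eps\to 0}h_\eps={\rm id}$, then $\alpha(0)=0$, so $p=0$ and $h_\eps\equiv {\rm id}$.

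The hardest step is the second one: proving that the $1$-periodic perturbations $\phi_{j,\eps,s}$ collapse to a common constant. A priori any holomorphic function of $e^{2\pi iw}$ on a punctured disk gives an admissible periodic correction; it is only the global matching across all $2k$ sectors of a sectoral domain together with the asymptotic multiplier data at the fixed points that rules out the nonconstant Fourier modes. This step parallels the centralizer analysis of \cite{R15}, and the conclusion is then extended across the discriminant set in parameter space by the usual Hartogs-type extension applied to the scalar analytic function $\alpha(\eps)$.
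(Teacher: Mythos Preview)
Your overall strategy coincides with the paper's: lift $h_\eps$ to each Fatou chart, argue that the lift is a pure translation $T_{\alpha(\eps)}$, and then read off the constraint on $\alpha$ from commutation with the transition maps $\Psi_{\ell,\eps,s}$. The paper's proof is far terser on the middle step: it simply asserts that a holomorphic diffeomorphism commuting with $T_1$ on a bi-infinite translation domain is of the form $T_{\alpha(\eps)}$. This is the standard fact that the $T_1$-quotient of the image of a Fatou coordinate is a twice-punctured sphere, and a holomorphic automorphism of $\mathbb{C}^*$ fixing both ends lifts to a translation.

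Your detour through the asymptotics at the fixed points is legitimate, but the way you close it has a soft spot. Once you have established that $\phi_{j,\eps,s}$ is $1$-periodic, holomorphic on the strip, and has finite limits at both vertical ends, you are already done: passing to $\mathbb{C}^*$ and applying Riemann's removable singularity theorem at $0$ and $\infty$ gives an entire function on $\mathbb{CP}^1$, hence a constant. No Fourier matching against the $\Psi_{\ell,\eps,s}$ is needed at this stage, and in fact the compatibility relation you write does not by itself kill the nonconstant modes of a single $\phi_{j,\eps,s}$; it only relates $\phi_j$ to $\phi_{j-1}$. The compatibility is what you need \emph{after} each $\phi_{j,\eps,s}$ is known to be a constant, in order to conclude that the constant is the same for all $j$ (compare the linear parts). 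A minor terminological slip: the transition maps $\Psi_{\ell,\eps,s}$ live over the intersection components touching $\partial\mathbb{D}_r$, not over the gate sectors (where the transitions are pure translations determined by the periods), so ``on each gate'' should read ``on each boundary intersection''.
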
 
\begin{proof} In each Fatou coordinate of $g_\eps$, then $h_\eps$ commutes with $T_1$, i.e. is of the form $T_{\alpha(\eps)}$. For $h_\eps$ to be uniformly defined over $\D_r$, then $T_{\alpha(\eps)}$ must commute with the transition functions. In case (1), the transition functions are translations and any translation commutes with them. In case (2), there is a maximum $q\in \N$ such $T_{\frac1q}$ commutes with the transition functions. Then $\alpha(\eps)= \frac{p}{q}$ is constant in $\eps$.  \end{proof}

\begin{corollary}
Two prepared families  of  type \eqref{family_prepared} are analytically conjugate under a conjugacy tangent to the identity if and only if their second iterates $g_\eps$ and $\tilde{g}_\eps$ are analytically conjugate under a conjugacy tangent to the identity.
\end{corollary}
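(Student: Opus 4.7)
The forward direction is immediate: a mix analytic conjugacy $H$ tangent to the identity between prepared families $f_\eps$ and $\tilde f_\eps$ automatically conjugates $g_\eps=f_{\epsbar}\circ f_\eps$ to $\tilde g_\eps=\tilde f_{\epsbar}\circ \tilde f_\eps$ once the parameter has been complexified as in Lemma~\ref{lem:complexify}, and the resulting conjugacy remains tangent to the identity.

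For the converse I would invoke Theorem~\ref{class_thm} and argue that a tangent-to-identity holomorphic conjugacy between $g_\eps$ and $\tilde g_\eps$ forces $\mathcal{M}(f_\eps)=\mathcal{M}(\tilde f_\eps)$. The scalar ingredients of the modulus match trivially: the canonical parameter and the formal invariant $b(\eps)$ are recoverable from $g_\eps$ alone (via the polynomial $P_\eps$, whose roots are the fixed points of $g_\eps$, and via $b(\eps)=\sum_s \frac1{\log G_\eps'(z_s)}$), and so are preserved by any tangent-to-identity conjugacy between the second iterates.

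For the transition functions, the key observation is that the normalized Fatou coordinates $\Phi_{j,\eps,s}$ of $f_\eps$ supplied by Proposition~\ref{prop:fatou} are simultaneously Fatou coordinates of $g_\eps$, since $(\STt)^{2}=T_{1}$. Hence the $\Psi_{\ell,\eps,s}$ appearing in $\mathcal{M}(f_\eps)$ coincide with the transition functions used in the modulus of $g_\eps$ from \cite{R15}, and analogously for $\tilde f_\eps$. Applying the classification theorem of \cite{R15} to the hypothesized holomorphic conjugacy between $g_\eps$ and $\tilde g_\eps$ produces, on each sectoral domain $\Omega_s$, an analytic translation parameter $B_{\eps,s}$ with $B_{0,s}=0$ and
\[
\widetilde\Psi_{\ell,\eps,s}\;=\;T_{-B_{\eps,s}}\circ \Psi_{\ell,\eps,s}\circ T_{B_{\eps,s}},\qquad \ell=\pm1,\dots,\pm k.
\]
Conjugation by a translation preserves the constant Fourier term, so the normalization \eqref{c_0} holds automatically on both sides.

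The crux, and the step I expect to require the most care, is showing that the translations $B_{\eps,s}$ produced by the purely holomorphic $g$-classification automatically satisfy the reality condition $B_{\ov\eps,\ov s}=\ov{B_{\eps,s}}$ demanded by \eqref{equiv_psi}. This is forced by the $\STt$-symmetry \eqref{eq_trans}, which is shared by both families. Using the identity $\STt\circ T_{-B}=T_{-\ov B}\circ \STt$, transporting the equivalence for index $\ell$ through $\STt$ yields an equivalence for index $-\ell$ on the sector $\ov s$ realized by the translation $\ov{B_{\eps,s}}$; matching this with the independently produced $B_{\ov\eps,\ov s}$ gives exactly the required relation. Combining all of the above, $\mathcal{M}(f_\eps)=\mathcal{M}(\tilde f_\eps)$, and Theorem~\ref{class_thm} produces the desired conjugacy (up to the $\cong$-identification when $k$ is even).
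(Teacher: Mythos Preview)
Your strategy coincides with the paper's: show that a tangent-to-identity conjugacy between the second iterates forces the antiholomorphic moduli to agree, and then invoke Theorem~\ref{class_thm}. The paper compresses this into two sentences (``use that $g_\eps$ and $\tilde g_\eps$ have representatives of the modulus satisfying \eqref{eq_trans}; then an equivalence constructed as in Theorem~\ref{class_thm} yields an equivalence between $f_\eps$ and $\tilde f_\eps$''), and your proposal spells out the mechanism behind that sentence.

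There is one step that does not go through exactly as you wrote it. You assert that the translation parameters produced by the holomorphic classification satisfy $B_{0,s}=0$; this is not justified---pulling back $\widetilde\Phi$ along the given conjugacy $K_\eps$ yields Fatou coordinates of $g_\eps$ that differ from $\Phi$ by some $B_{\eps,s}$ whose limit $B_0$ is a priori an arbitrary complex number (it is independent of $s$, since the Fatou coordinates of Proposition~\ref{prop:fatou} have $s$-independent limits, but not necessarily zero). Consequently your ``matching'' at the end only shows that $T_{\ov{B_{\eps,s}}-B_{\ov\eps,\ov s}}$ commutes with every $\Psi_{\ell,\ov\eps,\ov s}$, i.e.\ that $\ov{B_{\eps,s}}-B_{\ov\eps,\ov s}$ is a joint period, not that it vanishes. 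The repair is short: when the modulus is nontrivial the joint periods lie in $\tfrac1q\Z\subset\R$, and the difference is continuous in $\eps$ with value $\ov{B_0}-B_0\in i\R$ at $\eps=0$; a real number that is purely imaginary is zero, so $\ov{B_{\eps,s}}=B_{\ov\eps,\ov s}$ as required. (The trivial-modulus case is immediate, since then both $f_\eps$ and $\tilde f_\eps$ are conjugate to the normal form.) With this correction your argument is complete and matches the paper's intent.
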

\begin{proof} One direction is obvious. For the other direction, it is important to use that $g_\eps$ and $\tilde{g}_\eps$ have representatives of the modulus satisfying \eqref{eq_trans}. Then an equivalence between them constructed as in the proof of Theorem~\ref{class_thm} (and hence tangent to the identity) yields an equivalence between $f_\eps$ and $\tilde{f}_\eps$. \end{proof}

\begin{corollary}
A prepared family  of  type \eqref{family_prepared} is analytically conjugate to its normal form $\sigma_\circ v_\eps^{\frac12}$ if and only if all the transition maps $\Psi_{j,\eps,s}$ are translations. \end{corollary}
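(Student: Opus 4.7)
The plan is to reduce both implications to Theorem~\ref{class_thm} by computing the normalized transition functions of the normal form $\sigma\circ v_\eps^{\frac12}$ explicitly and showing they are pure translations. In each time chart $Z_{j,\eps}$, the vector field $v_\eps$ becomes the constant field $\partial/\partial Z$, so $v_\eps^{\frac12}$ becomes $\Tt$ and consequently $\sigma\circ v_\eps^{\frac12}$ is conjugated by $Z_{j,\eps}$ to $\STt$. Hence $Z_{j,\eps}$ is itself a Fatou coordinate for the normal form on each of the $2k$ translation domains; any other Fatou coordinate differs from it by an additive constant depending on $\eps$ and $s$. Plugging such translates into the definition \eqref{transition_functions} and using the relation $Z_{\ell,\eps}\circ Z_{\ell-1,\eps}^{-1}=T_{-2\pi i b(\eps)/k}$ from \eqref{relation:Z_j}, each $\Psi_{\ell,\eps,s}$ for the normal form is a composition of translations, hence a pure translation. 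By the normalization step of Proposition~\ref{prop:fatou}(3) together with Remark~\ref{rem:constants}, the additive constants on the Fatou coordinates can be (and must be) chosen so that $\Psi_{\ell,\eps,s}=T_{c_{\ell,\eps,s}}$ with $c_{\ell,\eps,s}$ as in \eqref{c_0}.

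For the forward direction, if $f_\eps$ is analytically conjugate to $\sigma\circ v_\eps^{\frac12}$, their moduli coincide, so every normalized transition collection for $f_\eps$ is equivalent in the sense of \eqref{equiv_psi} to the translations $T_{c_{\ell,\eps,s}}$. Since conjugating a translation by a translation returns the same translation, every $\Psi_{\ell,\eps,s}$ of $f_\eps$ is itself a translation. In the even-$k$ case, the additional symmetry $\cong$ in \eqref{equiv2} also sends translations to translations, since $\Sigma\circ T_{\frac12}\circ T_c\circ \Sigma\circ T_{-\frac12}$ is again a translation.

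For the converse, suppose each $\Psi_{\ell,\eps,s}$ of $f_\eps$ is a translation. The $T_1$-equivariance in property~(1) forces the translation to have no nonconstant Fourier modes, and the normalization of the constant term in property~(3) pins this translation down as $T_{c_{\ell,\eps,s}}$. This collection matches exactly the normalized transition functions of $\sigma\circ v_\eps^{\frac12}$ computed above, and both families share the same canonical parameter $\eps$ and the same formal invariant $b(\eps)$. Hence their moduli agree (as $\mathcal{M}$ for $k$ odd, and as $\mathcal{N}$ for $k$ even, since the $\cong$-class of a pure translation collection is manifestly preserved by \eqref{equiv2}), and Theorem~\ref{class_thm} produces the required analytic conjugacy to the normal form.

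The main technical obstacle I anticipate is verifying that $Z_{j,\eps}$ can be promoted to \emph{normalized} Fatou coordinates for the normal form in the sense of Proposition~\ref{prop:fatou}: one must exhibit base points $X_{j,\eps,s}$ and constants $C_{j,\eps,s}$ so that \eqref{eq:determine Phi} holds, and check that the $\Sigma$-symmetry of the time coordinates from Definition~\ref{def:Sigma} is compatible with the reality constraint $\sigma\circ C_{j,\eps,s}=C_{-j,\ov{\eps},\ov{s}}$. This compatibility is built in because $b(\eps)\in\R$ for real $\eps$, so the verification is essentially bookkeeping, after which the Fourier-uniqueness argument above closes the proof.
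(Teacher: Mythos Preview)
Your argument is correct and matches the spirit in which the paper presents the result: the corollary is stated there without proof, as an immediate consequence of the classification Theorem~\ref{class_thm}, and your approach---computing the modulus of the normal form by taking $\Phi_{j,\eps,s}=\mathrm{id}$ in the time charts and observing that the resulting transition functions are exactly the normalized translations $T_{c_{\ell,\eps,s}}$---is precisely the computation the paper leaves implicit. Your treatment of both directions via the equivalence relation $\equiv$ (and $\cong$ for $k$ even) is sound, and the bookkeeping you flag at the end is indeed routine given that $b(\eps)$ is real for real $\eps$ and the charts $Z_{j,\eps}$ satisfy the $\Sigma$-symmetry of Definition~\ref{def:Sigma}.
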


\section{Antiholomorphic parabolic unfolding with an invariant real analytic curve}

\subsection{The case $\eps=0$} 

This case has been studied in \cite{GR21}. Suppose that an antiholomorphic parabolic germ $f_0$ keeps invariant a germ of real analytic curve. This property is invariant under holomorphic conjugacy and can be read on the modulus. 
Indeed, modulo a conjugacy, we can suppose that $f_0$ preserves the real axis, and hence commutes with $\sigma$. This in turn implies that the transition maps satisfy
\begin{equation}\Sigma\circ \Psi_\ell= \Psi_{-\ell}\circ \Sigma.\label{cond:sym0}\end{equation}
Together with \eqref{eq_trans}, this yields that for all $\ell$
\begin{equation} T_{\frac12} \circ \Psi_\ell= \Psi_\ell\circ T_{\frac12},\label{cond_square-root0}\end{equation} which is precisely the condition for $g_0=f_0\circ f_0$ to have a holomorphic square root (see for instance \cite{I}). Indeed, this is natural since \eqref{cond:sym0} yields that $f_0$ commutes with $\sigma$ and then that $\sigma\circ f_0$ is a holomorphic square root of $g_0$. 

The converse is also true.

\begin{theorem} \cite{GR21} Let $f_0$ be antiholomorphic parabolic germ. We have the equivalences:
\begin{enumerate} 
\item $f_0$ keeps invariant a germ of real analytic curve.
\item $f_0$ is analytically conjugate to a germ with real coefficients.
\item The modulus of $f_0$ satisfies \eqref{cond:sym0}.
\item The modulus of $f_0$ satisfies \eqref{cond_square-root0}.\end{enumerate}\end{theorem}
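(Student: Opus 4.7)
The plan is to establish the circular chain $(1)\Leftrightarrow(2)\Rightarrow(3)\Leftrightarrow(4)\Rightarrow(2)$, with the last implication the main substantive step. For $(1)\Leftrightarrow(2)$ I would use straightening and Schwarz reflection: if $f_0$ preserves a germ of real analytic curve $\gamma$ through the fixed point, apply a local holomorphic change of coordinate sending $\gamma$ onto $\R$; writing $f_0=h\circ\sigma$ with $h$ holomorphic, invariance of $\R$ forces $h$ to send $\R$ to $\R$ locally, and Schwarz reflection gives $h$ real Taylor coefficients. The converse is immediate.

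For $(2)\Rightarrow(3)$, once $f_0$ has real coefficients we have $\sigma\circ f_0=f_0\circ\sigma$, so the singular points of $v_0$ together with the entire separatrix skeleton of $iP_0(z)\frac{\partial}{\partial z}$ are $\sigma$-symmetric, and the sectors $S_j$ and time charts $Z_j$ can be chosen with $\sigma(S_j)=S_{-j}$ and $\Sigma\circ Z_j=Z_{-j}\circ\sigma$. In this symmetric setting the construction of Proposition~\ref{prop:fatou} (at $\eps=0$) produces Fatou coordinates satisfying $\Sigma\circ\Phi_j=\Phi_{-j}\circ\sigma$, and substituting into the definition \eqref{transition_functions} of the transition maps gives $\Sigma\circ\Psi_\ell=\Psi_{-\ell}\circ\Sigma$.

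The equivalence $(3)\Leftrightarrow(4)$ is purely algebraic: combine the already established relation $\STt\circ\Psi_\ell=\Psi_{-\ell}\circ\STt$ (equation \eqref{eq_trans} at $\eps=0$) with the commutation $\Sigma\circ\Tt=\Tt\circ\Sigma$, which holds because $\frac12\in\R$, and cancel $\Sigma$ or $\Tt$ to pass between the two conditions.

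To close the cycle I would set $\tilde f_0:=\sigma\circ f_0\circ\sigma$, the antiholomorphic germ obtained by conjugating the Taylor coefficients of $f_0$. Its normalized Fatou coordinates can be taken as $\Sigma$-conjugates of those of $f_0$ on the $\sigma$-image sectors, and a short computation shows that its normalized transition maps are $\tilde\Psi_\ell=\Sigma\circ\Psi_{-\ell}\circ\Sigma$; condition $(3)$ then reads as the equality of the moduli of $f_0$ and $\tilde f_0$. The $\eps=0$ classification theorem from \cite{GR21} therefore provides a holomorphic $h$ tangent to the identity with $h\circ f_0\circ h^{-1}=\tilde f_0$, so that $\sigma\circ h$ commutes with $f_0$. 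The remaining task is to exhibit a holomorphic $H$ tangent to the identity solving $H=(\sigma\circ H\circ\sigma)\circ h$, whereupon $H\circ f_0\circ H^{-1}$ commutes with $\sigma$ and hence has real Taylor coefficients. I expect the main obstacle to be this last step, which amounts to solving a twisted coboundary equation inside the centralizer of $g_0$ described by Proposition~\ref{prop:symmetries}: the cocycle $h$ should represent the trivial class under the involution $h\mapsto\sigma\circ h^{-1}\circ\sigma$ precisely because of the $\Sigma$-symmetry of the modulus, and in the generic case the centralizer is a full one-parameter flow, while in the trivial-modulus case one appeals directly to the existing real-coefficient normal form $\sigma\circ v_0^{\frac12}$.
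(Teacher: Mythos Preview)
Your treatment of $(1)\Leftrightarrow(2)$, $(2)\Rightarrow(3)$, and $(3)\Leftrightarrow(4)$ is fine and matches the discussion preceding the theorem in the paper. The issue is with closing the cycle.

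First, a slip: you write that ``in the generic case the centralizer is a full one-parameter flow.'' This is backwards. By Proposition~\ref{prop:symmetries}, the full flow occurs exactly in the trivial-modulus case; when the modulus is nontrivial (the generic situation) the holomorphic centralizer of $g_0$ is the finite cyclic group $\{g_0^{p/q}\}$. So precisely in the case you label ``generic,'' the coboundary equation $H=(\sigma\circ H\circ\sigma)\circ h$ cannot be solved by moving $H$ continuously inside a flow, and you have not said how to force the discrete obstruction $(\sigma h\sigma)\circ h\in\{g_0^{p/q}\}$ to vanish. This is a genuine gap.

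The paper (here for the unfolded statement, Theorem~\ref{thm:inv_curve}, and in \cite{GR21} for $\eps=0$) avoids this difficulty entirely by taking a different and more direct route for $(4)\Rightarrow(1)$: instead of going through the classification theorem and a cohomological argument, one \emph{constructs} the antiholomorphic involution
\[
\zeta \;=\; Z_{-j}^{-1}\circ\Phi_{-j}^{-1}\circ\Sigma\circ\Phi_j\circ Z_j
\]
directly from the Fatou coordinates. Condition~\eqref{cond:sym0} is exactly what makes this definition independent of $j$ (together with the gate-sector compatibilities), so $\zeta$ glues to a global antiholomorphic map on a punctured disk, extends across the fixed points by boundedness, and satisfies $\zeta\circ\zeta=\mathrm{id}$; its fixed-point set is then a real analytic curve, automatically $f_0$-invariant because $\zeta$ commutes with $f_0$. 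This buys you the invariant curve (hence $(1)$ and $(2)$) without ever invoking the classification theorem or solving a coboundary equation. Your approach could in principle be completed, but it is strictly more work and the missing step is nontrivial; the direct construction of the Schwarz reflection is the intended argument.
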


\subsection{The unfolding} 
We now consider a prepared generic unfolding $f_\eps$ of $f_0$. If we limit ourselves to real values of $\eps$, then it makes sense to have $f_\eps$ preserving a germ of real analytic curve, which is tangent to the real axis since $f_\eps$ is prepared. If $z= x+iy$, this germ of real analytic curve has the form $y= \alpha(x, \eps)= O(P_\eps(x))$, since the fixed points are real for real $\eps$ and belong to the invariant curve. This yields a local holomorphic diffeomorphism $z\mapsto \beta_\eps(z)=z+ i\alpha(z,\eps)$, which preserves the prepared character. Let us now consider 
$\tilde{f}_\eps=\beta_\eps^{-1} \circ f_\eps\circ \beta_\eps$. Then for real $\eps$, $\tilde{f}_\eps$ sends a neighborhood of $0$ on the real axis to the real axis.
For complex $\eps$, this yields  
$\tilde{f}_{\epsbar}(\ov{z})= \ov{\tilde{f}_\eps(z)},$ which in turn yields that 
$\tilde{g}_\eps(z):=\tilde{f}_{\epsbar}\circ \tilde{f}_\eps(z)= \ov{\tilde{f}_\eps(\ov{\tilde{f}_\eps(z)})}= (\sigma_\circ \tilde{f}_\eps)\circ (\sigma_\circ \tilde{f}_\eps)(z)$, i.e. $\tilde{g}_\eps$ has the  holomorphic square root $\sigma_\circ \tilde{f}_\eps$. Therefore, $g_\eps=f_{\epsbar} \circ f_\eps$ also has a holomorphic square root. 
\medskip

Hence we have the following theorem.

\begin{theorem}\label{thm:inv_curve} Let $f_\eps$ be a prepared germ of antiholomorphic parabolic unfolding. We have the equivalences:
\begin{enumerate} 
\item For real values of $\eps$, $f_\eps$ preserves a germ of real analytic curve depending real analytically on $\eps$.
\item The square $g_\eps=f_{\epsbar} \circ f_\eps$ has a holomorphic square root tangent to the identity.
\item The modulus of $f_\eps$ satisfies 
\begin{equation} T_{\frac12} \circ \Psi_{\ell,\eps,s}= \Psi_{\ell,\eps, s} \circ T_{\frac12},\label{cond_square-root}\end{equation}
\item The modulus of $f_\eps$ satisfies 
\begin{equation}\Sigma\circ \Psi_{\ell,\eps,s}= \Psi_{-\ell, \epsbar, \ov{s}}\circ \Sigma.\label{cond:sym}\end{equation}
\end{enumerate}\end{theorem}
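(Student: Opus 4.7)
The implication (1)$\Rightarrow$(2) has already been established in the paragraph preceding the theorem: rectifying the invariant curve to the real axis by a mix analytic change $\beta_\eps$ produces $\tilde f_\eps=\beta_\eps^{-1}\circ f_\eps\circ\beta_\eps$ with $\tilde f_{\epsbar}(\ov z)=\ov{\tilde f_\eps(z)}$, so $\sigma\circ\tilde f_\eps$ is a holomorphic square root of $\tilde g_\eps$ tangent to the identity, which pulls back by $\beta_\eps$ to a holomorphic square root of $g_\eps$ tangent to the identity. My plan is to close the loop by proving (3)$\Leftrightarrow$(4) by direct manipulation of the equivariance identity, (2)$\Leftrightarrow$(3) via the classical square-root criterion extended to unfoldings, and (2)$\Rightarrow$(1) by a commutation argument that produces an antiholomorphic involution whose fixed locus is preserved by $f_\eps$.

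For (3)$\Leftrightarrow$(4), I will use \eqref{eq_trans} rewritten as $\Sigma\circ T_{\frac12}\circ\Psi_{\ell,\eps,s}=\Psi_{-\ell,\epsbar,\ov s}\circ\Sigma\circ T_{\frac12}$, together with the fact that $T_{\frac12}$ has a real translation amount and hence commutes with $\Sigma$. Assuming \eqref{cond_square-root}, I push $T_{\frac12}$ through $\Psi_{\ell,\eps,s}$ on the left and through $\Psi_{-\ell,\epsbar,\ov s}$ on the right and cancel the common outer $T_{\frac12}$ to extract \eqref{cond:sym}; the converse runs the same identity in reverse, using \eqref{cond:sym} to exchange $\Sigma$ past the $\Psi$'s and then invertibility of $\Sigma$ to recover \eqref{cond_square-root}.

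For (2)$\Leftrightarrow$(3), I will exploit that the Fatou coordinates $\Phi_{j,\eps,s}$ of Proposition~\ref{prop:fatou} conjugate $G_{j,\eps}$ to $T_1$ and hence are also Fatou coordinates of $g_\eps$; thus the $\Psi_{\ell,\eps,s}$ of Definition~\ref{def:transition_functions} differ from the raw Ecalle--Voronin transition maps of $g_\eps$ only by the insertions $T_{\pm i\pi b(\eps)/k}$, which commute with $T_{\frac12}$. Consequently, \eqref{cond_square-root} is equivalent to the standard transition maps of $g_\eps$ commuting with $T_{\frac12}$, which is the classical criterion (the unfolded version of \cite{I}, realized within the sectoral-domain framework of \cite{R15}) for $g_\eps$ to admit a holomorphic square root tangent to the identity: such a square root must lift to $T_{\frac12}$ in every Fatou chart, and these local lifts assemble into a global holomorphic map precisely when they commute with every transition function.

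For (2)$\Rightarrow$(1), let $h_\eps$ be the holomorphic square root of $g_\eps$ tangent to the identity and restrict to real $\eps$, where $f_{\epsbar}=f_\eps$ and $g_\eps=f_\eps^{\circ 2}$. Then $f_\eps^{-1}\circ h_\eps\circ f_\eps$ is holomorphic, tangent to the identity, and its square equals $f_\eps^{-1}\circ g_\eps\circ f_\eps=g_\eps$; uniqueness of such a square root (a consequence of Proposition~\ref{prop:symmetries}, which forces any commuting map tangent to the identity to be $g_\eps^{\circ\alpha(\eps)}$ with $\alpha\equiv\tfrac12$) gives $f_\eps\circ h_\eps=h_\eps\circ f_\eps$. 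Hence $\phi_\eps:=h_\eps^{-1}\circ f_\eps$ is antiholomorphic with $\phi_\eps^{\circ 2}=h_\eps^{-2}\circ f_\eps^{\circ 2}=g_\eps^{-1}\circ g_\eps=\mathrm{id}$, and $\phi_0$ is tangent to $\sigma$, so its fixed-point locus $C_\eps$ is a germ of real analytic curve depending real analytically on real $\eps$. For $z\in C_\eps$ one has $f_\eps(z)=h_\eps(z)$, and then $\phi_\eps(f_\eps(z))=h_\eps^{-1}(f_\eps^{\circ 2}(z))=h_\eps^{-1}(g_\eps(z))=h_\eps(z)=f_\eps(z)$, so $f_\eps(C_\eps)\subset C_\eps$. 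The main obstacle is step (2)$\Leftrightarrow$(3): the extra translations $T_{\pm i\pi b/k}$ in \eqref{transition_functions} must be carefully disentangled from the raw transition maps of $g_\eps$, and the classical square-root criterion itself must be imported from the unfolding theory of \cite{R15} in a form compatible with the antiholomorphic setup, including the $\eps\in(\C^k,0)$ extension.
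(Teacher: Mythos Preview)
Your proof is correct, and the implications $(1)\Rightarrow(2)$, $(2)\Rightarrow(3)$, and $(3)\Leftrightarrow(4)$ coincide with the paper's. The difference lies in how the cycle closes. The paper proves $(4)\Rightarrow(1)$ directly by building the antiholomorphic involution from Fatou data,
\[
\zeta_{j,\eps,s}=Z_{-j,\epsbar}^{-1}\circ\Phi_{-j,\epsbar,\ov s}^{-1}\circ\Sigma\circ\Phi_{j,\eps,s}\circ Z_{j,\eps},
\]
and then checking well-definedness across boundary intersection sectors (via \eqref{cond:sym}), across gate sectors (via the translation compatibilities $\mathcal T_{\ell,\eps,s}\circ\Sigma=\Sigma\circ\mathcal T_{-\ell,\epsbar,\ov s}$ drawn from the proof of Theorem~\ref{class_thm}), across sectoral domains (using uniqueness of the holomorphic square root $\zeta_{\eps,s}\circ f_\eps$ of $g_\eps$), and finally across the discriminant by boundedness. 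You instead factor this work into two pieces: $(3)\Rightarrow(2)$, where $h_\eps$ is assembled from $T_{\frac12}$ in each Fatou chart by the same kind of sectoral gluing (which you rightly identify as the main obstacle and outsource to \cite{I} and \cite{R15}), followed by an elegant $(2)\Rightarrow(1)$ in which uniqueness of the square root (Proposition~\ref{prop:symmetries}) forces $h_\eps$ and $f_\eps$ to commute, so that $\phi_\eps=h_\eps^{-1}\circ f_\eps$ is an antiholomorphic involution whose fixed curve is $f_\eps$-invariant by a two-line computation. Your route gives a cleaner passage from the square root to the invariant curve and makes the role of the commutation transparent; the paper's route is more self-contained because it actually writes out the gate-sector and cross-sectoral-domain gluing rather than citing it. The hard analytic content---the sectoral construction and its gluing---is the same in both approaches, merely relocated from $(4)\Rightarrow(1)$ in the paper to $(3)\Rightarrow(2)$ in yours.
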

\begin{proof} 

$(1)\Rightarrow (2)$ is shown above. 

\noindent$(2) \Rightarrow (3)$. Let $h_\eps$ be a holomorphic square root of $g_\eps$ tangent to the identity. In particular, $h_\eps$ sends (approximately) a sector $S_{j,\eps,s}$ to the same sector. Then $ \Phi_{j,\eps,s}\circ Z_{j,\eps}\circ h_\eps\circ Z_{j,\eps}^{-1} \circ \Phi_{j,\eps,s}^{-1}=T_{\frac12}$, and since $h_\eps$ is globally defined, then $T_{\frac12}$ must commute with the  $\Psi_{\ell,\eps, s}$, yielding \eqref{cond_square-root}.

\noindent$(3) \Leftrightarrow (4)$ because of \eqref{eq_trans}.

\noindent$(4) \Rightarrow (1)$. Let $$\zeta_{j,\eps,s} = Z_{-j,\epsbar}^{-1} \circ \Phi_{-j,\epsbar,\ov{s}}^{-1} \circ \Sigma \circ \Phi_{j,\eps,s}\circ Z_{j,\eps}.$$
First note that $\zeta_{j,\eps,s}$ is well defined independently  of the freedom on Fatou coordinates because of \eqref{eq:determine Phi}. Note that $\zeta_{j,\eps,s}$ is independent of $j$, yielding a well defined  $\zeta_{\eps,s}$ on $\D_r\setminus\{P_\eps(z)=0\}$ for $\eps\in\Omega_s$. This follows from \eqref{cond:sym} on the intersection sectors to the boundary. On the gate sectors, joining two singular points, it follows from the proof of Theorem~\ref{class_thm} that the translations $\mathcal{T}_{\ell,\eps,s}$ along gate sectors (crossed in symmetric directions with respect to the real axis for $(\eps,s)$ and $(\epsbar,\ov{s})$) satisfy 
$\mathcal{T}_{\ell,\eps,s}\circ \Sigma = \Sigma \circ \mathcal{T}_{-\ell,\epsbar, \ov{s}}$. 

Because $\zeta_{\eps,s}$ is bounded in the neighborhood of $P_\eps(z)=0$, it can be extended to this set. 
Moreover, $\zeta_{\eps,s}$ depends antiholomorphically on $\eps$ and $\zeta_{\epsbar,\ov{s}}\circ \zeta_{\eps,s}={\rm id}$.

Since $T_{\frac12}$ and $\Sigma$ commute, it  follows from \eqref{cond:sym} and \eqref{eq_trans} that $\zeta_{\eps,s}\circ f_\eps$ is a holomorphic square root of $g_\eps$ over $\Omega_s$, whose limit is tangent to the identity when $\eps\to 0$. 
On the intersection $\Omega_s\circ \Omega_{s'}$, $\zeta_{\eps,s}\circ f_\eps$ and $\zeta_{\eps,s'}\circ f_\eps$ are two holomorphic square roots of $g_\eps$, whose limit is tangent to the identity when $\eps\to 0$. 
By uniqueness of such square roots, we have $\zeta_{\eps,s}=\zeta_{\eps,s'}$. Hence $\zeta_\eps$ is uniformly defined outside the discriminantal set and bounded there, yielding that  it  can be extended antiholomorphically to this set.

Now, restricting to real values of $\eps$, $\zeta_\eps$ is an antiholomorphic involution depending real-analytically on $\eps$. 
Let us look at the equation of fixed points $\zeta_\eps(z)=z$. Since $\zeta_0'(0)=1$, then letting $z=x+iy$, by the implicit function theorem the equation for the imaginary parts yields $y-q(\eps,x)=0$, with $q$ real-analytic in $\eps$ and $x$. Let $V(x,y,\eps)=0$ be the equation for the real parts.
Since  $\zeta_s$ is an involution, it has no isolated fixed points. Hence $y-q(\eps,x)$ divides $V(x,y,\eps)$. Let $h_\eps(z) = z+iq(z,\eps)$. Then $\chi_\eps= h_\eps^{-1}\circ \zeta_\eps\circ h_\eps$ fixes the real axis. By the identity principle $\sigma\circ \chi_\eps={\rm id}$, yielding that $\chi_\eps=\sigma$ and that $\zeta_\eps$ is the Schwarz reflection with respect to the analytic curve $y=q(\eps,x)$. 
Let $z$ be any fixed point of $\zeta_\eps$. Since $\zeta_\eps$ and $f_\eps$ commute, then $\zeta_\eps(f_\eps(z))=f_\eps(z)$, i.e. $f_\eps(z)$ is also a fixed point of $\zeta_\eps$. Hence the curve $y=q(\eps,x)$ is invariant by $f_\eps$.
\end{proof}

\section{Antiholomorphic square root of a germ of holomorphic parabolic unfolding}

The formal normal form of a holomorphic parabolic germ is invariant under rotations of order $k$ (modulo a reparametrization), while that of an antiholomorphic germ in prepared form has the real axis as a symmetry axis. Each invariance requires a quotient in the definition of the modulus of the corresponding parabolic germ or its unfoldings. For these respective quotients, we will need to use actions of the rotation group $R_k$ of order $k$ and of the symmetry with respect to an axis  $e^{i\frac{\pi m}{k}}\R$ on the set of indices $\{\pm1, \dots, \pm k\}$ of the transition maps. 
 We start by defining these actions.

\subsection{Actions on the set of indices}

\begin{definition}\label{def:sym_indices}
\begin{itemize}
\item Let $\iota: \{\pm1, \dots, \pm k\}\rightarrow \{1, \dots 2k\}$ be defined as
$$\iota(j) = \begin{cases} j, &j>0\\
2k+1+j, &j<0.\end{cases} $$
\item The rotation group $R_k=\{r_0, r_2,\dots, r_{2(k-1)}\}$ with $r_m (w) = e^{i\frac{\pi m}{k}}w$ acts on the set of indices $\pm 1, \dots, \pm k$ as  $r_{2m}(j) = \iota^{-1} (q(\iota(j)+2m))$, where $q(s)\in \{1, \dots, 2k\}$ and $q(s)$ is congruent to $s$ (${\rm mod}\ 2k$). (By abuse of notation, $r_{2m}$ denotes both the rotation and its action on the set of indices.)
\item The symmetry $\xi_0$ with respect to $\R$ on the set of indices $\{\pm 1, \dots, \pm k\}$  is defined as $\xi_0(j)=-j$. 
\item 
The symmetry $\xi_m$ with respect to the line $e^{i\frac{\pi m}{k}}\R$  on the set of indices $\{\pm 1, \dots, \pm k\}$  is defined as 
$\xi_m = r_m\circ \xi_0\circ r_{m}^{-1}$ for $m=0, \dots, k-1$ (see Figure~\ref{fleur5}).
\end{itemize}\end{definition}

\subsection{The case $\eps=0$}

This case has been studied in \cite{GR21}. A holomorphic parabolic germ 
\begin{equation}g(z) = z + z^{k+1} + o(z^{k+1})\label{hol_parabolic}\end{equation} has $k$ formal antiholomorphic square roots of the form \begin{equation}f(\zbar) = e^{i\frac{2\pi m}{k}} \zbar +\frac12 e^{i\frac{2\pi m}{k}} \zbar^{k+1} + o(\zbar^{k+1}),\label{square-root_theta}\end{equation} $m=0, \dots, k-1$. 
Denoting $\Psi_j=\Psi_{j,0,s}$, $j=\pm1, \dots, \pm k$, defined as in Definition~\ref{def:transition_functions}, the analytic part of the modulus of $g$ is composed of the $2k$-tuple of normalized transition functions $(\Psi_{1},\dots, \Psi_{k}, \Psi_{-k}, \dots, \Psi_{-1})$ quotiented by:
\begin{itemize}\item  the action of $\C$ corresponding to conjugating all $\Psi_{j}$ by translations $T_c$;
\item the action of the rotation group $R_k$ of order $k$. The action  of $r_{2m}$ is given by: $(\Psi_{1},\dots, \Psi_k,\Psi_{-k}\dots, \Psi_{-1})\mapsto \left(\Psi_{r_{2m}(1)},\dots, \Psi_{r_{2m}(k)},\Psi_{r_{2m}(-k)}\dots, \Psi_{r_{2m}(-1)}\right)$.
\end{itemize}

 \begin{theorem} \cite{GR21} The formal square root \eqref{square-root_theta} is antiholomorphic if and only if the modulus satisfies a symmetry condition with respect to the symmetry axis $e^{i\frac{\pi m}{k}}\R$. If $\xi_m(j)$ is the symmetric index of $j$ with respect to $e^{i\frac{\pi m}{k}}\R$, then this symmetry condition takes the form $$\STt \circ \Psi_j=\Psi_{\xi_m(j)}\circ \STt
 $$ for some representative of the modulus.\end{theorem}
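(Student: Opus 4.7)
The plan is to reduce to $m=0$ by a rotation, and then argue both directions using the antiholomorphic intertwining relation \eqref{eq_trans} at $\eps=0$. First I would apply the change of coordinate $w = e^{-i\pi m/k}z$: a direct computation transforms the $m$-th formal square root \eqref{square-root_theta} of $g$ into the formal prepared form $\ov{w}\pm\tfrac12\ov{w}^{k+1}+o(\ov{w}^{k+1})$, while permuting the indices of the $2k$ sectors so that the symmetric index $\xi_m(j)=2m-j$ becomes $\xi_0(j)=-j$ and the axis $e^{i\pi m/k}\R$ becomes $\R$. It therefore suffices to prove the statement for $m=0$: the formal prepared square root of $\tilde g(w):=e^{-i\pi m/k}g(e^{i\pi m/k}w)$ is a convergent antiholomorphic germ if and only if the modulus of $\tilde g$ admits a representative satisfying $\STt\circ \Psi_j = \Psi_{-j}\circ \STt$ for all $j=\pm 1,\dots,\pm k$.

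For necessity, assume $\tilde f$ is a genuine antiholomorphic parabolic germ with $\tilde f\circ\tilde f=\tilde g$. Section~3 (specialized to $\eps=0$) furnishes normalized Fatou coordinates $\Phi_j$ on sectors $S_j$ of $\tilde f$ satisfying $\Phi_{-j}\circ\tilde f=\STt\circ\Phi_j$; iterating and using $(\STt)^{\circ 2}=T_1$ gives $\Phi_j\circ \tilde g=T_1\circ \Phi_j$, so the same $\Phi_j$ are Fatou coordinates of $\tilde g$. The associated transition functions then inherit \eqref{eq_trans}, which at $\eps=0$ is precisely the desired symmetry.

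For sufficiency, choose Fatou coordinates $\Phi_j$ of $\tilde g$ realizing a representative of the modulus with the above symmetry, and set $\phi_j:=\Phi_j\circ Z_j$. On each sector $S_j$ I define
$$\tilde f_j:=\phi_{-j}^{-1}\circ\STt\circ\phi_j.$$
Each $\tilde f_j$ is antiholomorphic because $\Sigma$ is, and a one-line computation gives $\tilde f_{-j}\circ\tilde f_j = \phi_j^{-1}\circ(\STt)^{\circ 2}\circ\phi_j = \phi_j^{-1}\circ T_1\circ \phi_j = \tilde g$ on $S_j$. On each component of a sector overlap $S_j\cap S_{j'}$, the equality $\tilde f_j=\tilde f_{j'}$ transcribes, via the Fatou coordinates, exactly into the intertwining $\STt\circ\Psi_\ell=\Psi_{-\ell}\circ\STt$ for the relevant $\ell$ --- which is the standing hypothesis. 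Hence the $\tilde f_j$ glue to a single antiholomorphic map $\tilde f$ on a punctured neighborhood of the origin. Since $\STt$ fixes infinity in the time coordinate, $\tilde f$ is bounded near $0$, and the antiholomorphic Riemann removable singularity theorem extends it across the fixed point; comparing linear terms identifies $\tilde f$ with \eqref{square-root_theta} for $m=0$. Undoing the initial rotation then yields the result for general $m$.

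The main obstacle is the bookkeeping in the gluing step. The definition \eqref{transition_functions} of $\Psi_\ell$ involves the translations $\pm\sgn(\ell)\,i\pi b/k$ coming from \eqref{relation:Z_j}, and the normalization \eqref{c_0} fixes their constant terms with a sign $\sgn(\ell)(-1)^\ell$. One must check that conjugating such a $\Psi_\ell$ by $\STt=\Sigma\circ\Tt$ yields \emph{exactly} $\Psi_{-\ell}$ on every component of each overlap; this uses that $b$ is real (so $\Sigma(i\pi b/k)=-i\pi b/k$), together with the parity identity $(-1)^\ell=(-1)^{-\ell}$, so that once the indices are reversed the constants match. Once this compatibility is settled, the gluing is a direct adaptation of the $(4)\Rightarrow(1)$ argument of Theorem~\ref{thm:inv_curve}, taken at $\eps=0$.
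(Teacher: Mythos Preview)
The paper does not supply its own proof of this theorem: it is quoted from \cite{GR21} and used as input for the unfolding version, Theorem~\ref{thm:sqr}. That said, your proposal coincides with the proof the paper gives for Theorem~\ref{thm:sqr} specialized to $\eps=0$: the reduction to the axis $\R$ by a rotation, the construction $\phi_{-j}^{-1}\circ\STt\circ\phi_j$ on each sector (your $\tilde f_j$ is exactly \eqref{def:f_s} at $\eps=0$), the gluing via the symmetry of the $\Psi_\ell$, and the extension across the fixed point by boundedness. So your strategy is the right one and essentially the paper's.

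One point deserves more care. Under your rotation $w=e^{-i\pi m/k}z$ you obtain $\tilde g(w)=w+(-1)^m w^{k+1}+\dots$ and $\tilde f(\bar w)=\bar w+\tfrac{(-1)^m}{2}\bar w^{k+1}+\dots$; for odd $m$ the holomorphic germ $\tilde g$ is \emph{not} in the prepared form \eqref{hol_parabolic} on which the Fatou coordinates, the labeling of sectors and the normalization \eqref{c_0} are set up. The paper's proof of Theorem~\ref{thm:sqr} handles this explicitly: for $k$ odd it further conjugates by $z\mapsto -z$ to restore the $+$ sign, while for $k$ even it passes to $\tilde g^{-1}$. Your sentence ``transforms \dots into the formal prepared form $\bar w\pm\tfrac12\bar w^{k+1}+\dots$'' glosses over this second reduction; without it, your bookkeeping of the constants in \eqref{c_0} and of the index permutation does not literally apply when $m$ is odd. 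Once you insert that step, the rest of your argument goes through.
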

\begin{figure}\begin{center} \includegraphics[width=5cm]{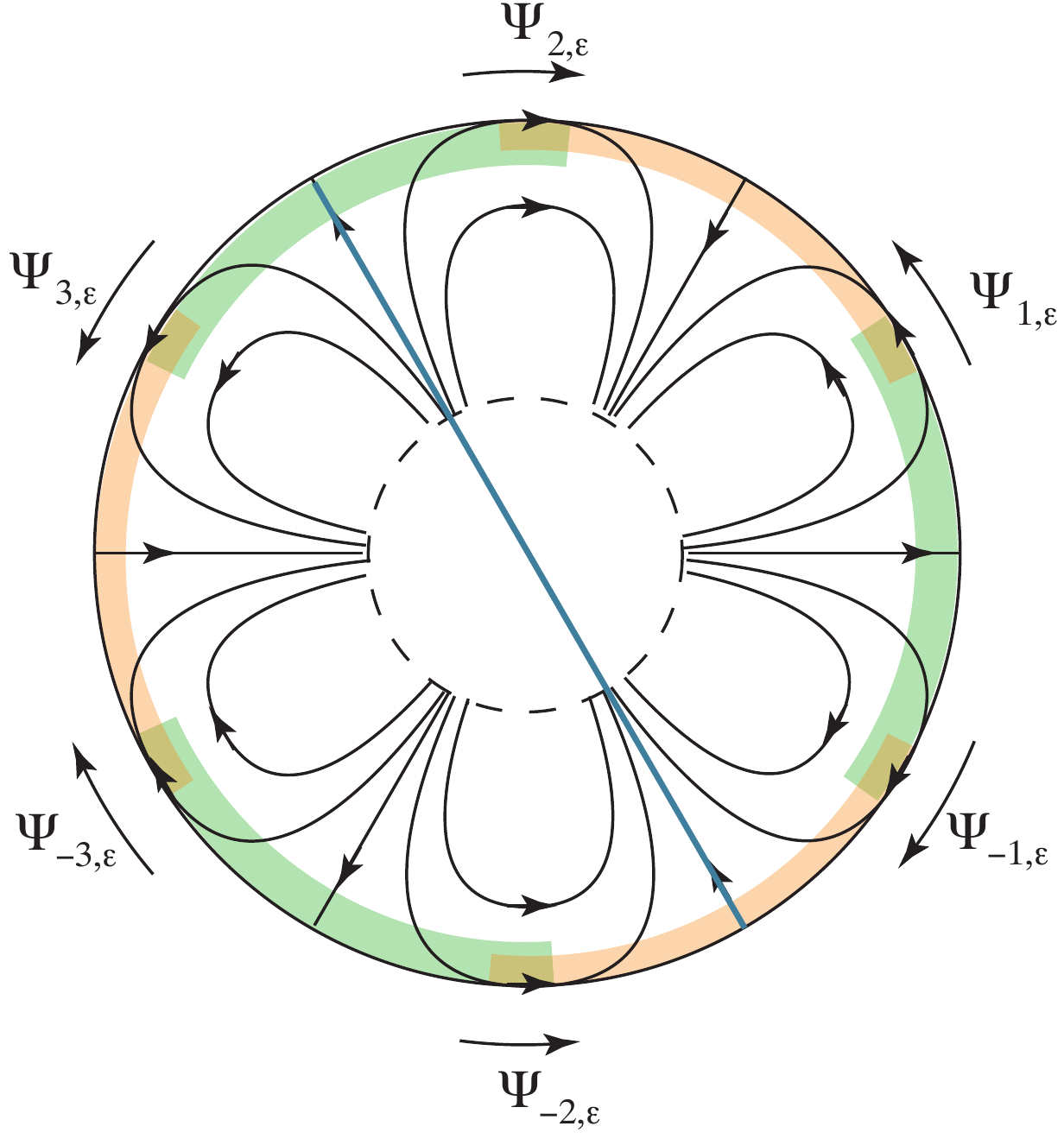}\caption{For $k=3$, the symmetry condition on the indices with respect to the symmetry axis $e^{\frac{2\pi i}{3}}\R$ is given by the involution $\xi_2(1)=-3$, $\xi_2(2)=3$, $\xi_2(-1)= -2$.}\label{fleur5}\end{center}\end{figure}

\subsection{The unfolding case}

Generic holomorphic unfoldings of a parabolic germ \eqref{hol_parabolic} have been studied in \cite{R15}. They can also be put in a prepared form with canonical parameters
\begin{equation} g_\eps(z) = z + P_\eps(z) (1 +M_\eps(z) + P_\eps(z)N_\eps(z)),\label{hol_parabolic_unf}\end{equation} where $P_\eps$ is defined in \eqref{def:P} and $M_\eps$ is a polynomial in $z$ of degree at most $k$. 

Sectoral domains can be defined as in Definition~\ref{def:sectoral_domain} and transition functions for each sectoral domain as in Definition~\ref{def:transition_functions}.

\begin{definition} Let $g_\eps$ be a prepared germ of  type \eqref{hol_parabolic_unf}. The modulus of $g_\eps$ is given by the equivalence class of $(3+2kC(k))$-tuples (see Figure~\ref{fleur4})
\begin{equation} \mathcal{M}(f_\eps)=\left(k,\eps,b_\eps, \left(\left(\{\Psi_{\pm1,\eps,s}, \dots,\Psi_{\pm k,\eps,s}\}_{\eps\in\Omega_s\cup\{0\}}\right)/\equiv\right)_s \right)/\cong, \label{modulus}\end{equation}
where $\{\Psi_{\ell,\eps,s}\}_{\eps\in\Omega_s\cup\{0\}}$  are the associated normalized transition functions to a sectoral domain $\Omega_s$ and the equivalence definitions are defined as follows:
\begin{enumerate} 
\item $\{\Psi_{\pm1,\eps,s},\dots,\Psi_{\pm k,\eps,s} \}_{\eps\in\Omega_s\cup\{0\}} \equiv\{\widetilde{\Psi}_{\pm1,\eps,s},\dots,\widetilde{\Psi}_{\pm k,\eps,s}\}_{\eps\in\Omega_s\cup\{0\}}$ if there exists $B_{\eps,s}$ analytic in $\eps\in \Omega_s$ with continuous limit at $\eps=0$ such that 
$$ \widetilde{\Psi}_{\ell,\eps,s}= T_{-B_{\eps,s}}\circ \Psi_{\ell,\eps,s} \circ T_{B_{\eps,s}}.$$
\item Let $r_{2\ell}\in R_k$, $\ell=0, \dots, k-1$, act on $\eps$ by  
$$r_{2\ell}(\eps_{k-1}, \dots, \eps_1,\eps_0)= \left(\eps_{k-1}e^{-i\frac{2\pi\ell(k-2)}{k}}, \dots, \eps_1,\eps_0e^{i\frac{2\pi\ell}{k}}\right).$$
Let $\Omega_{r_{2\ell}(s)}:= r_{2\ell}(\Omega_s)$. Then
\begin{align*}\begin{split} &\left(k,\eps,b_\eps, \left(\{\Psi_{1,\eps,s}, \Psi_{-1,\eps,s}, \dots,\Psi_{k,\eps,s},\Psi_{-k,\eps,s}\}_{\eps\in\Omega_s\cup\{0\}}\right)_s\right)\\
&\qquad\cong
\left(k,r_{2\ell}(\eps),b_{r_{2\ell}(\eps)}, \left(\{\Psi_{r_{2\ell}(1),r_{2\ell}(\eps),r_{2\ell}(s)}, \Psi_{\xi_{2\ell}(r_{2\ell}(1)),r_{2\ell}(\eps),r_{2\ell}(s)},\dots,\right.\right.\\
&\qquad\qquad\qquad\qquad\qquad\left.\left.\Psi_{r_{2\ell}(k),r_{2\ell}(\eps),r_{2\ell}(s)}, \Psi_{\xi_{2\ell}(r_{2\ell}(k)),r_{2\ell}(\eps),r_{2\ell}(s)}\}_{\eps\in\Omega_s\cup\{0\}}\right)_s\right).\end{split}\end{align*}
\end{enumerate}
\end{definition}

\begin{theorem}\label{thm:sqr} Let $g_\eps$ be a prepared generic unfolding of a holomorphic parabolic germ of type \eqref{hol_parabolic_unf}. Then $g_\eps$ has an antiholomorphic square root $f_\eps$  (i.e. satisfying $f_{\epsbar}\circ f_\eps= g_\eps$), with $f_0$ of the form \eqref{square-root_theta}, if and only if a representative of the modulus \eqref{modulus} satisfies 
\begin{equation} \STt \circ \Psi_{\ell,\eps,s}=\Psi_{s_m(\ell),\epsbar,\ov{s}}\circ \STt.\label{cond:square_root}\end{equation}
Moreover, this antiholomorphic square root is unique unless the modulus is trivial, i.e. $g_\eps$ is conjugate to $v_\eps^1$. In the latter case, there exist an infinite number of square roots which are the conjugates of $r_m\circ \sigma\circ v^{\frac12+iy(\eps)}\circ r_m^{-1}$, with $y(\eps)$ analytic and $y(\epsbar)=\ov{y(\eps)}$. 
\end{theorem}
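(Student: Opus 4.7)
The plan is to mimic, in the unfolded setting, the argument used for $\eps=0$ in \cite{GR21}, exploiting the identity $\STt\circ\STt = T_1$ (since $\Sigma$ commutes with $T_{1/2}$) so that antiholomorphic square roots of $g_\eps$ correspond exactly to symmetries of the modulus of $g_\eps$. After conjugating by $r_m$ one may reduce to the case $m=0$ and restore the general $m$ at the end, so I would fix that reduction first.

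For \emph{necessity}, assume $g_\eps = f_{\ov\eps}\circ f_\eps$ with $f_\eps$ antiholomorphic and $f_0$ of the form \eqref{square-root_theta}. After the reduction to $m=0$, $f_\eps$ is a prepared antiholomorphic family in the sense of Theorem~\ref{thm_prepared}, and the Fatou coordinates $\Phi_{j,\eps,s}$ of Proposition~\ref{prop:fatou} satisfy $\Phi_{-j,\ov\eps,\ov s}\circ F_{j,\eps} = \STt\circ \Phi_{j,\eps,s}$. Iterating this relation yields $\Phi_{j,\eps,s}\circ G_{j,\eps}=T_1\circ\Phi_{j,\eps,s}$, so the same coordinates are Fatou coordinates of $g_\eps$ and the transition maps of $g_\eps$ coincide with those of $f_\eps$. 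Conjugating \eqref{transition_functions} by $\STt$ and using that $\STt$ commutes with $T_{\pm i\pi b(\eps)/k}$ (because $b(\eps)\in\R$) then delivers \eqref{cond:square_root}.

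For \emph{sufficiency}, assume the modulus of $g_\eps$ satisfies \eqref{cond:square_root}, and let $\Phi_{j,\eps,s}$ be the normalized Fatou coordinates of $g_\eps$. Set
\begin{equation*}
f_\eps|_{S_{j,\eps,s}} := Z_{\xi_m(j),\ov\eps,\ov s}^{-1}\circ \Phi_{\xi_m(j),\ov\eps,\ov s}^{-1}\circ \STt\circ \Phi_{j,\eps,s}\circ Z_{j,\eps},
\end{equation*}
which is antiholomorphic in $z$ and antiholomorphic in $\eps$ by construction. The hypothesis \eqref{cond:square_root} is precisely the compatibility condition across the boundary subsectors of adjacent sectors; across gate subsectors the transition is a pure translation whose amount is the period of the enclosed singular point, and this period is preserved under $\eps\mapsto\ov\eps$ because $P_\eps$ and $b(\eps)$ have real coefficients, so compatibility holds there as well. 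Hence $f_\eps$ is uniformly defined on $\D_r\setminus\{P_\eps=0\}$ for $\eps$ in the complement of the discriminant set, and extends to the singular points (by boundedness) and across the discriminant (as in the proof of Theorem~\ref{class_thm}). By construction, in Fatou coordinates $f_{\ov\eps}\circ f_\eps$ reads as $\STt\circ\STt = T_1$, which is the Fatou representation of $g_\eps$; thus $f_{\ov\eps}\circ f_\eps = g_\eps$.

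For \emph{uniqueness}, if $f_\eps,\tilde f_\eps$ are two such square roots with the same $f_0$, then $h_\eps:=\tilde f_\eps\circ f_\eps^{-1}$ is holomorphic in $z$, tangent to the identity at $\eps=0$, and commutes with $g_\eps$. Proposition~\ref{prop:symmetries} gives the dichotomy: in the nontrivial-modulus case $h_\eps\equiv\mathrm{id}$, proving uniqueness; in the trivial case $h_\eps=g_\eps^{\circ\alpha(\eps)}$ for analytic $\alpha$, and undoing the normal form produces the family $r_m\circ\sigma\circ v_\eps^{\frac12+iy(\eps)}\circ r_m^{-1}$, the constraint $y(\ov\eps)=\overline{y(\eps)}$ coming from the antiholomorphic dependence of $f_\eps$ on $\eps$. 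The main obstacle I anticipate is the compatibility across gate subsectors in the sufficiency step: one must check that the translation across a gate has the same value on the $(j,\eps,s)$ side and on the $(\xi_m(j),\ov\eps,\ov s)$ side of the definition of $f_\eps$, which reduces to the reality of $b(\eps)$ and of the coefficients of $P_\eps$, together with Remark~\ref{rem:constants} and the period computation used in the proof of Theorem~\ref{class_thm}.
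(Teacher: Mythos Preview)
Your approach is essentially the paper's: reduce to $m=0$ via $r_m$-conjugation, build $f_\eps$ sector-by-sector from $\STt$ sandwiched between Fatou coordinates, check compatibility, and use Proposition~\ref{prop:symmetries} for uniqueness. The necessity argument you spell out is implicit in the paper (it is just \eqref{eq_trans}), so there is no divergence there.

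There is, however, a genuine gap in your sufficiency step. Your formula defines an $f_{\eps,s}$ \emph{for each sectoral domain} $\Omega_s$, and the well-definedness you verify (boundary subsectors via \eqref{cond:square_root}, gate subsectors via the period symmetry) only shows that $f_{\eps,s}$ is well-defined on $\D_r\setminus\{P_\eps=0\}$ for $\eps\in\Omega_s$. You still have to prove that $f_{\eps,s}=f_{\eps,s'}$ on $\Omega_s\cap\Omega_{s'}$; these are built from \emph{different} Fatou coordinates $\Phi_{j,\eps,s}$ and $\Phi_{j,\eps,s'}$, so agreement is not automatic and is not covered by the argument of Theorem~\ref{class_thm} you cite (there the $H_{\eps,s}$ are conjugacies, here the $f_{\eps,s}$ are square roots). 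The paper handles this in two pieces: first it observes that the definition of $f_{\eps,s}$ is \emph{intrinsic}, because the only freedom in normalized Fatou coordinates compatible with \eqref{cond:square_root} is $T_{a_s(\eps)}$ with $a_{\ov s}(\ov\eps)=\ov{a_s(\eps)}$, which cancels in the sandwich; then it chooses Fatou coordinates with $\Phi_{\ell,0,s}$ independent of $s$, applies Proposition~\ref{prop:symmetries} to $h_\eps=f_{\eps,s'}^{-1}\circ f_{\eps,s}$ (which commutes with $g_\eps$), and uses $\lim_{\eps\to0}h_\eps=\mathrm{id}$ to force $h_\eps\equiv\mathrm{id}$. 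Your phrase ``across the discriminant (as in the proof of Theorem~\ref{class_thm})'' does not supply this.

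A second, smaller point: your treatment of the trivial-modulus case is incomplete. Knowing that any two square roots differ by $g_\eps^{\circ\alpha(\eps)}$ does not by itself yield the explicit form $r_m\circ\sigma\circ v_\eps^{\frac12+iy(\eps)}\circ r_m^{-1}$; you must first produce one square root and then determine the full family. The paper does this by passing to the $w=\exp(-2\pi iZ)$ coordinate on the orbit sphere, where $g_\eps$ becomes the identity and any antiholomorphic square root fixing $\{0,\infty\}$ setwise must be $\delta\circ L$ with $\delta(w)=1/\ov w$ and $L$ linear; lifting back gives $\Sigma\circ T_{\frac12+iy(\eps)}$ with $y$ real for real $\eps$, hence $y(\ov\eps)=\ov{y(\eps)}$ after holomorphic extension. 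Finally, in your uniqueness paragraph the commutation $h_\eps\circ g_\eps=g_\eps\circ h_\eps$ is immediate only for real $\eps$ (where $f_\eps\circ f_\eps=g_\eps$); this suffices, but should be said.
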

\begin{proof} When the modulus is trivial we can suppose that $g_\eps= v_\eps^1$. Moreover $(r_m)^*(v_\eps)=(-1)^mv_\eps$. Hence for $m$ odd, $r_m^{-1}\circ g_\eps\circ r_m=v_\eps^{-1}$ and in this case, we consider square roots of $g_\eps^{-1}$. It therefore suffices to consider antiholomorphic square roots tangent to the identity. In the time coordinate (the $Z_j$-coordinate), $v_\eps^1$ is given by $T_1$, and in the coordinate $w= {\rm Exp}(-2\pi i Z)$, it is given by the identity on $\CP^1$. For real $\eps$, square roots in the $w$-coordinate must satisfy  $\kappa_\eps\circ \kappa_\eps= {\rm id}$. Moreover, $\kappa$ exchanges $0$ and $\infty$.
Hence, $\kappa= \delta\circ L$, for $\delta(w) = \frac1{\ov{w}}$ and $L$ some linear transformation. Then, square roots in the $Z_j$-coordinates are of the form $\Sigma\circ T_{a(\eps)}$, with $a(\eps)+\ov{a(\eps)}=1$, i.e. $a(\eps)=\frac12+iy(\eps)$ for some real function $y$ depending real-analytically on $\eps$.  Hence, for real $\eps$,  the square roots  are given by $r_m\circ \sigma\circ v^{\frac12+iy(\eps)}\circ r_m^{-1}$, with $y(\eps)$ real-analytic. The result follows by extending holomorphically $y(\eps)$ to the complex domain (thus yielding that the square root depends antiholomorphically on $\eps$).

Let us now suppose that the modulus is not trivial. As a first reduction, let us rather consider $g_{1,\eps}=r_m^{-1}\circ g_\eps\circ r_m$. Then we can limit ourselves to the case $m=0$ and $\xi_0(j)=-j$. However for $m$ odd,   then $g_{1,0}'(z) = z-z^{k+1} + \dots$ and a second reduction is needed.  When $k$ is odd it suffices to conjugate with $z\mapsto -z$. When $k$ is even, the second reduction is to work with $g_{2,\eps}=g_{1,\eps}^{-1}$, which is in prepared form \eqref{hol_parabolic_unf}.
Hence we can limit ourselves to prove the theorem when $m=0$.   

Let $\Omega_s$ be a sectoral domain, and let $\Phi_{j,\eps,s}$, $j=0,\pm1, \dots, \pm k$ (with indices $(\text{mod}\; 2k)$), be corresponding normalized Fatou coordinates for which the transition functions satisfy \eqref{cond:square_root}. 
We define 
\begin{equation}f_{\eps,s}=  Z_{\epsbar,-j}^{-1}\circ \Phi_{-j,\epsbar,\ov{s}}^{-1} \circ \Sigma\circ T_{\frac12} \circ \Phi_{j,\eps,s}\circ Z_{\eps,j}, \qquad \text{on\ } S_{j,\eps,s}.\label{def:f_s}\end{equation}
Note that Fatou coordinates such that \eqref{cond:square_root} is satisfied are determined up to left composition with $T_{a_s(\eps)}$ such that $a_{\ov{s}}(\epsbar)=\ov{a_s(\eps)}$. 
Hence, the definition of $f_{\eps,s}$ is intrinsic and does not depend on the choice of Fatou coordinates. 
Moreover, the function $f_{\eps,s}$ is well defined on $\D_r\setminus\{P_\eps(z)=0\}$. Indeed \eqref{cond:square_root} guarantees that it is well defined when crossing an intersection sector touching the boundary of the disk because of \eqref{cond:square_root}.
Over a gate sector, it follows from the proofs  of Theorem~\ref{class_thm} and \ref{thm:inv_curve} that the translations $\mathcal{T}_{\ell,\eps,s}$  satisfy 
$\mathcal{T}_{\ell,\eps,s}\circ \Sigma = \Sigma \circ \mathcal{T}_{-\ell,\epsbar, \ov{s}}$.

The map $f_{\eps,s}$ is bounded in the neighborhood of $P_\eps(z)=0$ and hence can be extended to that set. 

We now need to show that different $f_{\eps,s}$ glue into a global $f_\eps$ defined for $\eps$ outside the discriminant set in $\eps$-space. 

It is of course possible to choose the Fatou coordinates respecting \eqref{eq:determine Phi} so that $  \lim_{\eps\to 0\atop \eps\in \Omega_s} \Phi_{\ell,\eps,s} =\Phi_{\ell,0}$ be independent of $s$.  

Let now consider $\Omega_s\cap \Omega_{s'}$ and let $h_\eps= f_{\eps,s'}^{-1}\circ f_{\eps,s}$. It commutes with $g_\eps$. Because the modulus is non trivial and in view of Proposition~\ref{prop:symmetries} this means that $h_\eps= g_\eps^{\frac{p}{q}}$  for some $\frac{p}{q}\in \Z$ independent of $\eps$. Moreover, because of the limit property, then $\lim_{\substack{\eps\to 0,\\ \eps\in\Omega_s\cap \Omega_{s'}}} h_\eps= {\rm id}$. 
Hence $\frac{p}{q}=0$ and  $f_{\eps,s} = f_{\eps,s'}$. 

Finally $f_\eps$ is bounded in the neighborhood of the discriminant set in $\eps$-space and can be extended antiholomorphically there. 
\end{proof}

\begin{corollary}\label{cor:square} Let $g_\eps$ be a prepared unfolding of a holomorphic parabolic germ of type \eqref{hol_parabolic_unf}, for which a representative of the modulus satisfies modulus \eqref{cond:square_root}. Then $g_\eps$ has a holomorphic square root is and only $g_\eps$ has an invariant germ of real analytic curve. 
\end{corollary}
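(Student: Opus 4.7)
The plan is to combine Theorem~\ref{thm:sqr} and Theorem~\ref{thm:inv_curve}. Under \eqref{cond:square_root}, Theorem~\ref{thm:sqr} provides an antiholomorphic square root $f_\eps$ of $g_\eps$ with $g_\eps=f_\epsbar\circ f_\eps$. Theorem~\ref{thm:inv_curve} already shows that $g_\eps$ admitting a holomorphic square root tangent to the identity is equivalent to $f_\eps$ preserving a real analytic curve (which is then automatically preserved by $g_\eps$). So the proof reduces to showing (a) that any holomorphic square root of $g_\eps$ can be taken tangent to the identity and yields such an invariant curve for $g_\eps$, and (b) that an invariant real analytic curve for $g_\eps$ produces the corresponding one for $f_\eps$.

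For (a), observe that in each Fatou chart the lift of a holomorphic square root $h_\eps$ commutes with $T_1$, hence must be a translation $T_c$ with $2c\in 1+\Z$; composing with an integer power of $g_\eps$ normalizes to $T_{1/2}$, which is tangent to the identity. This gives the tangent-to-identity property, after which Theorem~\ref{thm:inv_curve} directly provides an invariant curve $\gamma_\eps$ of $f_\eps$, which is then preserved by $g_\eps=f_\epsbar\circ f_\eps$.

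For (b), assume $g_\eps$ preserves an invariant real analytic curve $\gamma_\eps$. I rectify $\gamma_\eps$ to the real axis by a holomorphic change of coordinate $\beta_\eps$ preserving the prepared form, exactly as in the proof of Theorem~\ref{thm:inv_curve}; in the new coordinates the conjugated $\tilde g_\eps$ has real coefficients in $z$ for real $\eps$, so $\sigma\circ\tilde g_\eps\circ\sigma=\tilde g_\eps$ for real $\eps$. The key observation is that for real $\eps$ the reflected map $\tilde f^{*}_\eps:=\sigma\circ\tilde f_\eps\circ\sigma$ is again antiholomorphic (in $z$ and in $\eps$) and satisfies $\tilde f^{*}_\eps\circ\tilde f^{*}_\eps=\sigma\circ\tilde f_\eps^{\circ 2}\circ\sigma=\sigma\circ\tilde g_\eps\circ\sigma=\tilde g_\eps$, so it is another antiholomorphic square root of $\tilde g_\eps$ on the real slice. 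When the modulus is nontrivial, uniqueness in Theorem~\ref{thm:sqr} forces $\tilde f^{*}_\eps=\tilde f_\eps$ for real $\eps$, so $\tilde f_\eps$ commutes with $\sigma$ and preserves the real axis; undoing the rectification, $f_\eps$ preserves $\gamma_\eps$, and Theorem~\ref{thm:inv_curve} then yields a holomorphic square root of $g_\eps$. The trivial-modulus case is immediate since $g_\eps$ is conjugate to $v_\eps^1$, whose holomorphic square root is $v_\eps^{1/2}$. The main technical point I expect is the careful invocation of uniqueness of the antiholomorphic square root, in particular verifying that $\tilde f^{*}_\eps$ has the correct antiholomorphic dependence on $\eps$ to qualify as an antiholomorphic square root in the sense of Theorem~\ref{thm:sqr}.
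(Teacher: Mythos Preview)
Your approach is the same as the paper's: obtain the antiholomorphic square root $f_\eps$ from Theorem~\ref{thm:sqr} and then invoke Theorem~\ref{thm:inv_curve}. The paper's proof is a single sentence and leaves implicit the passage between ``invariant curve for $g_\eps$'' and ``invariant curve for $f_\eps$''; your part (b) supplies exactly this bridge via the uniqueness clause of Theorem~\ref{thm:sqr}, which is a legitimate and clean way to fill that gap.

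Two small remarks. In (a), the lift $H$ of a holomorphic square root satisfies $H\circ H=T_1$ and commutes with $T_1$, hence $H=T_c$ with $2c=1$, not merely $2c\in 1+\Z$; so there is nothing to normalise---any holomorphic square root is automatically tangent to the identity, and your conclusion stands for a simpler reason. In (b), your worry about the $\eps$-dependence of $\tilde f^{*}_\eps=\sigma\circ\tilde f_\eps\circ\sigma$ is harmless: you only need the comparison on the real parameter slice, where real-analytic dependence is all that is required, and the uniqueness of the antiholomorphic square root (nontrivial modulus) in Theorem~\ref{thm:sqr} already forces $\tilde f^{*}_\eps=\tilde f_\eps$ there by the argument through Proposition~\ref{prop:symmetries}. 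The trivial-modulus case you dispatch directly, as does the paper.
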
 
\begin{proof} By Theorem~\ref{thm:sqr}, $g_\eps$ has a square root $f_\eps$, and then the result  follows from Theorem~\ref{thm:inv_curve}.\end{proof}

\subsection{Application to holomorphic quadratic germs}

\begin{theorem} The holomorphic quadratic parabolic germ $g(z) = z+z^2$ has no antiholomorphic square root, nor any of the $g_\eps(z) = z+z^2-\eps$ for small $\eps$. 
\end{theorem}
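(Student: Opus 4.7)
The plan is to argue by contradiction: if $g_\eps(z)=z+z^2-\eps$ admitted an antiholomorphic square root for small $\eps$, then Theorem~\ref{thm:sqr} together with Corollary~\ref{cor:square} would force $g_\eps$ to possess a holomorphic square root tangent to the identity, contradicting the classical fact that the parabolic germ $z+z^2$ has no such square root.

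First I would observe that $g_\eps$ is a generic codimension-one ($k=1$) unfolding already in the prepared form~\eqref{hol_parabolic_unf}, with $P_\eps(z)=z^2-\eps$ and $M_\eps\equiv N_\eps\equiv 0$. Assume for contradiction that $f_\eps$ is an antiholomorphic square root, so $f_{\epsbar}\circ f_\eps=g_\eps$. Since $k=1$ the only choice of rotation is $m=0$, giving $\xi_0(\ell)=-\ell$, and Theorem~\ref{thm:sqr} produces a representative of the modulus of $g_\eps$ satisfying
\[
\STt\circ\Psi_{\ell,\eps,s}=\Psi_{-\ell,\epsbar,\ov{s}}\circ\STt,\qquad \ell=\pm 1.
\]
Because $g_\eps$ has real coefficients, for real $\eps$ it preserves the real axis, supplying a germ of invariant real analytic curve depending real analytically on $\eps$. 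Corollary~\ref{cor:square} then produces a holomorphic square root $h_\eps$ of $g_\eps$ tangent to the identity; specializing at $\eps=0$ yields a holomorphic germ $h$ with $h\circ h=z+z^2$ and $h'(0)=1$.

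The hard part, and the true heart of the statement, is to rule out such an $h$. Since $h$ commutes with $g_0=z+z^2$, the $\eps=0$ case of Proposition~\ref{prop:symmetries} leaves two options. Either $g_0$ is analytically conjugate to its formal normal form $v_0^1$, contradicting the classical \'Ecalle--Voronin theorem~\cite{E,V} which asserts that $z+z^2$ has non-trivial analytic modulus; or else $h=g_0^{\circ p/q}$ for integers $p,q$ with $T_{1/q}$ lying in the commutant of the transition maps $\Psi_{\pm 1}$ of $g_0$, in which case the relation $h\circ h=g_0$ forces $2p/q=1$, i.e.\ $q=2p$, so that $T_{1/2}$ itself would commute with $\Psi_{\pm 1}$. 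This second alternative also contradicts the classical fact that the \'Ecalle--Voronin modulus of $z+z^2$ has trivial translation stabilizer, equivalently, that the holomorphic commutant of $z+z^2$ at $0$ is reduced to its iterates. Both alternatives therefore fail, completing the proof.
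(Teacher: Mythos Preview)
Your reduction is correct and follows the paper exactly: via Theorem~\ref{thm:sqr} and Corollary~\ref{cor:square}, the existence of an antiholomorphic square root together with the invariance of the real axis would force $g_0=z+z^2$ to admit a holomorphic square root tangent to the identity, and one only has to rule this out.

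The gap is in your treatment of ``the hard part.'' You invoke two facts as classical: (i) that the \'Ecalle--Voronin modulus of $z+z^2$ is non-trivial, and (ii) that its translation stabilizer is trivial, i.e.\ the analytic commutant of $z+z^2$ consists only of the integer iterates. Neither of these is the \'Ecalle--Voronin \emph{classification} theorem; that theorem constructs the modulus but says nothing about its value for any particular germ. Assertion~(ii) is in fact \emph{equivalent} to the statement that $z+z^2$ has no analytic fractional iterate, which in particular contains the statement ``$z+z^2$ has no holomorphic square root'' that you are trying to prove. So your argument is circular at this point: you have restated the conclusion rather than established it. No standard reference gives~(ii) for $z+z^2$ as a black box.

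The paper supplies a genuine proof of this step by a global counting argument. A local holomorphic square root $g_1$ becomes $T_{1/2}$ in Fatou coordinates and therefore extends along the Fatou coordinates up to the Julia set; it must then permute the periodic orbits of each period $n$, and so does $\sigma$ (since $g$ has real coefficients). Hence, away from degenerate symmetric configurations, orbits of a given period come in packets of four. But for $n$ prime the equation $g^{\circ n}(z)=z$ has $2^n$ solutions including a double root at $0$, leaving $2^n-2\equiv 2\pmod 4$ genuine period-$n$ points, which is incompatible with the fourfold packaging. This is the missing ingredient you need to replace the appeal to ``classical facts''; once it is in place, continuity of the transition maps in $\eps$ propagates the conclusion to small $\eps$, exactly as you indicate.
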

\begin{proof} By Theorem~\ref{thm:sqr} and Corollary~\ref{cor:square}, since the real axis is invariant, it suffices to prove that $g$ has no holomorphic square root. 
Suppose that $g$ has a local holomorphic square root $g_1$. In the Fatou coordinates, this square root becomes $T_{\frac12}$. Hence the square root can be extended (not necessarily as a univalent map) in all the domains of extensions of the Fatou coordinates of $g$, i.e. up to the Julia set, which is the closure of the set of repelling periodic points. 
Then, for each periodic orbit  $\{z_1, \dots, z_n\}$ of period $n$ of $g$, $\{g_1(z_1), \dots, g_1(z_n)\}$ is also a periodic orbit of $g$ of period $n$. 
Moreover,  $\{\ov{z_1}, \dots ,\ov{z_n}\}$ and $\{\ov{g_1(z_1)}, \dots, \ov{g_1(z_n)}\}$ are also periodic orbits since $\sigma\circ g=g\circ \sigma$. Hence, generically, except for a few symmetric cases, the number of orbits of a given period $n$ should be a multiple of $4$. 
Let us show that this number is never a multiple of $4$ when $n$ is a prime number. Indeed, periodic points of period $n$ are solutions of $g^{\circ n}(z) - z=0$. This equation has $2^n$ solutions including a double root at $z=0$, hence $2^n-2$ periodic points of period $n$. But $2^n-2 \equiv 2 \;({\rm mod}\: 4)$. 

If follows that the transition maps of $g$ are not periodic of period $\frac12$. Since the transition maps of $g_\eps$ depend continuously on $\eps$, they do not satisfy \eqref{cond_square-root}, which is a necessary condition for $g_\eps$ to have a holomorphic square root. \end{proof}

\section{The multicorn families}

It is shown in \cite{HS} that all parabolic points of the multicorn family $f_c(z)=\ov{z}^d +c$ have multiplicity $1$ or $2$. We give a second proof and add that the family is a generic unfolding around these points. 

\begin{proposition} For $d\geq 2$, the multicorn family $f_c(z)=\ov{z}^d +c$ has $d+1$ values of $c$ given by $c_\tau= (d+1)d^{-\frac{d}{d-1}}e^{i\frac{\pi}{d+1}}\tau$, where $\tau^{d+1}=1$, for which the point $z_\tau= d^{-\frac1{d-1}}e^{i\frac{\pi}{d+1}}\tau$ is an antiholomorphic parabolic point of codimension 2. The family is generic around these points when considering the real and imaginary parts of $c$ as parameters. 
The parabolic fixed points occurring for other parameter values of $c$ have codimension $1$ and the $2$-parameter family contains a generic unfolding around these points. 
\end{proposition}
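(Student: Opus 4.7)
My plan is to compute $g_c = f_c \circ f_c$ together with its derivatives at fixed points of $f_c$, use these to locate all parabolic points and identify the codimension-$2$ ones, and then reduce to a single base point by symmetry in order to check genericity of the antiholomorphic unfolding there.

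First I write $g_c(z) = (z^d + \bar c)^d + c$ and note that at a fixed point $z_0$ of $f_c$ one has $z_0^d + \bar c = \bar z_0$, so $g_c'(z_0) = d^2 z_0^{d-1}\bar z_0^{d-1}=d^2 |z_0|^{2(d-1)}$. The parabolic condition $g_c'(z_0) = 1$ is then equivalent to $|z_0| = r_0 := d^{-1/(d-1)}$. A second differentiation using the fixed-point relation gives $g_c''(z_0) = d^2(d-1) z_0^{d-2} \bar z_0^{d-2}(\bar z_0 + dz_0^d)$, so codimension $\ge 2$ is equivalent to $\bar z_0 + dz_0^d = 0$. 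Writing $z_0 = r_0 e^{i\theta}$ and using $dr_0^d = r_0$, this becomes $e^{i(d+1)\theta}=-1$, giving exactly the $d+1$ angles $\theta = (2j+1)\pi/(d+1)$, $j=0,\ldots,d$; substituting back into $c = z_0 - \bar z_0^d$ together with $e^{-id\theta} = -e^{i\theta}$ at these angles yields precisely the stated $c_\tau$. For exact codimension $2$ I continue to $g_c'''(z_0)$, substitute $\bar z_0 = -dz_0^d$ to reduce the three resulting terms to a common factor $z_0^{d^2-3}(-d)^{d-3}$, and find that the bracket of binomial constants collapses to $-(d+1)d^2$, yielding $g_c'''(z_0) = -d^4(d-1)(d+1)(-d)^{d-3} z_0^{d^2-3} \neq 0$.

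For genericity at the codim-$2$ points I exploit the rotational equivariance $f_c(\rho z) = \rho f_{c/\rho}(z)$ valid for $\rho^{d+1} = 1$ to reduce all $d+1$ points to a single one, and then perform a further linear conjugation to bring the base point to $(r_0,c_0'') \in \R_{>0}^2$ for the conjugate family $\tilde f_{c''}(z) = -\bar z^d + c''$; for real $c''$ this family preserves the real axis, which furnishes the symmetry axis required by the preparation of Theorem~\ref{thm_prepared}. Expanding $\tilde g_{c''}(z) - z$ around the base point to the lowest orders entering the Weierstrass cubic gives
\begin{equation*}
(\delta c'' - \delta \bar{c''}) + A\,\delta z\,\delta\bar{c''} + \tilde B(\delta\bar{c''})^2 + C(\delta z)^3 + \cdots,
\end{equation*}
with $A,\tilde B,C$ explicit nonzero real constants. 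Identifying this cubic with the canonical form $w^3 + \eps_1 w + \eps_0$ of the prepared family, one would compute the $2 \times 2$ real Jacobian of $(\Re(c''),\Im(c'')) \mapsto (\eps_0,\eps_1)$ at the base point and check that it is nonsingular. For codim-$1$ parabolic points one has $g_c''(z_0) \neq 0$, and the same expansion restricted to any $1$-real-parameter slice of $c$ transverse to the codim-$1$ locus directly exhibits a generic $1$-parameter antiholomorphic unfolding.

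The main obstacle I anticipate is precisely the genericity check at the codim-$2$ point: the preparation of Theorem~\ref{thm_prepared} delivers \emph{real} canonical parameters $\eps$ from real original parameters, but the multicorn's parameter $c$ is intrinsically complex and the naive Weierstrass coefficients of $g_c(z)-z$ are complex in general. Matching the two real components of $c$ with the two real components of $\eps$ requires carefully tracking the mix-analytic change of coordinates and parameters produced by the preparation, rather than just the holomorphic Weierstrass data, and this is the most delicate piece of the argument.
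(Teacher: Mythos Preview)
Your location of the parabolic points and the codimension computations are correct and take a genuinely different route from the paper: you work with the holomorphic second iterate $g_c$ and its Taylor jet at the fixed point (computing $g_c',g_c'',g_c'''$ and using $\bar z_0+dz_0^d=0$), whereas the paper localizes $f_c$ itself, rotates by $e^{ik\theta/2}$ to reach the form $\bar Z_1+a_2\bar Z_1^2+a_3\bar Z_1^3+\cdots$, reads off codimension $\ge 2$ from $a_2\in i\R$, and checks exact codimension $2$ via $a_3-a_2^2>0$ after killing $a_2$. Your route is pleasantly coordinate-free; the paper's route has the advantage that it already places everything in the normal form \eqref{gen_unfolding} used throughout.

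That advantage becomes decisive at the genericity step, and this is where your argument has a real gap. You propose to extract the canonical real parameters $(\eps_0,\eps_1)$ from the Weierstrass cubic of $\tilde g_{c''}(z)-z$ and then check the Jacobian $(\Re c'',\Im c'')\mapsto(\eps_0,\eps_1)$. But, as you yourself note, the Weierstrass coefficients of $\tilde g_{c''}(z)-z$ are not real (your own leading constant term is $\delta c''-\delta\bar c''\in i\R$), and the passage from these complex data to the real canonical parameters of Theorem~\ref{thm_prepared} requires tracking the entire preparation (the translation to $y=h(x,\eta)$, the real centering, and the Kostov change). You have not done this; ``one would compute'' is not a proof, and the obstacle you flag is genuine on this route.

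The paper sidesteps all of this by using the \emph{definition} of genericity directly: after two elementary changes of variable on $f_c$, one lands at
\[
F_{2,c}(Z_2)=\eta_0+(1+\eta_1)\bar Z_2+O(\eta)\bar Z_2^2+\bigl(a_3-a_2^2+O(\eta)\bigr)\bar Z_2^3+O(\bar Z_2^4),
\]
with $\eta_0=e^{ik\theta/2}\eps+o(\eps)$ and $\eta_1=-a_2\eta_0+o(\eta_0)$; since $a_2\in i\R$, the real $2\times 2$ Jacobian of $(\Re\eps,\Im\eps)\mapsto(\Re\eta_0,\Re\eta_1)$ is visibly nonsingular. This is a two-line check once one is in the form \eqref{gen_unfolding}, and it handles the codimension~$1$ case by the same formulas. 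I suggest you replace your Weierstrass/preparation plan for genericity by this direct verification on the antiholomorphic side.
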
 
\begin{proof} We let $d=k+1$ to use the same notations as in the rest of the paper. A parabolic point of codimension greater than 1 is one for which, under the form $f_1(z_1)= \ov{z}_1 +a_2\ov{z}_1^2 + O(\ov{z_1}^3)$, then $a_2\in i\R$. 
We look for a parabolic point $z_0$, i.e. a fixed point satisfying  $f_{c_0}(z_0)=\ov{z}_0^{k+1} +c_0 =z_0$ and $|f_{c_0}'(z_0)|=|(k+1)\ov{z}_0^k|=1$ for some $c_0\in C$. 
Then $z_0= (k+1)^{-\frac1k} e^{i\theta}$ for some $\theta \in [0,2\pi]$, from which $c_0= z_0- \ov{z}_0^{k+1}$. We localize at $z_0$ by the change of variable $Z=z-z_0$. In the new variable the function becomes 
$$F_{c_0}(Z)= e^{-ik\theta} \ov{Z} + \frac{k(k+1)^{\frac2{k+1}}}2e^{-i(k-1)\theta}\ov{Z}^2+
\frac{k(k-1)(k+1)^{\frac3{k+1}}}6e^{-i(k-2)\theta}\ov{Z}^3+O(\ov{Z}^4).$$
We let $Z_1= e^{i\frac{k\theta}2}Z$. This transforms $F_{c_0}$ into 
\begin{align*}\begin{split}F_{1, c_0}(Z_1)&= \ov{Z}_1 + \frac{k(k+1)^{\frac2{k+1}}}2e^{i\frac{k+2}2\theta}\ov{Z}_1^2\\ &\quad+
\frac{k(k-1)(k+1)^{\frac3{k+1}}}6e^{i(k+2)\theta}\ov{Z}_1^3+O(\ov{Z}_1^4).\end{split}\label{multicorn_cod-2}\end{align*}
Then $Z_1=0$ has codimension $1$ if $e^{i\frac{k+2}2\theta}\notin i \R$ (see for instance \cite{GR21}), and at least $2$ if $e^{i\frac{k+2}2\theta}\in i \R$, i.e. $\theta=\frac{\pi}{k+2}+\frac{2m\pi}{k+2}$ for $m\in\Z_{k+2}$. 
In the latter case, $\ov{z}_0^{k+1}$ is opposite to $z_0$ and $c_0=z_0-\ov{z}_0^{k+1}= (k+2)(k+1)^{-\frac{k+1}k}e^{i\frac{\pi}{k+2}}\tau$ for some $\tau$ satisfying $\tau^{k+2}=1$. 

Note that $F_{1,c_0}(Z_1)= \ov{Z}_1+a_2\ov{Z}_1^2+a_3\ov{Z}_1^3+O(\ov{Z}_1^4)$, with $a_2\in i\R$ and $a_3\in\R_{\leq0}$. In order to check that the codimension is exactly 2 we get rid of the coefficient in $\ov{Z}_1^2$  by means of the change of coordinate $Z_1=Z_2+ \frac{a_2}2Z_2^2$. This transforms $F_{1,c_0}$ into $F_{2,c_0}(Z_2)=\ov{Z}_2+(a_3-a_2^2) \ov{Z}_2^3+O\left(\ov{Z}_2^4\right)$. 
Then $$a_3-a_2^2=\frac{k(k+1)^{\frac3{k+1}}}{12}\left(3k(k+1)^{\frac1{k+1}}-2(k-1)\right)>0.$$ 

We now consider the family in the neighborhood of $c_0$, by taking  $c= c_0+\eps$. Then $F_c(Z)= F_{c_0}(Z) +\eps$, and $F_{1,c}(Z_1)= F_{1,c_0}+\eps e^{i\frac{k\theta}2}$. 
Finally the change  $Z_1=Z_2+ \frac{a_2}2Z_2^2$ brings it to $$F_{2,c}(Z_2)= \eta_0+(1+\eta_1) \ov{Z}_2+ O(\eta)\ov{Z}_2^2+\left(a_3-a_2^2+O(\eta)\right)\ov{Z}_2^3+O\left(\ov{Z}_2^4\right),$$ where $\eta_0=e^{i\frac{k\theta}2} \eps +o(\eps)$ and $\eta_1=-a_2\eta_0+o(\eta_0)$. A further scaling $Z_2\mapsto rZ_2$ for some $r\in\R_{>0}$ would change $F_{2,\eps}$ exactly to the form \eqref{gen_unfolding}. The change of parameter $(\Re(\eps),\Im(\eps)) \mapsto (\Re(\eta_0),\Re(\eta_1))$ is invertible, since $a_2\in i\R$, from which the genericity of the family follows. 

In the codimension $1$ case the corresponding change $Z_1=Z_2+ i\frac{\Im (a_2)}2Z_2^2$ brings the family to the form
$$F_{2,c}(Z_2)= \eta_0+(\Re (a_2)+\eta_1) \ov{Z}_2+ O(\eta)\ov{Z}_2^2+\left(a_3-i a_2\Im( a_2)+O(\eta)\right)\ov{Z}_2^3+O\left(\ov{Z}_2^4\right),$$
which is a $2$-parameter unfolding containing a generic unfolding. \end{proof}

\section*{Acknowledgements}

The author is greatful to Arnaud Ch\'eritat, Jonathan Godin and Martin Klime\v{s} for stimulating discussions.

\end{document}